\documentclass[a4paper,11pt,reqno]{article}  
\usepackage{graphicx,color,multirow, here}
\usepackage{mathtools}
\usepackage[utf8]{inputenc} 
\usepackage[T1]{fontenc}
\usepackage[english]{babel}
\usepackage{amsmath, amsfonts, amsthm,amssymb,here,dsfont, hyperref}
\usepackage{natbib}
\usepackage{comment, enumitem}

\usepackage{caption}
\usepackage{subcaption}

\usepackage{makecell}

\usepackage{tikz}
\usetikzlibrary{arrows.meta, positioning, fit, calc}
\allowdisplaybreaks

\newcommand{\argmax}[1]{\underset{#1}{\operatorname{arg}\!\operatorname{max}}\;}
\newcommand{\argmin}[1]{\underset{#1}{\operatorname{arg}\!\operatorname{min}}\;}

\newtheorem{theo}{Theorem}[section]

\newtheorem{proposition}[theo]{Proposition}
\newtheorem{corollary}[theo]{Corollary}
\newtheorem{lemma}[theo]{Lemma}
\newtheorem{remark}[theo]{Remark}

\newtheorem{ass}{Assumption}

\newcommand{\R}{\mathbb{R}}
\newcommand{\w}{\widehat}

\newcommand{\wt}{\widetilde}
\newcommand{\wtX}{\widetilde{X}}
\newcommand{\one}{\mathds{1}}
\newcommand{\E}{\mathbb{E}}
\renewcommand{\P}{\mathbb{P}}

\newcommand{\cF}{\mathcal{F}}

\newcommand{\cN}{\mathcal{N}}
\newcommand{\cR}{\mathcal{R}}
\newcommand{\cY}{\mathcal{Y}}

\newcommand{\dd}{\mathrm{d}}

\newcommand{\cG}{\mathcal{G}}
\newcommand{\Nt}{N_{\rm test}}

\makeatletter

\@addtoreset{equation}{section}
\makeatother

\usepackage[textwidth=1.8cm, textsize=tiny]{todonotes}

\title{Who Drives the System? Classifying Mean-Field Particle Systems from Trajectory Data}

\author{Christophe Denis \thanks{SAMM, Université Paris 1}, Charlotte Dion-Blanc \thanks{LPSM, Sorbonne Université, Université Paris Cité, CNRS, F-75005 Paris, France.} \thanks{CMAP, Ecole Polytechnique, IP Paris, Palaiseau, France}, Yating Liu \thanks{CEREMADE, Universit\'e Paris Dauphine}}

\date{\today}

\begin{document}

\maketitle

\begin{abstract}
We study the supervised classification problem for interacting particle systems (IPS) from trajectory observations. 
The IPS belongs to one of $K$ classes, 
each characterized by a distinct interaction drift. Given a learning dataset, the goal is then to predict the class label of a newly observed system based on the trajectory of a single particle.
This setting raises several statistical challenges. First, the particles within each system interact and are therefore dependent. Second, although the full particle system is Markovian, the trajectory of a single particle is not. To address these difficulties, we exploit the McKean--Vlasov limit of the IPS, which describes the dynamics of a typical particle as the number of particles tends to infinity. 

We propose a plug-in classification procedure based on estimating the interaction drift associated with each class from discretely observed trajectory data. Hence, for each class, we provide a new nonparametric estimator of the drifts by minimizing a ridge-regularized least-squares contrast over a B-spline basis. 
In particular, our theoretical findings reveal that the convergence rate of the resulting classifier is of order $N^{-1/6+\varepsilon}$ for any $\varepsilon>0$, where $N$ denotes the number of particles in each system. Numerical experiments illustrate the performance of the proposed method and show that it outperforms an end-to-end neural network baseline that does not exploit the underlying particle-system structure.
\end{abstract}

\textbf{Keywords:} Supervised classification; McKean-Vlasov SDEs; Nonparametric estimator; Propagation of chaos

\textbf{MSC Classification:}   62M20, 62H30, 62G05, 60H10

\section{Introduction}\label{sec:intro}

Interacting particle systems (IPS) provide a natural framework for modeling large populations of agents whose dynamics depend both on their individual state and on interactions with the rest of the system. Such models arise in a wide range of fields, including statistical physics \cite{HuangLiuPickl2020,ChaintronDiez2022,bertoli2024stability}, biology and neuroscience \cite{erny2021conditional,locherbach2022metastability, carrillo2022noise}, financial mathematics \cite{Cardaliaguet2018Mean, Carmona-Delarue-I}, and more recently machine learning \cite{transformers}. In many applications, several types of interacting systems may coexist, each characterized by different interaction mechanisms. A natural statistical problem is therefore to identify the type of a system from the observation of its dynamics. Moreover, the need for a reliable classification algorithm for paths generated by interacting particle systems (IPS) is also motivated by the assessment of generative models,  as in the recent work of \cite{pham} where the authors use a classifier to determine whether generated data can be distinguished from real data. Another example where supervised classification could be needed, is for neurons. Indeed, we know that a population of neurons can change its dynamic regime depending on the cognitive
task or on a pathological condition. Then, it could be crucial to be able to know the state of the organism looking only at one or a small group of neurons.

In this work, we fill a gap in the literature by studying the supervised classification problem for interacting particle systems observed discretely over a fixed time interval. More precisely, we assume that the system belongs to one of $K$ possible classes, each class being characterized by a different drift function. For each class, we observe two particle systems of size $N$ at discrete time points. The objective is to classify a new system based on the observation of a single particle trajectory. Our approach relies on a plug-in principle: we first estimate the  drift function associated with each class and then construct a classifier based on these estimators, mimicking the oracle one. The main challenge is that the classifier must be learned from dependent data, since particles within each training system interact and are therefore not independent. As a result, classical learning procedures designed for independent samples cannot be applied directly.

\paragraph{Model.}
We consider a system of $N$ interacting particles $(X_t^{[Y],n})_{t\in[0,1],\,1\le n\le N}$. Each particle's position evolves according to the  
simple general model which is the result of pairwise interactions
with all remaining particles. Its dynamic is described by the following equation, for $1\le n\le N $,
\begin{equation}\label{eq:sys}
\dd X_t^{[Y],n}
=
\frac{1}{N}\sum_{n'=1}^N 
b_Y^{*}\!\left(X_t^{[Y],n},X_t^{[Y],n'}\right)\dd t
+ \dd W_t^{n},
\qquad X_0^{[Y],n}=x_0 \,\in\R,
\end{equation}
where $(W^n)_{n\ge1}$ are independent Brownian motions and 
$Y\in\{1,\ldots,K\}$ denotes the class label of the system. 
Conditionally on $Y=k$, the interaction among particles is described by the function $b_k^{*}$. 

Such systems are exchangeable and, as the number of particles $N$ grows, their behavior is well approximated by the solution of a McKean–Vlasov equation 
\begin{equation}\label{eq:mkv}
\dd \xi_t^{[Y]}
=\left(\int_{\R}
b_Y^{*}\big(\xi_t^{[Y]},x\big)\mu_t^{[Y]}(\dd x)\right)\dd t
+ \dd W_t, \quad \xi_0^{[Y]}=x_0,
\end{equation}
where for every $t\in[0,1]$ and for each label $Y=k, \,k\in\mathcal{Y}$, $\mu_t^{[k]}=\text{Law}(\xi_t^{[k]})$ denotes the probability distribution of $\xi_t^{[k]}$, and $W=(W_t)_{t\in[0,1]}$ is an $(\mathcal{F}_t)$-standard Brownian motion, independent of the label $Y$.

The approximation of the dynamics of a typical particle in \eqref{eq:sys} by the McKean--Vlasov equation \eqref{eq:mkv} is justified by the propagation of chaos property; see, \emph{e.g.}, \cite{mckean1966class,meleard2006asymptotic,jourdainmalrieu}. The statistical objective of this paper is to predict the label $Y$ from the observation of a single particle trajectory generated by the interacting system \eqref{eq:sys}, without requiring access to
the full particle system. To construct the classifier, we therefore rely on the McKean--Vlasov limit \eqref{eq:mkv} as an approximate description of the dynamics of a typical particle.

\paragraph{Related literature.}
Multiclass classification is a well-established research area, with supervised classification of temporal data attracting considerable attention in recent years, on can cite \cite{ramsaySilverman, wang2016functional, NEURIPS2022_times_series}. Among the most closely related works to ours are those that study supervised classification within the framework of stochastic differential equations (SDEs). Several contributions have addressed this problem, including~\cite{cadre2013, DDM2019, gadat2020, DDMVC2022, zhao2026plug}. In these works, the goal is to construct empirical classifiers that discriminate between classes based on the drift of a diffusion process. The training sample is typically assumed to consist of $N$ independent realizations of time-homogeneous diffusion processes. Several types of classification procedures have been investigated. Empirical risk minimization procedures are considered in~\cite{cadre2013, DDM2019}, while plug-in classifiers are studied in~\cite{DDMVC2022} for univariate diffusion processes and in~\cite{zhao2026plug} for the multivariate setting. In particular, rates of convergence are established under regularity assumptions on the drift functions, such as Lipschitz continuity.

However, these approaches cannot be directly applied to our setting, since the particles of an interacting particle system (IPS) are not independent. In particular, the strong dependencies among particles introduce significant statistical challenges. Moreover, although the full particle system, viewed as an $\mathbb{R}^N$-valued diffusion process, is Markovian, the trajectory of an individual particle is generally not. Consequently, recent statistical methods developed for Markov processes, such as those proposed in~\cite{flamary}, cannot be directly applied to our setting.

Our proposed classification procedure relies on nonparametric estimators of the drift functions. 
In recent years, there is  a rich literature  on statistical inference for interacting particle systems and McKean–Vlasov equations and a significant part of this literature focuses on the estimation of the drift function. Parametric and nonparametric approaches have been developed under various observation schemes, including discrete-time observations and long time horizons; see for instance \cite{AMORINOMKV,CGCL2025,amorino2024polynomial}. Kernel-based estimators of the full drift function have been proposed in \cite{hoffmann2021}, where the number of particles tends to infinity while the observation time remains fixed. Other works investigate least-squares approaches for particular interaction structures \cite{belopodol} or learning strategies based on neural networks \cite{beloDeep}. These contributions mainly address estimation problems and often rely on either ergodicity of the dynamics or large-sample asymptotics.

\paragraph{Our contribution.}
In this work, we propose a classification procedure $\w{g}$ for particles originating from an interacting particle system (IPS), assuming that two independent training samples from each class are available. We consider the setting in which the particles of the IPS are observed at discrete time points. The performance of a classifier is evaluated through its misclassification risk, ${\mathcal{R}}(\w{g}) = \mathbb{P}\left(\w{g}(Z^{[Y]}) \neq Y\right)$, 
where $Z^{[Y]}$ denotes the observed trajectory of a particle from an IPS associated with class $Y$, whose dynamics are given by \eqref{eq:sys}.

Exploiting the propagation of chaos property, we rely on the associated McKean--Vlasov limit equation~\eqref{eq:mkv} to obtain an explicit characterization of the Bayes classifier $g^*$. This characterization reveals the relationship between the coefficients of the McKean--Vlasov equation and the optimal classification rule. Motivated by this result, we propose a plug-in classifier based on ridge-type estimators of drift functions $b_k^*$. These estimators are obtained by minimizing an $L_2$ contrast over a constrained bivariate $B$-spline space.

Compared with empirical risk minimization, plug-in methods provide a more explicit characterization of the mechanisms underlying the differences between classes through the estimation of their drift functions. Moreover, by estimating the class-conditional distributions, they provide a direct probabilistic interpretation of the resulting posterior class probabilities, which can be further used for uncertainty quantification, decision-making under different loss functions, or calibration.
However, the theoretical analysis of plug-in classifiers in this setting raises several challenges. These include the dependence among particles, the structure of the interaction function, and the interplay between the sizes of the particle systems used for training and prediction and the time-discretization step $\Delta$. In addition, it requires deriving convergence rates for the drift functions over a non-compact interval, which leads to additional technical difficulties. To assess the performance of a classifier applied to a particle from an IPS, we introduce the notion of mean-field consistency, which requires the risk of $\w{g}$ to converge to the Bayes risk as the size $\Nt$ of the particle system containing the test particle tends to infinity.

To the best of our knowledge, this is the first work to study the problem of supervised classification for particles originating from interacting particle systems, and to provide theoretical guarantees for a classifier that explicitly exploits the underlying particle-system structure.
Our main contributions are summarized as follows:

\begin{enumerate}
\item[$i)$] We derive a general upper bound on the excess risk of the plug-in classifier, explicitly characterizing its dependence on the training system size $N$, the prediction system size $\Nt$, the time-discretization step $\Delta$, and the estimation error of the drift functions.

\item[$ii)$] We establish a convergence rate of order $N^{-1/6}$ for the estimation error of the bivariate Lipschitz drift functions as the size $N$ of the training particle systems increases.

\item[$iii)$] We show that the excess risk of the plug-in classifier based on the ridge estimators of the drift functions converges at the rate
$\max\left(\Nt^{-1}, N^{-1/6+\varepsilon}\right)$,
for any $\varepsilon > 0$.
\end{enumerate}

Finally, we complement our theoretical results with numerical experiments that assess the performance of the proposed classifier and compare it with an end-to-end neural network classifier that does not explicitly exploit the structure of the underlying particle system.

\paragraph{Organization of the paper.}
Section~\ref{sec:maths} introduces the interacting particle system and its main properties. Section~\ref{sec:classif} presents the plug-in classification strategy that relies on preliminary estimators of the drift functions. Then, Section~\ref{sec:inference} studies the estimation of the drift function and derives the corresponding convergence rates. In Section~\ref{sec:classiffinal} we obtain the convergence rate for the resulting classifier. Finally, Section~\ref{sec:num} provides numerical experiments that illustrate the effectiveness of the proposed method.

\section{Mathematical model}\label{sec:maths}

In this section, we introduce the model under consideration and formulate the main assumptions underlying our analysis. Section~\ref{subsec:ModelAssump} presents the model and its assumptions, whereas Section~\ref{subsec:PropChaos} examines the propagation of chaos property.

\subsection{Model and assumptions}
\label{subsec:ModelAssump}

We place ourselves in a filtered probability space $(\Omega, \mathcal{F}, (\mathcal{F}_t)_{t\in[0, 1]}, \P)$ that satisfies the usual conditions with a fixed time horizon $T=1$. Let $\mathcal{P}(\R)$ denote the set of the probability measures on $\R$ and let $\mathcal{P}_p(\R)$ denote the set of the probability distribution on $\R$ with $p$-th finite moment, $p\geq1$. 
The model considered in this paper is the interacting particle system
$
(X^{[Y],1},\ldots,X^{[Y],N})
=
(X_t^{[Y],1},\ldots,X_t^{[Y],N})_{t\in[0,1]},
$
defined by Equation~\eqref{eq:sys}, where \(Y\) is uniformly distributed on \(\mathcal{Y}:=\{1,\ldots,K\}\). For each \(k\in\mathcal{Y}\), the functions \(b_k^*:\R^2\to\R\) is measurable, and \(W^1,\ldots,W^N\) are independent standard Brownian motions, independent of \(Y\). Hence, the probability measure $\P$ can be decomposed by $\P=\frac{1}{K}\sum_{k=1}^{K} \P_k$ with $\P_k\coloneqq \P (\,\cdot\, | Y=k)$.
For every $(x,\mu)\in\R\times \mathcal{P}(\R)$, we write $b_k^*[x, \mu]:= \int_\R b_k^*(x,z)\mu(\dd z),\, k\in\mathcal{Y}$.\footnote{For simplicity, we retain the notation $b_k^*$ but distinguish the inputs using square brackets and round brackets: we write $b_k^*[\,\cdot\,,\,\cdot\,]$ when the input is $(x, \mu)\in\R\times {\mathcal{P}(\R)}$ and write $b_k^*(\,\cdot\,,\,\cdot\,)$ when the input consists of two values $x,y\in\R$.}
We define the stochastic process $\xi^{[Y]}$ as the solution to the McKean-Vlasov SDE \eqref{eq:mkv}, which describes the dynamics of a representative particle in the mean-field limit.

\begin{ass}\label{ass:bsup_et_lip_in_x_and_y}
The drift functions $(x,y)\mapsto b_k^*(x,y), \,k\in\mathcal{Y}$ are bounded
Lipschitz continuous, which means, there exists a constant $L>0$ such that for every $ x_1, x_2, y_1, y_2\in \R$ and for every $k\in\mathcal{Y}$,
\begin{equation}\label{eq:lip_in_x_in_b-ne}
\big|b_k^*( x_1, y_1 )-b_k^*( x_2,y_2)\big|\leq L\big[ |x_1-x_2|+|y_1-y_2|\big].
\end{equation}
We denote 
$$b^*_{\rm \max}:= \max_{k \in \cY}\|b_k^*\|_\infty.$$
\end{ass}

Assumption~\ref{ass:bsup_et_lip_in_x_and_y} guarantees the existence and uniqueness of a strong solution to the particle system \eqref{eq:sys}, see e.g. \cite[Lemma 3.2]{lacker2018mean}. 
Moreover, let us recall the definition of $p$-Wasserstein distance $\mathcal{W}_p, \,p\geq1$, see e.g. Equation (5.4) \cite{Carmona-Delarue-I},
\begin{equation}\label{eq:def-wasserstein}
\forall\,\mu, \nu\in\mathcal{P}_p(\R),\quad \mathcal{W}_p(\mu, \nu) := \inf_{\pi \in \Pi(\mu, \nu)} \left(\int_{\R \times \R} |x-y|^p \dd\pi(x,y)\right)^{\frac{1}{p}}, 
\end{equation}
where $\Pi(\mu, \nu)$ in \eqref{eq:def-wasserstein} is the set of probability measures on $\R \times \R$ with marginals $\mu, \nu$. Assumption \ref{ass:bsup_et_lip_in_x_and_y} also implies that the drift function $b_k^*[\cdot,\cdot]$ is Lipschitz continuous, which means, 
\begin{equation}\label{eq-lip-wasserstein}
\forall\,x,y\in\R,\,\forall\,\mu, \nu\in\mathcal{P}_1(\R),\quad \big|\,b_k^*[x,\mu]-b_k^*[y,\nu]\,\big|\leq L\big[\,|x-y|+\mathcal{W}_1(\mu, \nu)\,\big],
\end{equation}
by applying the Kantorovich duality of the Wasserstein distance, see  \cite[Corollary 5.4]{Carmona-Delarue-I}. The above inequality~\eqref{eq-lip-wasserstein} remains valid if we replace $\mathcal{W}_1$ with $\mathcal{W}_p$ for any $p\geq 1$, assuming $\mu, \nu\in\mathcal{P}_p(\R)$, since in this case we have $\mathcal{W}_1(\mu, \nu)\leq \mathcal{W}_p(\mu, \nu)$, see, e.g., Remark 6.6 in \cite{villani2008optimal}.
Consequently, Assumption \ref{ass:bsup_et_lip_in_x_and_y} guarantees the existence and strong uniqueness of the solution $\xi^{[k]}=(\xi^{[k]}_t)_{t\in[0,1]}$ to the McKean-Vlasov Equation~\eqref{eq:mkv} given $Y=k\in\mathcal{Y}$, see e.g.~\cite[Theorem 1.1]{sznitman1991topics}.

In the sequel, $\mathfrak{C}$ denotes a generic positive constant, whose value may change from line to line, and which depends only on the model parameters $K, L, x_0, b_{\max}^*.$ More generally, $C_{\lambda_1}, \cdots C_{\lambda_{p}}$ denotes a positive constant depending on the parameters $\lambda_1, \cdots, \lambda_p$ whose value
may also vary from line to line.

\subsection{Propagation of chaos}
\label{subsec:PropChaos}

We introduce some preliminary results of the McKean-Vlasov equation and the particle system. Let $\|X\|_p= \E[|X|^p]^{1/p}$ denote the $L^p$-norm of a random variable $X$. 

An important property of the McKean-Vlasov equation is known as "propagation of chaos".
We present here a version with respect to the Wasserstein distance and the total variation distance $ d_{\mathrm{TV}} $, adapted to the setting of this paper and provide its proof later in Section \ref{subsec:proof-sec-2}. The proof follows the same idea as in \cite[Theorem 3.3]{lacker2018mean}, \cite{sznitman1991topics} and in \cite[Theorem 2.14]{Lacker2023Hierarchies}. 

\begin{proposition}[Propagation of chaos property] \label{prop:chaos} 
Assume that Assumption~\ref{ass:bsup_et_lip_in_x_and_y} holds. Given \( Y = k \in \mathcal{Y} \), let \( \xi^{[k]} = (\xi_t^{[k]})_{t \in [0,1]} \) be the unique solution to the McKean--Vlasov Equation~\eqref{eq:mkv}, and let \( \mu^{[k]} \) denotes its law, with marginal distributions \( (\mu_t^{[k]})_{t \in [0,1]} \). Let \( (X_t^{[k],1}, \ldots, X_t^{[k],N})_{t \in [0,1]} \) be the particle system defined by~\eqref{eq:sys} and let $\mu_t^{[k], N}=\frac{1}{N}\sum_{n=1}^{N}\delta_{X_t^{[k],n}}$, $t\in[0,1]$. Then,
\begin{enumerate}[label=(\alph*)]
\item For every $p\geq 1$, there exists a constant $C>0$ depending on $p$ and $\mathfrak{C}$ such that 
\begin{equation}
\sup_{t\in[0,1]}\E \left[ \mathcal{W}_p^p\left(\mu_t^{[k], N}, \mu_t^{[k]} \right)\right]\leq C N^{-\frac{1}{2}}.\nonumber
\end{equation}
Moreover, if the Brownian motion \( W \) in \eqref{eq:mkv} is chosen to be the same as the first Brownian motion \( W^{1} \) in \eqref{eq:sys}, then 
\begin{equation}
\E\left[\sup_{t\in[0,1]}\left|  X_t^{[k],1}- \xi_t^{[k]}\right|\right] \leq \mathfrak{C} N^{-\frac{1}{2}}. \nonumber
\end{equation}
\item For a fixed $\ell\in\{1, ..., N\}$, we have
\begin{equation}\label{eq:chaos-TV}
d_{\mathrm{TV}}\big(\mathrm{Law}(X^{[k], 1}, ..., X^{[k], \ell}), (\mu^{[k]})^{\otimes \ell}\big)\leq \mathfrak{C} \frac{\ell}{N},
\end{equation}
where $d_{\rm TV}$ denotes the total variation distance defined  for two probability measures $\mu$ and $\nu$  on a measurable space $E$ by 
\begin{equation}\label{eq:total-variation-distance}
d_{\mathrm{TV}}(\mu, \nu) = \frac{1}{2}\sup \left\{ \int f \, \mathrm{d}\mu - \int f \, \mathrm{d}\nu \:\Bigg|\: f: E \to [-1,1] \:\mathrm{ measurable} \:\right\}.
\end{equation}
\end{enumerate}
\end{proposition}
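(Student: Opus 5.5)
For part (a) I would use the synchronous coupling of Sznitman. Consider $N$ i.i.d. copies $\xi^{[k],n}$ of the McKean--Vlasov solution \eqref{eq:mkv}, each driven by the same Brownian motion $W^n$ as $X^{[k],n}$ and started at $x_0$. Writing $\bar\mu^N_t=\frac1N\sum_n\delta_{\xi^{[k],n}_t}$, I would bound
\[
|X^{[k],n}_t-\xi^{[k],n}_t|\le \int_0^t \Big|\tfrac1N\sum_{n'}b_k^*(X^n_s,X^{n'}_s)-b_k^*[\xi^n_s,\mu_s^{[k]}]\Big|\dd s .
\]
I would split the integrand into $\frac1N\sum_{n'}[b_k^*(X^n_s,X^{n'}_s)-b_k^*(\xi^n_s,\xi^{n'}_s)]$, which by Assumption~\ref{ass:bsup_et_lip_in_x_and_y} is at most $L(|X^n_s-\xi^n_s|+\frac1N\sum_{n'}|X^{n'}_s-\xi^{n'}_s|)$, and the fluctuation term $\frac1N\sum_{n'}[b_k^*(\xi^n_s,\xi^{n'}_s)-b_k^*[\xi^n_s,\mu^{[k]}_s]]$. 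Conditionally on $\xi^n$, the fluctuation term is an average of centered bounded variables (up to the single term $n'=n$, which is $O(b^*_{\max}/N)$), so its $L^2$ norm is $O(N^{-1/2})$. Taking expectations of the supremum, using exchangeability, and applying Gronwall then gives $\E\sup_t|X^{[k],1}_t-\xi^{[k],1}_t|\le\mathfrak C N^{-1/2}$, which is the second claim.

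For the Wasserstein bound I would use the triangle inequality
\[
\mathcal W_p^p(\mu^{[k],N}_t,\mu^{[k]}_t)\le 2^{p-1}\big(\mathcal W_p^p(\mu^{[k],N}_t,\bar\mu^N_t)+\mathcal W_p^p(\bar\mu^N_t,\mu^{[k]}_t)\big).
\]
The first term is at most $\frac1N\sum_n|X^n_t-\xi^n_t|^p$. Because the drifts are bounded, both $X^n_t-x_0$ and $\xi^n_t-x_0$ equal a bounded drift term plus a Brownian term, so the difference $|X^n_t-\xi^n_t|\le 2b^*_{\max}$ is bounded. Hence $\E|X^n_t-\xi^n_t|^p\le (2b^*_{\max})^{p-1}\E|X^n_t-\xi^n_t|\le C N^{-1/2}$. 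For the second term I would invoke the one-dimensional empirical measure rates of Fournier--Guillin (or Bobkov--Ledoux). The marginals $\mu_t^{[k]}$ have Gaussian tails uniformly in $t$, since the drift is bounded, so $\E\mathcal W_p^p(\bar\mu^N_t,\mu_t^{[k]})\le C_pN^{-1/2}$ in dimension one. I expect this to be the main obstacle: one has to check that the moment condition in Fournier--Guillin holds uniformly in $t\in[0,1]$, which follows from $\sup_t\E e^{|\xi_t|}<\infty$, and one has to be careful that for $p>2$ the dimension-one rate is still $N^{-1/2}$ for $\mathcal W_p^p$.

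For part (b) I would follow Lacker's entropy argument. By Girsanov's theorem, the relative entropy of $\mathrm{Law}(X^{[k],1},\dots,X^{[k],\ell})$ with respect to $(\mu^{[k]})^{\otimes\ell}$ on path space is controlled by the drift mismatch. Directly this yields only $H_\ell\lesssim \ell/N$, which through Pinsker gives $\sqrt{\ell/N}$, not the claimed $\ell/N$. To obtain the sharp rate I would reproduce Lacker's BBGKY-type hierarchy of entropies, \cite[Theorem 2.14]{Lacker2023Hierarchies}. The hierarchy inequality
\[
\frac{\dd}{\dd t}H^\ell_t\le \frac{C\ell^2}{N^2}+C\ell\,(H^{\ell+1}_t-H^\ell_t)
\]
is derived from boundedness of $b_k^*$ and the Lipschitz property \eqref{eq-lip-wasserstein}, via a transport inequality for $\mu^{[k]}_t$. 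Iterating it gives $H^\ell_1\le C\ell^2/N^2$, and Pinsker's inequality then yields $d_{\rm TV}\le C\ell/N$. Here Lacker's hypotheses (bounded interaction with unit diffusion and a deterministic initial condition, hence initial entropy zero) hold directly, so I would state that his theorem applies and verify these hypotheses.
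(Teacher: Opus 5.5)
Your proposal is correct. Part (b) follows the paper's route: Lacker's entropy hierarchy, with zero initial entropy, followed by Pinsker. Part (a) takes a genuinely different route.

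\textbf{Part (a).} The paper never estimates the fluctuation term directly. Its steps are:
\begin{itemize}
\item It uses the Lipschitz-in-$\mathcal{W}_1$ bound \eqref{eq-lip-wasserstein} and Gronwall to get $\|\sup_{s\le t}|X^{[k],n}_s-\xi^{[k],n}_s|\|_p\le Le^{L}\int_0^t\|\mathcal{W}_1(\mu^{[k],N}_u,\mu^{[k]}_u)\|_p\,\dd u$.
\item It couples $\mu^{[k],N}_t$ with the i.i.d.\ empirical measure $\nu^{[k],N}_t$ and runs a second Gronwall argument to reach $\sup_s\|\mathcal{W}_p(\mu^{[k],N}_s,\mu^{[k]}_s)\|_p\le C\sup_s\|\mathcal{W}_p(\nu^{[k],N}_s,\mu^{[k]}_s)\|_p$.
\item It deduces both claims of (a) from Fournier--Guillin.
\end{itemize}
You instead get the pathwise bound from Sznitman's conditional-variance estimate, which needs no empirical-measure rate at all. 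You then pass to $\mathcal{W}_p^p$ by observing that under synchronous coupling the Brownian parts cancel. Hence $|X^{[k],n}_t-\xi^{[k],n}_t|\le 2b^*_{\max}$, and the $L^1$ rate transfers to $L^p$. Fournier--Guillin is then needed only for the i.i.d.\ part.

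The trade-off runs both ways. Your route is more elementary, and its pathwise rate $N^{-1/2}$ does not depend on the dimension. The paper's pathwise rate is inherited from the one-dimensional rate of $\E[\mathcal{W}_1(\nu^{[k],N}_t,\mu^{[k]}_t)]$. Conversely, the paper's argument for (a) uses only the Lipschitz structure and moment bounds, while your $L^1\to L^p$ transfer needs the drift to be bounded.

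The concern you raise for $p>2$ is not an obstacle. In dimension one, Fournier--Guillin gives $\E[\mathcal{W}_p^p(\nu^{[k],N}_t,\mu^{[k]}_t)]\le CN^{-1/2}$ for every $p\ge1$ once a moment of order $q>2p$ is finite. These moments are bounded uniformly in $t$ by Lemma~\ref{lem:hoffmann2021-lem20}.

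\textbf{Part (b).} Two small corrections to your sketch.
\begin{itemize}
\item In the bounded case, the term linking $H^\ell$ to $H^{\ell+1}$ comes from two ingredients. One is Pinsker's inequality applied to the conditional law of the $(\ell+1)$-th path given the first $\ell$ ones, which is valid for any reference measure. The other is the chain rule for relative entropy. The Lipschitz property \eqref{eq-lip-wasserstein} and a transport inequality specific to $\mu^{[k]}_t$ are not what drives it.
\item Lacker's hypotheses do not hold literally. System \eqref{eq:sys} averages over all $n'$, including $n'=n$, with weight $1/N$, whereas his system uses $\frac{1}{N-1}\sum_{j\ne i}$.
\end{itemize}
If you rederive the hierarchy, the self-interaction only adds an $O(1/N)$ term to each tagged particle's drift mismatch. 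That contributes $O(\ell/N^2)$ to the source term, which is harmless.

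The paper instead rewrites the drift with $\mathfrak{b}_0=\frac1N b_k^*(x,x)$ and $\mathfrak{b}=\frac{N-1}{N}b_k^*$. Be aware that Lacker's theorem then compares with an $N$-dependent McKean--Vlasov law. One more Girsanov--Gronwall comparison is needed to reach $(\mu^{[k]})^{\otimes\ell}$: it gives relative entropy $O(N^{-2})$ per coordinate, hence a total-variation error $O(\sqrt{\ell}/N)$, which is dominated by $\ell/N$.
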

The proof of this result is postponed in Section~\ref{subsec:proof-sec-2}.

We are now ready to present the supervised classification task.

\section{Classification procedure}\label{sec:classif}
In this section, we describe our classification procedure. The statistical goal is  given in Section~\ref{subsec:Stasetting}. Section~\ref{subsec:meanFielCons} introduces the 
notion ofmean-field consistency while our proposed plug-in classifier is formally described in Section~\ref{subsec:PluginClass}. Finally, the consistency of our plug-in strategy is established in Section~\ref{subsec:excessriskdrift}.

\subsection{Statistical setting}
\label{subsec:Stasetting}

We consider the multiclass classification problem in which the goal is to predict the label $Y\in\cY$ from the trajectory of a particle $Z^{[Y]}$ generated by an interacting particle system \eqref{eq:sys}. A classifier is a measurable function \[g: \mathcal{C}([0,1], \R) \rightarrow \cY,\] and its performance is evaluated through the misclassification risk
\begin{equation}\label{eq:def-misclassification-risk}
\cR_{(Z^{[Y]})}(g) = \mathbb{P}\left(g\left(Z^{[Y]}\right) \neq Y\right).    
\end{equation}

\paragraph*{Discrete observations. }
In practice, the particle trajectories defined by \eqref{eq:sys} are observed only at discrete time points. For a regular grid of length $M+1$ on $[0,1]$ with a time step $\Delta=1/M$, we denote by $\bar{Z}^{[Y]}=({Z}^{[Y]}_{0},{Z}^{[Y]}_{\Delta} \dots, {Z}^{[Y]}_{M\Delta})$ the discrete observation of $Z^{[Y]}$.

\paragraph*{Learning sample.}
For each class $k \in \cY$, we observe a learning sample, denoted by $\mathcal{D}_N^{[k]}$, composed of  two independent particle systems of size $N$, both distributed according to $\mathbb{P}_k:=\mathbb P(\,\cdot\,\mid Y=k)$
\begin{equation}\label{eq:data-discrete}
\mathcal{D}^{[k]}_N:=\left\{\left(\bar{X}^{[k], 1}, \ldots, \bar{X}^{[k], N}\right)\; \text{and}\; \left(\bar{\widetilde{X}}^{[k], 1}, \ldots, \bar{\widetilde{X}}^{[k], N}\right)\right\}. 
\end{equation}
The whole learning sample $ \mathcal{D}_N:= \left\{ \mathcal{D}^{[1]}_N, \ldots, \mathcal{D}^{[K]}_N\right\}$  is the concatenation of the samples $(\mathcal{D}_N^{[k]})_{k \in \cY}$. Hence, we have $2N$ particle trajectories that are available per class. This independence is essential for the nonparametric estimation problem studied later, where the two samples are used separately for estimation purposes; see the comment following Theorem~\ref{theo:riskbhatMkk}.

\paragraph*{Statistical goal.}

Let \(\bar{Z}^{[Y]}\) be a new discrete observation over the time interval \([0,1]\) of a single particle \(Z^{[Y]}\) from an interacting particle system of size \(\Nt\), whose dynamics are described by Equation~\eqref{eq:sys}. We assume that this new observation is independent of the learning sample \(\mathcal{D}_N\).
Our goal is then to build, based on the learning sample $\mathcal{D}_N$, a classifier $\w{g}:\R^{M+1}\rightarrow \cY$ such that $\w{g}(\bar{Z}^{[Y]})$ provides an accurate prediction of its associated label $Y$.
Similar to Equation~\eqref{eq:def-misclassification-risk}, its misclassification risk is defined by $\cR_{(\bar Z^{[Y]})}(\widehat{g}) = \mathbb{P}\left(\widehat{g}\left(\bar Z^{[Y]}\right) \neq Y\right)$.

\subsection{Mean-field consistency}
\label{subsec:meanFielCons}

To evaluate the theoretical performance of $\w{g}$, let us introduce
\begin{equation*}
\mathcal{R}_{\Nt}^{*}:=  \inf_{g \in \cG} \cR_{({Z}^{[Y]})}(g),
\end{equation*}
the optimal risk based on the continuous observation of a particle $Z^{[Y]}$ from an interacting particle system of size $\Nt$. 
 An empirical classifier $\w{g}$ 
 should satisfy
\begin{equation*}\label{eq:mean-field-consis-1}
\E\left[\cR_{(\bar{Z}^{[Y]})}(\w{g})\right] -  \mathcal{R}_{\Nt}^{*} \rightarrow 0, \;\; {\rm as} \;\; N,\Nt \rightarrow +\infty, \;\; {\rm and} \;\; \Delta \rightarrow 0,   
\end{equation*}
where the expectation is taken under the distribution of $\mathcal{D}_N$.
Let us consider $\xi^{[Y]}$ a solution of Equation~\eqref{eq:mkv} obtained in the mean field regime ({\it e.g.} $\Nt \rightarrow +\infty$). Leveraging the result of Proposition~\ref{prop:chaos}, we have that
\begin{equation*}
\mathcal{R}^*_{\Nt} \xrightarrow[\Nt \rightarrow \infty]{} \mathcal{R}^*  
\end{equation*}
where 
\begin{equation}\label{eq:bayes}
\mathcal{R}^*: = \inf_{g \in \mathcal{G}} {\cR_{(\xi^{[Y]})}(g)},
\end{equation}
see Remark~\ref{rem:excessriskNtest}.
In view of this observation, it is natural to introduce the following notion of consistency. We say that $\w{g}$ is {\it mean-field consistent} if
\begin{equation}\label{eq:mean-field-consis-2}
\mathbb{E}\left[\cR_{(\bar{Z}^{[Y]})}(\w{g})\right] \rightarrow \mathcal{R}^*, \;\; {\rm as} \;\; N, \Nt \rightarrow + \infty \;\; {\rm and} \;\; \Delta \rightarrow 0.
\end{equation}
Let us study this convergence using the following decomposition
\begin{equation} \label{eq:decomposition-error}
\E[\cR_{(\bar{Z}^{[Y]})}(\w{g})] -  \mathcal{R}^* =
\left(\E[ \cR_{(\bar{Z}^{[Y]})}(\w{g})]- \E[ \cR_{(\bar{\xi}^{[Y]})}(\w{g})]\right)+ \left(\E[ \cR_{(\bar{\xi}^{[Y]})}(\w{g})]- \cR^* \right).
\end{equation}
The first term on the right-hand side measures the difference in misclassification risk between a particle $\bar{Z}^{[Y]}$ from the finite interacting particle system~\eqref{eq:sys} and its mean-field counterpart $\xi^{[Y]}$ defined by the McKean--Vlasov equation~\eqref{eq:mkv}.
This term is controlled by leveraging the propagation of chaos properties in total variation distance. 
The second term is the excess risk of the classifier in the mean-field model with respect to the optimal risk $\cR^*$. Its analysis involves the time-discretization error, the approximation of the marginal distributions, and the estimation error of the drift functions $(b_k^*)_{k \in \cY}$, which is investigated in Section~\ref{sec:inference}.
This decomposition leads to the following result.
\begin{proposition}\label{prop:excessriskTerm13}
Grant Assumption~\ref{ass:bsup_et_lip_in_x_and_y}.
Let $\bar{Z}^{[Y]}$ be the new observation of a particle coming from an IPS of size $\Nt$ with unknown label $Y$, and let $\bar{\xi}^{[Y]}$ be a discrete observation of a solution of the McKean–Vlasov Equation~\eqref{eq:mkv} with the same label as $\bar{Z}^{[Y]}$. Let $\w{g}$ be a classifier built on the learning sample $\mathcal{D}_N$. Then we have 
\begin{equation*}
\left|\E[\cR_{(\bar{Z}^{[Y]})}(\w{g})]-\cR^*\right|\leq  \mathfrak{C}\Nt^{-1} +  \left(\E[ \cR_{(\bar{\xi}^{[Y]})}(\w{g})]- \cR^* \right).
\end{equation*}
\end{proposition}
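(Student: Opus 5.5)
We start from the decomposition \eqref{eq:decomposition-error} and the triangle inequality. The second term, $\E[\cR_{(\bar{\xi}^{[Y]})}(\w{g})]-\cR^*$, is left as it is; it appears verbatim on the right-hand side, and it is nonnegative since $\cR^*$ is the infimum over all classifiers of continuous paths (a classifier of the discretized path is one of them). So the whole task is to prove
\[
\left|\E[\cR_{(\bar{Z}^{[Y]})}(\w{g})]-\E[\cR_{(\bar{\xi}^{[Y]})}(\w{g})]\right|\le \mathfrak{C}\Nt^{-1}.
\]

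First we condition on the learning sample $\mathcal{D}_N$. The test particle is independent of $\mathcal{D}_N$, so once $\mathcal{D}_N$ is fixed, $\w{g}$ is a deterministic measurable map $\R^{M+1}\to\cY$. We then condition on the label and use the uniform law of $Y$:
\[
\cR_{(\bar{Z}^{[Y]})}(\w{g})=\frac1K\sum_{k=1}^K \P_k\big(\w{g}(\bar Z^{[k]})\neq k\big),
\]
and the same formula holds with $\bar\xi^{[k]}$ in place of $\bar Z^{[k]}$. For each $k$, set $f_k:=\one_{\{\w{g}(\pi(\cdot))\neq k\}}$ on $\mathcal{C}([0,1],\R)$, where $\pi$ is the evaluation map at the grid points $0,\Delta,\dots,M\Delta$. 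This map is measurable, and $f_k$ takes values in $[0,1]\subset[-1,1]$.

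Next, for each $k$, the difference $\P_k(\w{g}(\bar Z^{[k]})\neq k)-\P(\w{g}(\bar\xi^{[k]})\neq k)$ equals $\int f_k\,\dd\,\mathrm{Law}(Z^{[k]})-\int f_k\,\dd\mu^{[k]}$. By exchangeability of the system \eqref{eq:sys}, $Z^{[k]}$ has the law of $X^{[k],1}$ in a system of size $\Nt$. We apply Proposition~\ref{prop:chaos}(b) with $\ell=1$ and $N=\Nt$. Together with definition \eqref{eq:total-variation-distance}, this bounds the absolute value of the difference by $2d_{\rm TV}(\mathrm{Law}(X^{[k],1}),\mu^{[k]})\le \mathfrak{C}\Nt^{-1}$. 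Averaging over $k$ and then taking expectation over $\mathcal{D}_N$ preserves this bound, because the constant does not depend on $\mathcal{D}_N$, $\Delta$ or $k$.

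The main obstacle is justifying the conditioning step. The bound must be uniform over all measurable $\w{g}$, and the map $\mathcal{D}_N\mapsto \w{g}$ must be jointly measurable so that Fubini applies. The total variation bound handles uniformity at no cost, since it is a supremum over all test functions. Joint measurability holds by the independence of the test particle from $\mathcal{D}_N$ and by the measurability of the plug-in construction. The one remaining point is that Proposition~\ref{prop:chaos}(b) gives total variation on path space. Since this is stronger than total variation on the discrete observations, the discretization causes no additional error.
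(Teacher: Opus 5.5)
Your proposal is correct and follows essentially the same route as the paper: condition on $\mathcal{D}_N$ so that $\w{g}$ is a fixed measurable map, write each risk as a uniform average over classes, and apply the path-space total-variation bound of Proposition~\ref{prop:chaos}(b) with $\ell=1$ and system size $\Nt$ to the indicator $\one_{\{\w{g}(\mathrm{Proj}_\Delta(\cdot))\neq k\}}$. Your remark that $\E[\cR_{(\bar{\xi}^{[Y]})}(\w{g})]-\cR^*\ge 0$ is a point the paper leaves implicit. It holds because $\w{g}\circ\mathrm{Proj}_\Delta$ is itself a classifier on continuous paths, and it is exactly what justifies the absolute value on the left-hand side.
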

The proof of this result is given in Section~\ref{proofs:classif}.
In the following section, the aim is to propose a classifier $\w{g}$ and control the mean-field excess risk $\E[\cR_{(\bar{\xi}^{[Y]})}(\w{g})] -  \mathcal{R}^*$. 

\subsection{Plug-in classifier}
\label{subsec:PluginClass}

In view of the mean-field consistency property, we first characterize the optimal classifier associated with the McKean--Vlasov limit. We define
\begin{equation*}
g^* \in \argmin{g\in \cG}
\cR_{(\xi^{[Y]})}(g),
\end{equation*}
so that $\cR^{*}=\cR_{(\xi^{[Y]})}(g^*)$. The optimal classifier $g^*$ is characterized by
\begin{equation}\label{eq:def-pi}
 g^*\big(\xi^{[Y]}\big) \in \argmax{k \in \mathcal{Y}} \pi_k^*\big(\xi^{[Y]}\big), \;\; {\rm with} \;\; \pi_k^*\big(\xi^{[Y]}\big) =\P\big(Y=k\,\big|\,\xi^{[Y]}\big).
\end{equation}

Proposition \ref{prop:bayes}
provides a closed-form expression of the conditional probabilities $\pi_k^*$. In particular, it generalizes the results of \cite{DDMVC2022} obtained in the framework where the observation is solution of a time homogeneous diffusion process.

\begin{proposition}\label{prop:bayes}
Grant Assumption \ref{ass:bsup_et_lip_in_x_and_y}. 
Let $\xi^{[Y]}=(\xi^{[Y]}_t)_{t\in[0,1]}$ denote the unique strong solution to Equation \eqref{eq:mkv}, and let $(\mu^{[k]}_t)_{t\in[0, 1]}$ denote its marginal distribution if it comes from class $k$.
We define
\begin{equation}
F^*_k(\xi^{[Y]}):=\int_{0}^{1}{{b_k^{*}}\big[\xi^{[Y]}_s, \mu^{[k]}_s\big] \dd \xi^{[Y]}_s}-\frac{1}{2}\int_{0}^{1}b^{*2}_{k}\big[\xi^{[Y]}_s, \mu^{[k]}_s\big]\dd s, \quad k \in \cY.
\end{equation}
Then, for each $k \in \cY$, the conditional probability $\pi^{*}_k$ defined in~\eqref{eq:def-pi} satisfies 
\begin{equation}\label{eq:pi}
\pi^*_k(\xi^{[Y]})= {\rm softmax}\left(F^*(\xi^{[Y]})\right)_k, \;\; \P-\text{ a.s.},
\end{equation}
where $F^*(\xi^{[Y]}) = \left(F_1^*(\xi^{[Y]}), \ldots, F_K^*(\xi^{[Y]})\right)$ and, for $z=(z_1,\ldots,z_K)\in\mathbb{R}^K$, the $k$-th component of the softmax function is defined by 
$
\mathrm{softmax}(z)_k
=
\frac{e^{z_k}}{\sum_{j=1}^K e^{z_j}}.
$
\end{proposition}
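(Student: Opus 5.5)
The plan is to reduce the McKean--Vlasov problem to a classical diffusion classification problem with a time-inhomogeneous drift, and then apply Girsanov's theorem. Given $Y=k$, the marginal flow $(\mu^{[k]}_t)_{t\in[0,1]}$ is deterministic, so $\xi^{[k]}$ solves the ordinary (non-McKean) SDE
\[
\dd \xi_t = \beta_k(t,\xi_t)\,\dd t + \dd W_t,\qquad \xi_0=x_0,\qquad \beta_k(t,x):=b_k^*[x,\mu^{[k]}_t].
\]
By Assumption~\ref{ass:bsup_et_lip_in_x_and_y}, $\beta_k$ is bounded by $b^*_{\max}$, measurable in $(t,x)$ and Lipschitz in $x$, so this linear-in-noise SDE has a unique strong solution. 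In particular, $\mathrm{Law}(\xi^{[k]})=:\mathbb{Q}_k$ is the law of this diffusion on $\mathcal{C}([0,1],\R)$.

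\textbf{Step 1: density of each class law.} Let $\mathbb{W}_{x_0}$ be the Wiener measure started at $x_0$ and $\omega$ the canonical process. Since $\beta_k$ is bounded, Novikov's condition holds, and Girsanov's theorem gives $\mathbb{Q}_k\sim\mathbb{W}_{x_0}$ with
\[
\frac{\dd\mathbb{Q}_k}{\dd\mathbb{W}_{x_0}}(\omega)=\exp\Big(\int_0^1\beta_k(s,\omega_s)\,\dd\omega_s-\tfrac12\int_0^1\beta_k^2(s,\omega_s)\,\dd s\Big)=\exp\big(F_k^*(\omega)\big).
\]
The one genuine subtlety is that the stochastic integral is defined only $\mathbb{W}_{x_0}$-a.s. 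I will fix a single measurable version of it on path space, for example as the limit of Riemann sums along a subsequence. Because all the $\mathbb{Q}_j$ are mutually equivalent to $\mathbb{W}_{x_0}$, this version coincides a.s. with the Itô integral $\int_0^1\beta_k(s,\xi_s)\,\dd\xi_s$ under every $\mathbb{Q}_j$. That is exactly what is needed so that $F^*_k(\xi^{[Y]})$ means the same thing regardless of the true label.

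\textbf{Step 2: Bayes formula.} The marginal law of $\xi^{[Y]}$ is $\frac1K\sum_j\mathbb{Q}_j$, because $Y$ is uniform and independent of $W$. For any measurable set $A$ in path space,
\[
\P(Y=k,\ \xi^{[Y]}\in A)=\frac1K\int_A e^{F_k^*}\,\dd\mathbb{W}_{x_0}.
\]
Dividing by $\frac1K\sum_j\int_A e^{F_j^*}\,\dd\mathbb{W}_{x_0}$, the conditional probability is identified as
\[
\P(Y=k\mid\xi^{[Y]})=\frac{e^{F_k^*(\xi^{[Y]})}}{\sum_j e^{F_j^*(\xi^{[Y]})}}\quad \text{a.s.}
\]
The denominator is strictly positive, so the ratio is always well defined. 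This is \eqref{eq:pi}.

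The main obstacle is the measurability and version issue in Step 1: making $F_k^*$ a single measurable functional on $\mathcal{C}([0,1],\R)$ that can be evaluated along $\xi^{[Y]}$ under every class simultaneously. The route above, fixing a version under $\mathbb{W}_{x_0}$ and transferring it through mutual absolute continuity, handles this. Once that is settled, the remaining arguments are standard Girsanov and Bayes computations.
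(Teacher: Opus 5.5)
Your proof is correct and follows essentially the paper's route: a Girsanov change of measure against a Brownian reference, then the Bayes computation for the uniform mixture $\P=\frac1K\sum_k\P_k$. The paper builds its reference measure $\P_0$ on the underlying filtered space, with $\xi=x_0+W^0$. You instead use Wiener measure on path space and fix one version of $\int_0^1\beta_k(s,\omega_s)\,\dd\omega_s$; this makes the identification $\Phi_1^k=\exp(F_k^*(\xi))$ hold under every class, a point the paper passes over quickly.
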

Consequently, the optimal classifier can equivalently be written as
\begin{equation}\label{eq:gstarfinal}
g^*\big(\xi^{[Y]}\big) \in \argmax{k \in \cY} F_k^*\big(\xi^{[Y]}\big).
\end{equation}

\paragraph{Plug-in classifier.}
In view of the expression of $g^*$ given by Equation~\eqref{eq:gstarfinal},
we consider a plug-in strategy. First, for each $k \in \cY$, based on $\mathcal{D}_{N}^{[k]}$, we construct an estimator $\w{b}_k$ of $b_k^*$. An explicit construction of this estimator is provided in Section~\ref{sec:inference}. Second, for a new observation $\bar{Z}^{[Y]}$, we approximate $F_k^*(\bar{Z}^{[Y]})$ by replacing $b_k^*$ with $\w{b}_k$, approximating the marginal distribution \(\mu^{[k]}\) with its empirical counterpart constructed from the training sample, and estimating continuous-time integrals with their discrete approximations. This leads to the estimated score \(\widehat F_k(\bar Z^{[Y]})\), defined as follows,
\begin{equation}\label{eq:Fhathat}
{\w{F}}_k(\bar{Z}^{[Y]}):=\frac{1}{N}\sum_{n=1}^N \sum_{m=0}^{M-1} \w{b}_k\left({Z}^{[Y]}_{m\Delta},\wt{X}_{m\Delta}^{[k],n}\right)\left(Z^{[Y]}_{(m+1)\Delta}-Z^{[Y]}_{m\Delta}\right)-\frac{\Delta}{2}\sum_{m=0}^{M-1}  \left(\frac{1}{N}\sum_{n=1}^N \w{b}_k(Z^{[Y]}_{m\Delta}, \wt{X}^{[k],n}_{m\Delta})\right)^2.
\end{equation}
The corresponding conditional probabilities can then be estimated by
\begin{equation}\label{eq:def-pi-hat}
\widehat\pi_k(\bar Z^{[Y]}) = \operatorname{softmax}\big(\widehat F(\bar Z^{[Y]})\big)_k. 
\end{equation}
Since the softmax function preserves the ordering of its coordinates, we define the plug-in classifier by
\begin{equation}\label{eq:def-g-hat} 
\widehat g(\bar Z^{[Y]}) \in \argmax{k\in\mathcal Y} \widehat F_k(\bar Z^{[Y]}). 
\end{equation}
Figure \ref{fig:classification_proc}  summarizes the procedure.

\begin{figure}[ht]
    \centering   
    \begin{tikzpicture}[
        >=Latex,
        font=\sffamily\small, 
        box/.style={
            draw=black, 
            thick, 
            rounded corners=6pt, 
            align=center, 
            text width=4.2cm, 
            inner sep=6pt,
            fill=white
        },
        theorybox/.style={box, draw=black, text=black!70, dashed}, 
        modelbox/.style={box, draw=black, very thick}, 
        phasetitle/.style={font=\bfseries, anchor=south west, inner xsep=0pt},
    ]

    \node[modelbox, xshift=-0.8cm] (ps) {Training Data\\({IPSs} of size $N$ from $K$ classes \eqref{eq:sys})};
    \node[theorybox, right=4cm of ps] (mkv) {McKean--Vlasov\\limit equation \eqref{eq:mkv}};
    
    \node[modelbox, below=1.2cm of ps] (plugin) {Plug-in classifier $\widehat{g}$\\\eqref{eq:Fhathat}-\eqref{eq:def-g-hat}};
    \node[theorybox, right=4cm of plugin] (opt) {Optimal classifier $g^*$\\{\eqref{eq:gstarfinal}}};

    \draw[-Latex, thick] (ps.east) -- node[above, align=center, font=\footnotesize] {Propagation\\of chaos} node[below, font=\footnotesize] {$N \rightarrow \infty$} (mkv.west);
    \draw[-Latex, thick] (ps.south) -- node[left, font=\footnotesize] {Construct} (plugin.north);
    \draw[-Latex, thick, draw=black!50] (mkv.south) -- node[right, align=left, font=\footnotesize, text=black] {{Characterized} in\\Prop. \ref{prop:bayes}} (opt.north);
    
    \draw[Latex-Latex, thick, dashed, draw=black!50] (plugin.east) -- 
        node[above, align=center, text=black!70, font=\footnotesize] {Mean-field excess risk \\$\E[\cR_{(\bar{\xi}^{[Y]})}(\w{g})] -  \mathcal{R}^*$} (opt.west);
    
    \node[fit=(ps) (mkv) (plugin) (opt)] (phase1_nodes) {};
    \node[phasetitle] at ($(phase1_nodes.north west) + (0, 0.2)$) (p1title) {Phase I: Training \& Theoretical Mean-Field Analysis};
    \node[draw=black!, thick, rounded corners=8pt, fit=(phase1_nodes) (p1title), inner sep=10pt] (frame1) {};

    \node[modelbox, below=2cm of frame1.south, text width=3.5cm, xshift=0.5cm] (model_inf) {\textbf{Model}\\Plug-in classifier $\widehat{g}$};
    
    \node[modelbox, draw=black, text width=4.5cm, left=0.8cm of model_inf] (input) {\textbf{Input}\\{One particle trajectory $\bar{Z}^{[Y]}$}\\from a new IPS of size $\Nt$};
    \node[modelbox, draw=black, text width=3.5cm, right=0.8cm of model_inf] (output) {\textbf{Output}\\Predicted label {$\widehat{g}(\bar{Z}^{[Y]})$}};

    \draw[-Latex, thick, draw=black] (input.east) -- (model_inf.west);
    \draw[-Latex, thick, draw=black] (model_inf.east) -- (output.west);

    \node[fit=(input) (model_inf) (output)] (phase2_nodes) {};
    \node[phasetitle, text=black] at ($(phase2_nodes.north west) + (0, 0.2)$) (p2title) {Phase II: Prediction};
    \node[draw=black, thick, rounded corners=8pt, fit=(phase2_nodes) (p2title), inner sep=10pt] (frame2) {};
    \end{tikzpicture}
    \caption{Classification procedure: data-driven construction and its mean-field benchmark.}
    \label{fig:classification_proc}
\end{figure}


\subsection{Mean-field excess      risk}\label{subsec:excessriskdrift}

Proposition~\ref{prop:excessriskTerm13} highlights that the study of mean-field consistency relies on controlling the mean-field excess risk 
\begin{equation*}
\E\left[\mathcal{R}_{(\bar{\xi}^{[Y]})}(\w{g}) - \mathcal{R}^*\right]
\end{equation*}
where $\bar{\xi}^{[Y]}=(\xi^{[Y]}_{m\Delta})_{m=0, \ldots, M}$ is a discrete-time observation of a solution to Equation~\eqref{eq:mkv}.
The objective of this section is to provide a bound on the mean-field excess risk. To this end, for each class $k \in \mathcal{Y}$, we introduce the following semi-norm
defined for a measurable function $h: \mathbb{R}^2 \rightarrow \mathbb{R}$ as 
\begin{equation*}
\left\|h \right\|^2_{M,k}:=\mathbb{E}\left[\frac{1}{M} \sum_{m=0}^{M-1}
\left(\frac{1}{N}\sum_{n'=1}^N h(\xi^{[Y]}_{m\Delta}, \wtX_{m\Delta}^{[k],n'})\right)^2\right] =  \dfrac{1}{K}\sum_{k'=1}^K \|h\|^2_{M,k,k'},    
\end{equation*}
where for each $k'\in\cY$, we define
\begin{equation}\label{eq:normMkk}
\|h\|_{M,k,k'}^2:=\E\left[ 
\frac{1}{M} \sum_{m=0}^{M-1}
\left(\frac{1}{N}\sum_{n'=1}^N h(\xi^{[k']}_{m\Delta}, \wtX_{m\Delta}^{[k],n'})\right)^2
\right].
\end{equation}

In order to establish a bound on the excess risk of the classifier, we first need to ensure that the estimator $\widehat{b}_k$ is bounded. We therefore consider a thresholded version of $\widehat{b}_k$, which we continue to denote by $\widehat{b}_k$,
\begin{equation}\label{eq:bhattronc}
\w{b}_k(\cdot,\cdot):=\w{b}_k(\cdot,\cdot) \one_{\{|\w{b}_k(\cdot,\cdot)| \leq  \log^{1/2}(N)\}} + \text{sgn}\left(\w{b}_k\right)  \log^{1/2}(N) \one_{\{|\w{b}_k(\cdot,\cdot)| > \log^{1/2}(N)\}}. 
\end{equation}

Note that, since $\Vert b_k^{*}\Vert_{\infty}\leq b_{\max}^*$, we have $\left\|b_k^*\right\|_{\infty} \leq \sqrt{\log(N)}$ for $N$ sufficiently large. Hence, the truncation only acts on unusually large values of the estimator and does not affect its asymptotic properties. 
We can now state the resulting bound on the excess risk of $\w{g}$.
\begin{theo}\label{theo:excessrisk}
Grant Assumption~\ref{ass:bsup_et_lip_in_x_and_y}. Let $\widehat g$ be the classifier defined in~\eqref{eq:def-g-hat}, with
$\widehat b_k$, $k\in\mathcal Y$, given by~\eqref{eq:bhattronc}.
Then, for $N$ sufficiently large,
$$
\E\left[\mathcal{R}_{(\bar{\xi}^{[Y]})}(\w{g}) - \mathcal{R}^*\right] \leq 
\mathfrak{C} \left(\sqrt{\Delta} + N^{-1/2}+ \log^{1/2}(N) \sum_{k=1}^K 
\E\left[ \|\w{b}_k -{b}_k^*\|_{M,k}\right]
\right).
$$
\end{theo}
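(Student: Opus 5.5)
The plan follows the standard route for plug-in classifiers: compare $\widehat\pi$ with $\pi^*$, then control the distance between the estimated scores $\widehat F_k$ and the true scores $F_k^*$. Conditionally on $\mathcal{D}_N$, write $\widehat\pi_k(\bar\xi^{[Y]})=\mathrm{softmax}(\widehat F(\bar\xi^{[Y]}))_k$. The classical comparison inequality for plug-in multiclass rules gives
\begin{equation*}
\mathcal{R}_{(\bar\xi^{[Y]})}(\widehat g)-\mathcal{R}^*\le \E\Big[\pi^*_{g^*}(\xi^{[Y]})-\pi^*_{\widehat g}(\xi^{[Y]})\,\Big|\,\mathcal{D}_N\Big]\le 2\sum_{k=1}^K\E\big[|\widehat\pi_k(\bar\xi^{[Y]})-\pi^*_k(\xi^{[Y]})|\,\big|\,\mathcal{D}_N\big].
\end{equation*}
This uses that $\widehat g$ maximizes $\widehat\pi$, and that $\pi^*$ is measurable with respect to the continuous path while $\widehat g$ uses only the discrete one; both facts enter through the same inequality. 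Since softmax is $1$-Lipschitz from $(\R^K,\ell^\infty)$ to $(\R^K,\ell^1)$ up to a constant, it suffices to bound $\sum_k\E|\widehat F_k(\bar\xi^{[Y]})-F_k^*(\xi^{[Y]})|$. The Bayes formula of Proposition~\ref{prop:bayes} is what makes $\pi^*$ a softmax of $F^*$.

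Next I decompose $\widehat F_k-F^*_k$ by inserting intermediate scores. Let $\widetilde b_k^N(x,m):=\frac1N\sum_{n}b_k^*(x,\wt X^{[k],n}_{m\Delta})$ and $\widehat b_k^N(x,m):=\frac1N\sum_n \widehat b_k(x,\wt X^{[k],n}_{m\Delta})$. The decomposition has three terms.

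\textbf{(i) Time discretization.} This is the gap between $F_k^*$ and its Riemann--Itô discretization with $b_k^*[\xi_{m\Delta},\mu^{[k]}_{m\Delta}]$. Write $\dd\xi=b_Y^*[\xi,\mu^{[Y]}]\dd t+\dd W$. The Itô isometry, Lipschitz continuity \eqref{eq-lip-wasserstein}, $\E|\xi_t-\xi_s|^2\le \mathfrak{C}|t-s|$ and $\mathcal{W}_2(\mu_t,\mu_s)\le\mathfrak{C}|t-s|^{1/2}$ together give a bound $\mathfrak{C}\sqrt\Delta$.

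\textbf{(ii) Empirical measure.} This replaces $\mu^{[k]}_{m\Delta}$ by the empirical measure of the independent sample $\wt X^{[k]}$. The new particle $\xi^{[Y]}$ is independent of $\mathcal{D}_N$, so by \eqref{eq-lip-wasserstein} and Proposition~\ref{prop:chaos}(a) with $p=1$ the drift error is bounded by $L\,\E\mathcal{W}_1(\mu^{N}_{m\Delta},\mu_{m\Delta})$. Both the stochastic-integral part (via Itô isometry, requiring $\E\mathcal{W}_1^2$, available with $p=2$) and the $\dd t$ part then contribute $O(N^{-1/2})$ or better. If the $\mathcal{W}_2^2\lesssim N^{-1/2}$ bound only yields $N^{-1/4}$ in $L^2$, I would instead use a direct variance computation: conditionally on $\xi$, the sum is an average of $N$ exchangeable nearly independent terms, and Proposition~\ref{prop:chaos}(b) gives the $\ell=2$ decorrelation $O(1/N)$. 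That yields $N^{-1/2}$.

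\textbf{(iii) Estimation of $b_k^*$.} This is the difference between scores using $\widetilde b_k^N$ and $\widehat b_k^N$. Write $\xi_{(m+1)\Delta}-\xi_{m\Delta}=\int b_Y^*\dd t+\Delta W_m$. The martingale part has $L^1$ norm at most its $L^2$ norm, namely $(\Delta\sum_m\E(\widehat b_k^N-\widetilde b_k^N)^2)^{1/2}=\|\widehat b_k-b_k^*\|_{M,k}$ conditionally on $\mathcal{D}_N$. Here I use that $\widehat b_k$ is built from the other sample $X^{[k]}$, and that the independence of $\xi$ from $\mathcal D_N$ makes the conditional expectation exactly the semi-norm. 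The drift part is bounded by $b^*_{\max}\Delta\sum_m\E|\widehat b_k^N-\widetilde b_k^N|\le b^*_{\max}\|\cdot\|_{M,k}$ by Cauchy--Schwarz. The quadratic term uses $a^2-b^2=(a-b)(a+b)$ with $|a+b|\le \log^{1/2}N+b^*_{\max}\le 2\log^{1/2}N$, which is where the truncation \eqref{eq:bhattronc} is used and produces the $\log^{1/2}(N)$ factor.

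Summing over $k$ and taking expectation over $\mathcal{D}_N$ gives the claim. I expect the main obstacle to be step (ii): getting the rate $N^{-1/2}$ rather than the weaker rate implied by the $\mathcal{W}_p^p$ bound of Proposition~\ref{prop:chaos}(a). I would handle it via the exchangeability and variance argument sketched above, exploiting that the discrepancy is a linear functional of the empirical measure evaluated at the independent point $\xi_{m\Delta}$.
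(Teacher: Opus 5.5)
Your proposal is correct and follows essentially the same route as the paper: the comparison inequality of \cite[Proposition 2]{denis2020consistent}, Lipschitz continuity of the softmax, and the same split of $\w F_k-F_k^*$ into estimation, empirical-measure and time-discretization errors ($T_1$, $T_2$, $T_3$ in the paper), each handled as you describe. As in the paper, the truncation \eqref{eq:bhattronc} produces the $\log^{1/2}(N)$ factor through $a^2-b^2=(a-b)(a+b)$.

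The only real deviation is in step (ii). There you obtain the $O(N^{-1})$ conditional second moment of $\frac1N\sum_n b_k^*(x,\wt X^{[k],n}_{m\Delta})-\int b_k^*(x,y)\mu^{[k]}_{m\Delta}(\dd y)$ from the $\ell=2$ total-variation bound of Proposition~\ref{prop:chaos}(b) plus a covariance expansion. The paper instead invokes the Bernstein inequality of \cite{hoffmann2021} through Lemma~\ref{lem:application-TV-distance-and-Bernstein}(c); both arguments are valid.

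One small correction: $\w b_k$ is built from both samples of $\mathcal D_N^{[k]}$, not only from $X^{[k]}$, since the contrast \eqref{eq:btilde} uses $\wt X^{[k]}$. This is harmless, because step (iii) only needs conditioning on all of $\mathcal D_N$ together with the independence of $\xi^{[Y]}$ from it.
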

The above result shows that the excess risk of the classifier $\w{g}$ under the mean-field regime is bounded by a sum of three terms. The first term reflects the time discretization error. The second term results from the empirical approximation of the marginal distribution. 
The last term corresponds to the estimation error of the drift functions. Note that the logarithmic factor is due to the bound imposed on $\left\|\w{b}_k\right\|_{\infty}$.
This result highlights that if the estimators $\w{b}_k$, $k \in \mathcal{Y}$, satisfy
$\sqrt{\log(N)} \,\E\big[ \|\w{b}_k -{b}_k^*\|_{M,k}\big]\rightarrow 0$,
then the mean-field excess goes to $0$ as $M,N \rightarrow +\infty$. In view of Proposition~\ref{prop:excessriskTerm13}, this implies that $\w{g}$ is mean-field consistent.
Hence, deriving an explicit convergence rate requires controlling the estimation error of the drift functions. In the next section, we introduce explicit estimators of the drift functions and establish rates of convergence for the resulting classifier $\w{g}$.

\begin{remark}
It is interesting to note that a similar bound is established in Theorem~1 of~\cite{DDMVC2022} for classical SDEs driven by standard Brownian motion, without interactions. In that setting, the term of order $N^{-1/2}$ comes from the estimation of the unknown distribution of the label $Y$
whereas in the present work this distribution is assumed to be known.
By contrast, in Theorem~\ref{theo:excessrisk}, the term of order $N^{-1/2}$ stems from the empirical approximation of the mean-field distribution.
\end{remark}

\section{Estimation of the drift functions}
\label{sec:inference}

In this section, we present the construction of the estimators $\w{b}_k$
of the bivariate drift functions $b_k^*$, $k \in \mathcal{Y}$. More specifically, we adapt the procedure developed in~\cite{DDM2019} in the new setting of IPS. Particularly, the procedure relies on the minimization of a $L_2$ contrast over a constraint space of functions. 
The estimation procedure is formally described in Section~\ref{subsec:NonparamEstimator} and its theoretical properties are stated in Section~\ref{subsec:TheoPropertiesEstimator}.

\subsection{Nonparametric drift estimator}
\label{subsec:NonparamEstimator}

We first introduce the considered space of functions.

\paragraph*{Constraint space of approximation.}

Let $A > 0$. We consider a space function generated by the $B$-spline basis on 
$[-A,A]^2$, which is an extension of the univariate $B$-spline basis.
We refer to~\cite[Chapters 14 and 15]{Gyorfi-2002a} for more details.

Let  $D > 0$ be the dimension parameter, and $R \geq 1$ the degree. The sequence of knot vectors \({\bf u} = (u_{-R}, \ldots, u_{R+D})\) is defined by
$
u_d = -A + 2d \frac{A}{D}, ~ d = 0, \ldots, D, 
$
with \( u_{-R} = \ldots = u_{-1} = -A \) and \( u_{D+1} = \ldots = u_{R+D} = A \). The tensor product spline basis \((B_{j_1, j_2})_{-R \leq j_1, j_2 \leq D-1}\) is then defined by
\[
B_{j_1, j_2}(x, y) = B_{j_1}(x) B_{j_2}(y),
\]
where \( B_{j_1} \) and \( B_{j_2} \), for \( -R \leq j_1, j_2 \leq D-1 \), are the univariate \( B \)-spline basis functions of order \( R \), defined by the knot vector \(\bf u\) (see, e.g., \cite[Definition 14.2]{Gyorfi-2002a}). 
The basis \((B_{j_1,j_2})_{-R \leq j_1, j_2 \leq D-1}\) inherits properties similar to those of the univariate \( B \)-spline basis. In particular, they are positive, null under 
$[u_{j_1}, u_{j_1+R+1}) \times [u_{j_2}, u_{j_2+R+1}) $ and for all $(x,y)\in \R^2$,
\begin{equation*}
\sum_{j_1,j_2=-R}^{D-1} B_{j_1,j_2}(x, y) = 1.
\end{equation*}
Moreover, for each $h \in \mathcal{S}_{D,R}$, $h(x,y) = 0$, for all $(x,y) \notin [-A,A]^2$, and $h \in \mathcal{C}^{R-1}$ on $(-A,A)^2$. 
The corresponding spline approximation space is then defined as
\begin{equation}\label{eq:SDR}
\mathcal{S}_{D,R} := \left\{ \sum_{j_1,j_2=-R}^{D-1} a_{(j_1,j_2)} B_{j_1,j_2} : {\bf a} = (a_{(j_1,j_2)}) \in \mathbb{R}^{(D+R)^2}, \; \|{\bf a}\|_2^2 \leq (D+R)^2 A^2 \log(N) \right\}.
\end{equation}
M
Hence the space $\mathcal{S}_{D,R}$ consists of linear combination of $B-$splines, with a imposed on the $\ell^2$-norm of the coefficients. This constraint ensures that the coefficients are not too large and yields some important properties.  
In particular, it is well known that, without this constraint, the definition of the estimator will depend on the invertibility of the Gram matrix (see e.g. \cite{comteregression}). However, this is no longer the case here. As detailed below, this constraint guarantees the invertibility of the Gram matrix used to define the estimator.
In particular, as in~\cite[Proposition 3]{DDMVC2022}, the following result gives the approximation error of the set $\mathcal{S}_{D,R}$ for the drift functions $b_k^*, k \in \mathcal{Y}$. 
\begin{proposition}\label{prop:splineapprox}
Grant Assumption~\ref{ass:bsup_et_lip_in_x_and_y}. If $A\geq 1$, for $R \geq 1$, and each $k \in  \mathcal{Y},$ there exists $\mathfrak{b}_k \in \mathcal{S}_{D,R}$ such that,
\begin{equation}\label{eq:biais}
\left|\mathfrak{b}_k(x,y) - b^*_k(x,y)\right| \leq \dfrac{4A(R+1)}{D}L,
\;\; {\forall (x,y) \in (-A,A)^2},
\end{equation}
where $L$ is the Lipschitz constant of $b_k^*$ given in Assumption \ref{ass:bsup_et_lip_in_x_and_y}.
\end{proposition}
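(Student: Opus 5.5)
We will build $\mathfrak{b}_k$ by quasi-interpolation: take tensor-product B-spline coefficients equal to values of $b_k^*$ at knot-related points, then check that the coefficients meet the $\ell^2$ constraint in \eqref{eq:SDR}. Concretely, for $-R\le j\le D-1$ we pick a point $\tau_j\in[u_j,u_{j+R+1}]\cap[-A,A]$ inside the support of $B_j$ (for instance the clamped midpoint of that interval), and set
\[
\mathfrak{b}_k(x,y):=\sum_{j_1,j_2=-R}^{D-1} b_k^*(\tau_{j_1},\tau_{j_2})\,B_{j_1,j_2}(x,y).
\]

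The approximation bound is obtained as follows. Fix $(x,y)\in(-A,A)^2$. The $B_{j_1,j_2}$ are nonnegative and sum to one on $[-A,A]^2$, so
\[
|\mathfrak{b}_k(x,y)-b_k^*(x,y)|\le \sum_{j_1,j_2} B_{j_1,j_2}(x,y)\,\big|b_k^*(\tau_{j_1},\tau_{j_2})-b_k^*(x,y)\big|.
\]
Only the indices with $B_{j_1}(x)B_{j_2}(y)\neq 0$ contribute. For these, $x\in[u_{j_1},u_{j_1+R+1})$, and $\tau_{j_1}$ lies in the same interval. That interval consists of at most $R+1$ knot cells, each of length $2A/D$, so $|x-\tau_{j_1}|\le 2A(R+1)/D$, and likewise $|y-\tau_{j_2}|\le 2A(R+1)/D$. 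Assumption~\ref{ass:bsup_et_lip_in_x_and_y} then bounds each difference by $L(|x-\tau_{j_1}|+|y-\tau_{j_2}|)\le 4A(R+1)L/D$. Summing against the partition of unity gives \eqref{eq:biais}.

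It remains to check that the coefficient vector lies in the constraint set of \eqref{eq:SDR}. Each coefficient satisfies $|a_{(j_1,j_2)}|\le \|b_k^*\|_\infty\le b^*_{\max}$, hence
\[
\|{\bf a}\|_2^2\le (D+R)^2 (b^*_{\max})^2.
\]
This is at most $(D+R)^2A^2\log(N)$ as soon as $\log N\ge (b^*_{\max})^2$, using $A\ge1$. Since the statement's hypotheses do not involve $N$, we read it, as in the remark after \eqref{eq:bhattronc}, as holding for $N$ sufficiently large. If $b_k^*$ has to be taken with $\|b_k^*\|_\infty\le 1$, the constraint is immediate.

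The only delicate point is this constraint membership, together with handling the clamped (repeated) boundary knots so that $\tau_j$ really lies in the support of $B_j$ inside $[-A,A]$. Everything else follows from positivity, the partition of unity and local support of the B-splines.
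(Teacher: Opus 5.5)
Your proof is correct and is essentially the paper's argument. The paper uses the quasi-interpolant $\sum_{j_1,j_2} b_k^*(u_{j_1},u_{j_2})B_{j_1,j_2}$, with the knots $u_j$ as a particular case of your points $\tau_j$, concludes via positivity, the partition of unity, local support and the Lipschitz bound, and then checks the $\ell^2$ constraint for $N$ large. The only cosmetic difference is in that last check: you bound the coefficients by $b^*_{\max}$, while the paper uses the growth bound $|b_k^*(x,y)|\le L(|x|+|y|)+|b_k^*(0,0)|\le A\sqrt{\log(N)}$; both need $N$ sufficiently large, as you noted.
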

The proof of this result is given in Section~\ref{proof:propsplineapprox}.
In particular we can see that, as expected, this error decreases as $D$ increases. 

\paragraph{Ridge estimator of the drift functions.}

Let $k \in \mathcal{Y}$. To estimate the bivariate function $b_k^*$, we consider the learning sample $\mathcal{D}_N^{[k]}$ defined in Equation~\eqref{eq:data-discrete} composed of two independent IPS
$\overline{X}^{[k],n}_{1\leq n \leq N}$, and $\overline{\widetilde{X}}^{[k],n}_{1 \leq n  \leq N}$.
The choice of the contrast relies on the following observation.
Conditional on the event $\{Y=k\}$, for each $1 \leq n \leq N$, the increments
\begin{equation*}
U_{m\Delta}^{[k],n} := \frac{X_{(m+1)\Delta}^{[k],n} - X_{m\Delta}^{[k],n}}{\Delta}, \quad 0 \leq m \leq M - 1,
\end{equation*}
can be decomposed in $\displaystyle
\frac{1}{N} \sum_{n' = 1}^N b_k^*(X_{m\Delta}^{[k],n}, {X}_{m\Delta}^{[k],n'}) 
$ plus additional terms to control.
Hence, our estimation problem can be viewed as a regression problem {\it w.r.t.} the variables $(U^{[k,n]}_{m\Delta})_{0\leq m \leq M-1}$, $n=1, \ldots,N$.
More precisely, for each $k \in \mathcal{Y}$, 
the \( B \)-spline-based estimator $\wt{b}_k$ of $b^*_k$ is defined as a minimizer of the following empirical least squares contrast, over the constrained set $\mathcal{S}_{D,R}$:
\begin{equation}\label{eq:btilde}  
\wt{b}_k = \argmin{h \in \mathcal{S}_{D,R}}{\gamma^{[k]}_{N,M}(h)}, \quad 
\gamma^{[k]}_{N,M}(h):= \dfrac{1}{NM}\sum_{m=0}^{M-1} \sum_{n=1}^N \left(U_{m\Delta}^{[k],n} - \frac{1}{N} \sum_{n'=1}^N h(X_{m\Delta}^{[k],n}, {\wt{X}_{m\Delta}^{[k],n^{'}}})\right)^2.
\end{equation}
Moreover, according to the definition of the space of approximation $\mathcal{S}_{D,R}$ given in \eqref{eq:SDR} we can  also write
\begin{equation}\label{eq:bspline-estim}
   \wt{b}_k(x, y) := \sum_{j_1 = -R}^{D-1} \sum_{j_2 = -R}^{D-1} \w{a}_{(j_1, j_2)}^{[k]} B_{j_1}(x) B_{j_2}(y), 
\end{equation}
where the coefficients of the decomposition are given by the matrix \( \widehat{\mathbf{A}}^{[k]} = (\widehat{a}_{(j_1, j_2)}^{[k]})_{-R \leq j_1, j_2 \leq D-1} \). This is the solution of the constrained optimization problem:
\begin{equation}\label{eq:bspline-coeff}
    \w{\mathbf{A}}^{[k]}=\!\!\argmin{\substack{\mathbf{A}^{[k]}\in \R^{D+R}\otimes\mathbb{R}^{D+R}\:\text{s.t.}\\\Vert \mathbf{A}^{[k]}\Vert_{F}^2\leq (D+R)^2 A^2 \log(N)}} \!\!\frac{1}{NM}\sum_{m=0}^{M-1}\sum_{n=1}^{N}\Big(U_{m\Delta}^{[k],n}-\frac{1}{N}\sum_{n'=1}^{N}\sum_{j_1,j_2=-R}^{D-1}a^{[k]}_{(j_1,j_2)}B_{j_1}\big(X_{m\Delta}^{[k], n}\big)B_{j_2}\big(\wt{X}_{m\Delta}^{[k], n'}\big)\Big)^2
\end{equation}
Rewriting the increment vector \( \mathbf{U} \) as:
\[
\mathbf{U} = \big(U_{0\Delta}^{[k], 1}, \ldots, U_{0\Delta}^{[k], N}, U_{1\Delta}^{[k], 1}, \ldots, U_{1\Delta}^{[k], N}, \ldots, U_{(M-1)\Delta}^{[k], 1}, \ldots, U_{(M-1)\Delta}^{[k], N}\big) \in \mathbb{R}^{MN},
\]
and defining the matrix \( \mathbf{B} :=\mathbf{B}_D= \big( \mathbf{B}_{(j_1, j_2), (m, n)} \big)_{-R \leq j_1, j_2 \leq D-1, \; 0 \leq m \leq M-1, \; 1 \leq n \leq N} \) of size \( MN \times (D+R)^2 \) as:
\begin{equation}\label{eq:B}
\mathbf{B}_{(j_1, j_2), (m, n)} = B_{j_1}\big(X_{m\Delta}^{[k], n}\big) \left( \frac{1}{N} \sum_{n'=1}^N B_{j_2}\big(\wt{X}_{m\Delta}^{[k], n'}\big) \right),
\end{equation}
rewritten as a vector of dimension \( (D+R)^2 \):
\[
\w{\mathbf{a}} = \big(\w{a}_{(j_1, j_2)}\big)_{-R \leq j_1, j_2 \leq D-1} = \big(\w{a}_{(1, 1)}, \w{a}_{(1, 2)}, \ldots, \w{a}_{(D+R, D+R)}\big) \in \R^{(D+R)^2},
\]
is the solution of the ridge optimization problem:
\[
\w{\mathbf{a}} = \argmin{\|\mathbf{a} \|_2^2 \leq (D+R)^2 A^2 \log(N)} \big\| \mathbf{U} - \mathbf{B} \mathbf{a} \big\|_2^2.
\]
Finally, rather than $\wt{b}$ give in Equation~\eqref{eq:btilde}, we consider a thresholded version also denoted $\w{b}$ defined as follows
\begin{equation}\label{eq:estimatorbhatfinal}
\w{b}_k(\cdot):=\wt{b}(\cdot) \one_{|\wt{b}(\cdot)| \leq  \log^{1/2}(N)} + \text{sgn}(\wt{b})  \log^{1/2}(N) \one_{\{|\wt{b}(\cdot)| > \log^{1/2}(N)\}},
\end{equation}
analogously as the formula \eqref{eq:bhattronc}.
Next, we focus on analyzing the risk of the estimator.

\subsection{Convergence rate of the drift estimator}
\label{subsec:TheoPropertiesEstimator}

This section is devoted to the study of the rates of convergence of the estimators $\w{b}_k, k \in \cY$.
We assume that the regularity parameter $R$ is fixed. However, we assume that the dimension parameter depends on $N$ and satisfies $D:=D_N \rightarrow +\infty$ as the sample size $N$ tends to infinity. Furthermore, we consider $A:=A_N= \log(N)$. Hence, for each $k \in \mathcal{Y}$, the estimators $\w{b}_k$ are non negative on $[-A_N, A_N]$.
The choice of $A_N = \log(N)$ is motivated by the followings.
First, from Proposition~\ref{prop:splineapprox}, on $[-A_N,A_N]$,
we have that there exists $\mathfrak{b}_k \in \mathcal{S}_{D_N, R}$ such that
\begin{equation*}
\left|\mathfrak{b}_k(x,y)-b_k^*(x,y) \right| \leq \mathfrak{C} \dfrac{\log(N)}{D_N} \rightarrow 0, \;\; N \rightarrow +\infty,    
\end{equation*}
provided that $\log(N) =o(D_N)$. Hence, the set $\mathcal{S}_{D_N,R}$ provides a good approximation of the drift functions ${b}^*_k$ on $[-A_N,A_N]$. Besides, with $A_N = \log(N)$, we have that (see Lemma~\ref{lem:ineq-concentration-Xt}, in Section~\ref{proofs:technical})
\begin{equation*}
 \sup_{t \in (0,1)} \mathbb{P}\left(\left|\xi_t^{[k]}\right|\geq A_N\right)   \leq \dfrac{\mathfrak{C}}{N}. 
\end{equation*}
Therefore, leveraging this result, one can show that the global bias error on $\mathbb{R}^2$, which is related to the approximation of $b_k^*$ by an element of $\mathcal{S}_{D_N,R}$, is a $o(1)$ 
provided that $\log(N) =o(D_N)$.
In particular, we establish the following result.
\begin{theo}\label{theo:riskbhatMkk}
Grant Assumption \ref{ass:bsup_et_lip_in_x_and_y}.
For each $k \in \mathcal{Y}$, the estimator $\w{b}_k$ of $b_k^*$ given in Equation~\eqref{eq:estimatorbhatfinal} satisfies
\begin{equation*}
\E\left[ \|\w{b}_k-{b}^*_k\|^2_{M,k,k}\right] 
\leq 
\mathfrak{C} \left( \frac{\log^2(N)}{D_N^2} + \log^{3/2}(N)\sqrt{\frac{(D_N+R)^2}{N}}+ \Delta  \right).
\end{equation*}
\end{theo}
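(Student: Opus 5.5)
We follow the standard least-squares route of \cite{DDM2019,DDMVC2022}: compare $\w{b}_k$ with the spline approximant $\mathfrak{b}_k$ of Proposition~\ref{prop:splineapprox}, and control the remaining terms through propagation of chaos. First replace the $k'=k$ norm by its empirical counterpart on the training sample. For $h$ bounded, set
\[
\|h\|_{n}^2:=\frac{1}{NM}\sum_{m,n}\Big(\frac1N\sum_{n'}h(X^{[k],n}_{m\Delta},\wtX^{[k],n'}_{m\Delta})\Big)^2 .
\]
By Proposition~\ref{prop:chaos}(b) with $\ell=1$, the law of $X^{[k],n}$ is within $\mathfrak{C}/N$ in total variation of $\mu^{[k]}$, and $\xi^{[k]}$ is independent of $\wtX$. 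So for any fixed bounded $h$, the difference between $\E\|h\|_n^2$ and $\|h\|^2_{M,k,k}$ is at most $\mathfrak{C}\|h\|_\infty^2/N$. Because $\wtX$ is a second, independent system, $\frac1N\sum_{n'}h(x,\wtX^{[k],n'})$ is a function of $x$ independent of the $X$-sample, and this is why two samples are needed.

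Next write the regression decomposition. We have $U^{[k],n}_{m\Delta}=\frac1N\sum_{n'}b_k^*(X^n_{m\Delta},X^{n'}_{m\Delta})+R^{n}_m+\frac{1}{\Delta}(W^n_{(m+1)\Delta}-W^n_{m\Delta})$. The remainder $R^n_m$ collects two pieces:
\begin{itemize}
\item the time-discretization error, of order $\sqrt\Delta$ in $L^2$ by the Lipschitz property and bounded drift;
\item the swap of the interaction term from $X^{n'}$ to $\wtX^{n'}$, controlled in $L^2$ by $L\,\E\mathcal W_1(\mu^N_t,\wt\mu^N_t)$, hence of order $N^{-1/4}$ by Proposition~\ref{prop:chaos}(a) with $p=2$.
\end{itemize}
Since $\wt b_k$ minimizes $\gamma_{N,M}$ and $\mathfrak{b}_k\in\mathcal{S}_{D,R}$ (the coefficient constraint is met because $\|b^*_k\|_\infty\le b^*_{\max}\le A_N$), the basic inequality gives
\[
\|\wt b_k-b_k^*\|_n^2\le 3\|\mathfrak{b}_k-b_k^*\|_n^2+\mathfrak{C}\,\frac1{NM}\sum |R^n_m|^2+2\,\nu_N(\wt b_k-\mathfrak{b}_k),
\]
with martingale term $\nu_N(h)=\frac1{NM}\sum_{m,n}\frac1N\sum_{n'}h(X^n_{m\Delta},\wtX^{n'}_{m\Delta})\,(W^n_{(m+1)\Delta}-W^n_{m\Delta})/\Delta$.

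The bias term is handled by Proposition~\ref{prop:splineapprox} on $[-A_N,A_N]^2$, giving $\mathfrak{C}\log^2N/D_N^2$. Off this box the error is bounded by $\mathfrak{C}\log N$ times $\P(|\xi_t|\ge A_N)\le \mathfrak{C}/N$ from Lemma~\ref{lem:ineq-concentration-Xt}, transferred to particles by total variation.

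The main obstacle is the stochastic term $\E\sup_{h}\nu_N(h)$ over the ball of $\mathcal S_{D,R}$, since the Brownian increments are correlated with the $X^n$ through the interaction. I would write $h=\sum a_jB_j$ and apply Cauchy--Schwarz in $\mathbf a$:
\[
\sup\nu_N\le \|\mathbf a\|_2\,\Big\|\Big(\frac{1}{NM}\sum_{m,n}\mathbf B_{j,(m,n)}\,\Delta W^n_m/\Delta\Big)_j\Big\|_2 .
\]
Here $\|\mathbf a\|_2\le (D+R)A_N\log^{1/2}N$. Each coordinate has a factor $\mathbf B_{j,(m,n)}$ that is $\mathcal F_{m\Delta}$-measurable (taking the joint filtration of both systems), so it is a martingale sum. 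Its second moment is therefore $\frac{1}{N^2M^2\Delta}\sum_{m,n}\E\mathbf B_{j,(m,n)}^2$. Using partition of unity, summing over $j$ gives at most $1/N$. This yields $\E\sup\nu_N\le \mathfrak{C}(D+R)\log^{3/2}N/\sqrt N$, which is the middle term of the statement.

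Finally:
\begin{itemize}
\item the truncation \eqref{eq:estimatorbhatfinal} only decreases the pointwise error, because $|b^*_k|\le\log^{1/2}N$;
\item passing from $\E\|\w b_k-b^*_k\|^2_n$ back to $\|\cdot\|^2_{M,k,k}$ for the random $\w b_k$ is done uniformly over the class; with sup-norm $\log^{1/2}N$ the total-variation cost is $\mathfrak{C}\log N/N$, which is negligible;
\item the remainder from $R^n_m$ contributes $\Delta$ plus an $N^{-1/2}$ term, both dominated.
\end{itemize}

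The delicate point I expect is making the chaos-based transfer uniform over $h$ despite $\w b_k$ depending on the $X$-sample. I would first try a conditioning on $\wtX$ combined with the $\ell$-particle total-variation bound, and fall back on a covering argument for $\mathcal S_{D,R}$ if that fails.
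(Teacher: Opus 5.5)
\emph{Verdict: there is a genuine gap in the last step.}

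Up to the bound on the empirical norm $\|\wt b_k-b_k^*\|_{N,M,k}$, your argument is the paper's Proposition~\ref{prop:estimbresult}. It uses the same basic inequality, the same Cauchy--Schwarz in the coefficients, the same partition-of-unity computation giving $1/N$ for the martingale coordinates (Lemma~\ref{lem:contrast}), and the same bias treatment. Your one-particle total-variation transfer is also fine for the \emph{fixed} approximant $\mathfrak{b}_k$.

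The gap is the passage from $\|\w b_k-b_k^*\|_{N,M,k}$ to $\|\w b_k-b_k^*\|_{M,k,k}$, where you claim a total-variation cost of $\mathfrak{C}\log N/N$. Proposition~\ref{prop:chaos}(b) bounds
$\sup_{\|\phi\|_\infty\le c}\bigl|\E\phi(X^{[k],1},\dots,X^{[k],\ell})-\int\phi\,\dd(\mu^{[k]})^{\otimes\ell}\bigr|$.
This is a supremum \emph{outside} the expectation, over test functions that do not depend on the sample. Even after conditioning on $\wtX$, the function $x\mapsto\bigl(\frac1N\sum_{n'}(\w b_k-b_k^*)(x,\wtX^{[k],n'}_{m\Delta})\bigr)^2$ depends on all of $X^{[k],1},\dots,X^{[k],N}$ through $\w b_k$. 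What you must control is therefore a generalization gap, $\E\sup_{\bar h\in\mathcal H_{D_N,R}}\{\cdots\}$ with the supremum inside. Its size is governed by the complexity of the class (of order $(D_N+R)^2\log^2(N)/N$ in the paper), not by the $O(\log N/N)$ bias of a single fixed function.

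Your first fallback, the $\ell$-particle TV bound, cannot supply this either. It only compares expectations. Applied to $2p$-th moments of $\frac1N\sum_n\phi(X^{[k],n}_t)$, it leaves an error of order $\|\phi\|_\infty^{2p}p/N$, so it yields no exponential tails. Exponential tails are exactly what a union bound over a net of log-cardinality $\asymp(D_N+R)^2\log N$ requires.

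What is missing is a concentration inequality for the dependent particles of the $X$-system. The paper proceeds as follows.
\begin{enumerate}
\item[(i)] Write $\E\|\w b_k-b_k^*\|^2_{M,k,k}\le 2\E\|\w b_k-b_k^*\|^2_{N,M,k}+\E\sup_{\bar h\in\mathcal H_{D_N,R}}\{\|\bar h-b_k^*\|^2_{M,k,k}-2\|\bar h-b_k^*\|^2_{N,M,k}\}$.
\item[(ii)] Condition on $\wtX$; this is where the second system is really used, since each $g$ then becomes a deterministic function for the $X$-system.
\item[(iii)] Apply to each element of a sup-norm $\varepsilon$-net of $\mathcal H_{D_N,R}$ the Bernstein inequality for McKean--Vlasov particle systems (Theorem~18 of \cite{hoffmann2021}, with $\rho$ uniform on the grid).
\item[(iv)] Because $0\le g\le 4\log N$, one has $\E g^2\le 4\log N\,\E g$, and the factor $2$ turns Bernstein into the tail $\exp(-CNx/\log N)$.
\item[(v)] The net has log-cardinality $\lesssim(D_N+R)^2\log N$, since truncation is $1$-Lipschitz and so maps a net of $\mathcal S_{D_N,R}$ onto one of $\mathcal H_{D_N,R}$.
\item[(vi)] Integrating the tail gives an extra $\mathfrak C(D_N+R)^2\log^2(N)/N$. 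This is dominated by the middle term of the statement, and the bound is trivial when $(D_N+R)^2\log N>N$.
\end{enumerate}
Your covering fallback is thus the actual proof, but it only works once this particle Bernstein inequality is brought in.

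A smaller point, which the paper's proof shares: the pointwise bound $|\w b_k-b_k^*|\le|\wt b_k-b_k^*|$ does not give $\|\w b_k-b_k^*\|_{N,M,k}\le\|\wt b_k-b_k^*\|_{N,M,k}$. These norms square an average over $n'$, and clipping can destroy cancellations. For instance, with $b_k^*=1$ and threshold $2$, errors $10$ and $-10$ (mean $0$) become $1$ and $-3$ (mean $-1$). The contrast only sees such $y$-averages, so $\wt b_k$ may well exhibit them. Imposing $\max_j|a_j|\le\log^{1/2}N$ in the constraint set, instead of truncating afterwards, avoids this. By the partition of unity it forces $\|h\|_\infty\le\log^{1/2}N$, and $\mathfrak b_k$ stays admissible.
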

The proof of this result is given in Section~\ref{proof:secdrift}. 
The bound in Theorem~\ref{theo:riskbhatMkk} decomposes into three terms: a bias term, an estimation error term, and a discretization error term.
The proof of this result relies on the following steps. For each $k \in \mathcal{Y}$, we first obtain a control on the empirical norm that stands for the empirical counterpart of the $\left\|\cdot\right\|_{M,k,k}$ based on the learning sample $\mathcal{D}_N^{[k]}$. 
Then, a concentration argument is used to extend the control on the empirical norm to the semi-norm $\left\|\cdot\right\|_{M,k,k}$.
This step relies on the Bernstein inequality established 
in~\cite{hoffmann2021}. 
More precisely, we consider a second system and adopt a two-sample strategy to apply standard concentration results developed in~\cite{hoffmann2021}.
If only one system is observed, the definition of the norm in Equation~\eqref{eq:normMkk} would change, with $\tilde{X}$ replaced by $X$ in the second coordinate to estimate the empirical measure. Controlling this new norm would then be challenging, since the nonparametric estimator would depend on the same sample.
An alternative strategy might be possible, for instance by following \cite{belopodol} and using concentration techniques based on $U$-statistics. However, this more technical approach would likely require additional assumptions on the initial condition of the model.


Now, let us leverage the result obtained in Theorem~\ref{theo:riskbhatMkk} to get a rate of convergence with respect to the integrated norm defined for each $k, k'$ and some measurable function $h$ as
\begin{equation*}
\Vert h \Vert_{M, k, k', \star}^2\coloneqq \E\left[ \frac{1}{M}\sum_{m=0}^{M-1} \left(\int_{\R} h(\xi_{m\Delta}^{[k']}, y)\mu_{m\Delta}^{[k]}(dy)\right)^2\right]. 
\end{equation*}
Let us emphasize that in this expression the measure $\mu$ is associated with the limit equation~\eqref{eq:mkv} for class $k$, whereas the first argument $\xi$ comes from a limit particle but from class $k'$ that may be different from $k$.
To this extent, we establish first the following result that links the integrated norm to the norm $\left\|\cdot \right\|_{M,k,k'}$ given in Equation~\eqref{eq:normMkk}.
\begin{lemma}
\label{lem:boundIntegrated}
Grant Assumption~\ref{ass:bsup_et_lip_in_x_and_y}.
For all $k,k'\in\mathcal{Y}$, it holds that for $D_N \leq \sqrt{N}$, 
\begin{equation*}
\left|\E\left[\| \w{b}_k-b_k^* \|_{M, k, k', \star}^2\right] - \E\left[\| \w{b}_k-b_k^* \|_{M, k, k'}^2\right]\right|
\leq \mathfrak{C} \dfrac{\log(N)D_N}{\sqrt{N}}.
\end{equation*}
\end{lemma}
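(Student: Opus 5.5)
Set $h:=\w{b}_k-b_k^*$, which is random through the learning sample. The key observation is that $\w{b}_k$ is built from $\mathcal{D}_N^{[k]}$ only through the contrast~\eqref{eq:btilde}, which uses both systems $X^{[k]}$ and $\wt X^{[k]}$. So I would first condition on $\w b_k$ (equivalently on the full sample) and compare, for each fixed $m$ and with $x=\xi^{[k']}_{m\Delta}$ independent of the training data, the two quantities
\[
\Big(\tfrac1N\textstyle\sum_{n'} h(x,\wt X^{[k],n'}_{m\Delta})\Big)^2
\quad\text{and}\quad
\Big(\textstyle\int h(x,y)\,\mu^{[k]}_{m\Delta}(\dd y)\Big)^2 .
\]
Their difference factorizes as $(a-b)(a+b)$, where $a$ and $b$ are the empirical and integrated averages. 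The truncation~\eqref{eq:estimatorbhatfinal} and the bound $b^*_{\max}$ give $|a+b|\le \mathfrak{C}\log^{1/2}(N)$. Hence the difference of the two expected norms is bounded by
\[
\mathfrak{C}\log^{1/2}(N)\,\sup_m\E\Big|\tfrac1N\textstyle\sum_{n'}h(x,\wt X^{[k],n'}_{m\Delta})-\int h(x,y)\,\mu^{[k]}_{m\Delta}(\dd y)\Big|.
\]
The task is therefore to bound this uniform empirical-measure deviation for the data-dependent function $h$.

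Next I split $h=\wt b_k^{\rm trunc}-b_k^*$ and handle the two parts separately. For $b_k^*$, which is Lipschitz in $y$, the estimate~\eqref{eq-lip-wasserstein} together with Proposition~\ref{prop:chaos}(a) gives the bound $L\,\E[\mathcal W_1(\mu^{[k],N}_{m\Delta},\mu^{[k]}_{m\Delta})]\le \mathfrak{C}N^{-1/2}$; here Jensen is applied with $p=2$, or one can use the $\mathcal W_1$ rate directly. For the estimator part, the data dependence is the difficulty. I would remove it by a uniform bound over the constrained spline class $\mathcal S_{D,R}$. Every $s\in\mathcal S_{D,R}$ is $s(x,y)=\sum_{j_2}c_{j_2}(x)B_{j_2}(y)$ with $c_{j_2}(x)=\sum_{j_1}a_{(j_1,j_2)}B_{j_1}(x)$. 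Using that the $B_{j_1}$ form a partition of unity and that $\|{\bf a}\|_2\le (D+R)A\log^{1/2}(N)$, one gets $|c_{j_2}(x)|\le \max_{j_1}|a_{(j_1,j_2)}|$. This yields
\[
\sup_{s}\Big|\tfrac1N\textstyle\sum_{n'} s(x,\wt X^{n'})-\int s\,\dd\mu\Big|
\le \textstyle\sum_{j_2}|c_{j_2}(x)|\,\Big|\tfrac1N\sum_{n'}B_{j_2}(\wt X^{n'}_{m\Delta})-\int B_{j_2}\,\dd\mu^{[k]}_{m\Delta}\Big| .
\]
Applying Cauchy--Schwarz over $j_2$ bounds this by $\|{\bf a}\|_2$ times the $\ell^2$-norm of the vector of deviations. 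Each $B_{j_2}$ is bounded by $1$ and Lipschitz with constant of order $D/A$. Since the support of $B_{j_2}$ has size $A/D$, the variances sum to $O(1)$, because $\sum_{j_2} B_{j_2}^2\le 1$.

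To control the deviations of $\frac1N\sum_{n'}B_{j_2}(\wt X^{n'})$ I would not use independence, since the particles interact. Instead I would couple each particle with its McKean--Vlasov copy driven by the same Brownian motion, as in Proposition~\ref{prop:chaos}(a). This gives one error term of Lipschitz type, of order $(D/A)N^{-1/2}$, and one term that is an i.i.d. sum with variance of order $\frac1N\sum_{j_2}\E B_{j_2}^2=O(1/N)$. Combining, the empirical sum is of order $\|{\bf a}\|_2 N^{-1/2}\approx D\log(N)^{3/2}N^{-1/2}$.

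The truncation is nonlinear, but $|\mathrm{trunc}(u)|\le|u|$ and truncation is $1$-Lipschitz. I would handle it by bounding the truncated-versus-untruncated discrepancy on the event $\{|\wt b_k|>\log^{1/2}N\}$, or more simply by applying the above to $\wt b_k$ before truncation and noting that $b_k^*$ is untouched by truncation for large $N$. The hypothesis $D_N\le\sqrt N$ keeps the Lipschitz coupling term $D N^{-1/2}$ of lower order.

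The main obstacle is making the powers of $\log N$ match the stated $\log(N)D_N/\sqrt N$. The crude bound above gives an extra factor of $\log^{1/2}$ from $|a+b|$ and further factors from $A$ and the norm constraint. I would try to avoid the $|a+b|$ factor by writing $a^2-b^2=(a-b)^2+2b(a-b)$, and then use the independence of $\xi^{[k']}$ from $\wt X$ together with a sharper, variance-based (Bernstein-type, as in~\cite{hoffmann2021}) deviation for the $B$-spline averages. If the logarithmic factors cannot all be removed, the resulting bound would still be of the form polylog$(N)\,D_N/\sqrt N$, and I would adjust constants accordingly.
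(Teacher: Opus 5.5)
Your first step matches the paper's. You factor $a^2-b^2=(a-b)(a+b)$ and use $|a+b|\le \mathfrak{C}\log^{1/2}(N)$, which relies on the truncation. Because $\w{b}_k$ is built from the same system $\wt{X}^{[k]}$ that appears in $\|\cdot\|_{M,k,k'}$, you then pass to a supremum over a deterministic class. The gap is in how you control that supremum.

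\textbf{Gap 1: the truncation breaks your linear reduction.} The representation $s(x,y)=\sum_{j_2}c_{j_2}(x)B_{j_2}(y)$ holds for elements of $\mathcal{S}_{D_N,R}$. But the function in the lemma is $\w{b}_k=T\circ\wt{b}_k$, with $T$ the clipping at $\pm\log^{1/2}(N)$, and $T\circ\wt{b}_k(x,\cdot)$ is not in the span of the $B_{j_2}$. Neither of your fixes works.
\begin{itemize}
\item That $T$ is $1$-Lipschitz pointwise gives no control of $\frac1N\sum_{n'}(T\circ f)(x,\wt{X}^{[k],n'}_{m\Delta})-\int (T\circ f)(x,y)\,\mu^{[k]}_{m\Delta}(\dd y)$ in terms of the same deviation for $f$. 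So you cannot simply work with $\wt{b}_k$.
\item The correction $T\circ\wt{b}_k-\wt{b}_k$ is again a nonlinear, data-dependent function, with sup norm only bounded by $\|\mathbf{a}\|_2\lesssim (D_N+R)A_N\log^{1/2}(N)$. Controlling its deviation needs the same kind of uniform covering argument, with a worse sup-norm bound. Bounding its two averages separately would instead require knowing that $\wt{b}_k$ is rarely large, which is the accuracy you are trying to transfer.
\end{itemize}

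\textbf{Gap 2: the coupling step loses a factor $D_N$, even for untruncated splines.} The $B_{j_2}$ have Lipschitz constant $\asymp D_N/A_N$. Write $\delta_{j_2}$ for the deviation of the empirical average of $B_{j_2}$. The coupling part of $(\E\sum_{j_2}\delta_{j_2}^2)^{1/2}$ is then of order $(D_N/A_N)N^{-1/2}$. Even this needs $\E|\wt{X}^{[k],n'}_t-\xi^{n'}_t|^2\lesssim N^{-1}$, which is stronger than the $L^1$ statement in Proposition~\ref{prop:chaos}(a).

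By your own Cauchy--Schwarz step, this term is multiplied by $\|\mathbf{a}\|_2\lesssim (D_N+R)A_N\log^{1/2}(N)$, not added to it. The contribution to the lemma is therefore of order $D_N^2\log(N)/\sqrt{N}$. With $D_N\propto N^{1/6}$ this is $N^{-1/6}$ instead of $N^{-1/3}$, which would destroy Corollary~\ref{coro:RateIntegratedNorm}. The hypothesis $D_N\le\sqrt{N}$ does not make this term lower order.

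\textbf{How the paper avoids both issues.} It never uses linear structure or Lipschitz coupling.
\begin{itemize}
\item It covers the truncated class $\mathcal{H}_{D_N,R}$ in sup norm. An $\varepsilon$-net of $\mathcal{S}_{D_N,R}$ maps under $T$ to an $\varepsilon$-net of $\mathcal{H}_{D_N,R}$, so $\log\mathcal{N}_\varepsilon\lesssim D_N^2\log(N)$ for $\varepsilon\asymp D_NA_N\log(N)/N$.
\item Conditionally on $\xi^{[k']}_{m\Delta}$, it applies the Bernstein inequality of \cite[Theorem~18]{hoffmann2021} to each $\bar h-b_k^*$ in the net, using $\|\bar h-b_k^*\|_\infty\le 2\log^{1/2}(N)$. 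That inequality holds for bounded measurable functions of the interacting system, so neither a Lipschitz property nor a coupling is needed.
\item A union bound and integration of the tail give $\sqrt{\log(\mathcal{N}_\varepsilon)\log(N)/N}$, plus a sub-exponential term of order $\log(\mathcal{N}_\varepsilon)\log^{1/2}(N)/N\lesssim D_N^2\log^{3/2}(N)/N$. This last term is the only place where $D_N\le\sqrt{N}$ is used.
\end{itemize}
Applying the same inequality to each $B_{j_2}$ would repair your second gap, giving $\E\sum_{j_2}\delta_{j_2}^2\lesssim N^{-1}+D_NN^{-2}$. It would not repair the first.
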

Combining Lemma~\ref{lem:boundIntegrated} for $k'=k$, with Theorem~\ref{theo:riskbhatMkk} yields the convergence rate of our drift estimator in term of integrated norm, which does not depend on the learning sample. 
\begin{corollary}
\label{coro:RateIntegratedNorm}
Grant Assumption~\ref{ass:bsup_et_lip_in_x_and_y}.
Choosing $\Delta =  O(1/N)$, and $D_N  \propto N^{1/6}$, we then obtain, up to some logarithmic factors, that for each $k \in \mathcal{Y}$
\begin{equation*}
\E\left[ \|\w{b}_k-{b}^*_k\|_{M,k,k, \star}\right] \propto  N^{-1/6}.   
\end{equation*}
\end{corollary}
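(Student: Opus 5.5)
The corollary follows by combining Lemma~\ref{lem:boundIntegrated} (with $k'=k$) and Theorem~\ref{theo:riskbhatMkk}, passing from the squared norm to the norm with Jensen's inequality, and then balancing the terms through the choice of $D_N$.

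\textbf{Step 1 (squared integrated norm).} Write $e_\star := \E\big[\|\w{b}_k-b_k^*\|^2_{M,k,k,\star}\big]$. By the triangle inequality,
\begin{equation*}
e_\star \leq \E\big[\|\w{b}_k-b_k^*\|^2_{M,k,k}\big] + \Big|e_\star - \E\big[\|\w{b}_k-b_k^*\|^2_{M,k,k}\big]\Big|.
\end{equation*}
The second term is at most $\mathfrak{C}\log(N)D_N/\sqrt{N}$ by Lemma~\ref{lem:boundIntegrated}, which applies because $D_N\propto N^{1/6}\leq\sqrt N$ for $N$ large. The first term is bounded by Theorem~\ref{theo:riskbhatMkk}. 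Together these give
\begin{equation*}
e_\star \leq \mathfrak{C}\Big(\frac{\log^2 N}{D_N^2}+\log^{3/2}(N)\frac{D_N+R}{\sqrt N}+\frac{\log(N) D_N}{\sqrt N}+\Delta\Big).
\end{equation*}

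\textbf{Step 2 (choice of parameters).} Since $R$ is fixed, $D_N+R\leq 2D_N$ for $N$ large, so up to logarithmic factors the bound reads $D_N^{-2}+D_N N^{-1/2}+\Delta$. The choice $D_N\propto N^{1/6}$ equates the first two terms, each of order $N^{-1/3}$. The choice $\Delta=O(1/N)$ makes the discretization term $O(N^{-1})$, which is negligible. The choice is also admissible for the bias analysis, since it satisfies $\log N=o(D_N)$. Hence $e_\star\lesssim N^{-1/3}$ up to logarithmic factors.

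\textbf{Step 3 (from the squared norm to the norm).} Jensen's inequality gives
\begin{equation*}
\E\big[\|\w{b}_k-b_k^*\|_{M,k,k,\star}\big]\leq \sqrt{e_\star}\lesssim N^{-1/6},
\end{equation*}
which is the stated rate.

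The only delicate point is bookkeeping. We must check that the constraint $D_N\leq\sqrt N$ of Lemma~\ref{lem:boundIntegrated} holds and that the constants do not depend on $D_N$. We must also confirm that the cross term $\log(N)D_N/\sqrt N$ from the lemma has the same order as the estimation term of Theorem~\ref{theo:riskbhatMkk}, so that it does not degrade the balance. The symbol $\propto$ is understood as an upper bound of this order up to logarithmic factors. A matching lower bound is not claimed.
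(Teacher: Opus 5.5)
Your proposal is correct and follows the paper's proof essentially verbatim. Like the paper, you split the squared integrated norm by the triangle inequality, bound the two pieces with Theorem~\ref{theo:riskbhatMkk} and Lemma~\ref{lem:boundIntegrated} (with $k'=k$), and then plug in $\Delta=O(1/N)$ and $D_N\propto N^{1/6}$ to get order $N^{-1/3}$ for the squared norm; your explicit Jensen step to reach $\E[\|\w{b}_k-b_k^*\|_{M,k,k,\star}]\lesssim N^{-1/6}$ fills in what the paper leaves implicit.
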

Finally, it is important to note that result of Theorem~\ref{theo:riskbhatMkk} and Corollary~\ref{coro:RateIntegratedNorm}  holds for the estimation of the function $b_k^*$ on $\mathbb{R}^2$ and is not only restricted to a compact set.  

Let us remind the reader that, at this point, the only assumption of regularity made on the drift function is that it is Lipschitz in the two coordinates. It is well known that in the usual nonparametric regression framework~\cite{tsybakov2008nonparametric,Gyorfi-2002a}, the optimal rate of convergence for bivariate function is of order $N^{-1/4}$. In our specific setting, we obtain a rate of convergence of order $N^{-1/6}$. This difference is due to the order of the variance term and illustrates the difficulty of the problem of estimating interaction function of an IPS. A specific comparison with the existing literature is provided in Remark~\ref{rem:mark}.

\begin{remark}[Comparison]\label{rem:mark}
To the best of our knowledge, the most recent nonparametric estimator related to the present framework is the one proposed in \cite{belopodol}. In this work, the authors consider an interaction function of the form $b(X_t^{n,N},X_t^{n',N})=\phi(X_t^{n,N}-X_t^{n',N})$ with $\phi$ bounded and propose an estimator of $\phi$. Similarly to~\cite{denis2021ridge}, their approach relies on the minimization of a least-squares contrast over a compact approximation space. The authors assume that the initial distribution is Gaussian and consider the continuous observations framework. In particular, they derive risk bounds with respect to the norm $\|\cdot\|_\star$, which can be viewed as a continuous counterpart of the norm $\left\|\cdot\right\|_{M,k,k, \star}$, that depends on the unknown measure does not readily translate into an $L_2$-norm bound.  In this setting, the variance term has the same order as in Theorem~\ref{theo:riskbhatMkk}.

Another point of comparison is the result obtained in~\cite{DDMVC2022} in the framework of univariate homogeneous diffusions. In this work, the authors propose a projection estimator of the drift function based on $B$-spline approximations. They establish a risk bound in which the variance term appears without the square root. However, this improvement is mainly due to the fact that the risk is measured with respect to a norm that is equivalent to the $L_2$-norm, which is not the case in our framework.
\end{remark}

\section{Convergence rates of plug-in classifiers}\label{sec:classiffinal}

In this section we derive the convergence rate for the proposed classifier $\w{g}$ based on the ridge estimators of the drift functions $b_k^*, k \in \mathcal{Y}$. 
In view of Theorem~\ref{theo:excessrisk}, the control of the excess risk relies on the control of the error $\E\left[ \|\w{{b}}_k -{b}_k^*\|_{M,k}\right]$, for each $k \in \cY$. However, it requires that for each $k \in \cY$, the estimator $\w{b}_k$ is consistent under the distribution $\mathbb{P}$ of $\xi^{[Y]}$.
Since $\w{b}_k$ are estimators of $b_k^*$ built from $\mathcal{D}_N^{[k]}$, we can only expect at first the consistency of $\w{b}_k$ {\it w.r.t.} $\mathbb{P}_k$, the distribution of $\xi^{[Y]}$ conditional on $\left\{Y=k\right\}$.
Hence, for each $k \in \cY$, we have to show that consistency {\it w.r.t.} 
$\mathbb{P}_k$ implies consistency {\it w.r.t.} $\mathbb{P}$. This is the purpose of the next result that relies on a martingale argument.

\begin{proposition}\label{prop:changementnorme}
Under Assumption~\ref{ass:bsup_et_lip_in_x_and_y}, 
for every couple $(k,k')\in \cY^2$ with $k \neq k'$, we have
\begin{equation*}
\E\left[\|\w{{b}}_k -{b}_k^*\|^2_{M,k,k'}\right] \leq C\,\exp(\sqrt{2c\log(N)})\left(\E \left[\left\Vert \w{b}_k-{b}_k^* \right\Vert^2_{M, k, k}\right]
+\frac{\log(N)}{\sqrt{N}} \right),
\end{equation*} 
where $C, c > 0$ and $C$ depends on $b^*_{\max}$.
\end{proposition}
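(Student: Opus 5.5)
The plan is a change of measure on path space. The norm $\|h\|^2_{M,k,k'}$ differs from $\|h\|^2_{M,k,k}$ only through the law of the first argument, $\xi^{[k']}$ instead of $\xi^{[k]}$. The estimator $\w{b}_k$ and the second system $\wt{X}^{[k]}$ are independent of the test path. So I would condition on $\mathcal{D}_N^{[k]}$ and write
\begin{equation*}
\|\w{b}_k-b_k^*\|^2_{M,k,k'}=\E\big[\Phi(\xi^{[k']})\big],\qquad \Phi(x):=\frac1M\sum_{m=0}^{M-1}\Big(\frac1N\sum_{n'=1}^N (\w{b}_k-b_k^*)(x_{m\Delta},\wt{X}^{[k],n'}_{m\Delta})\Big)^2 ,
\end{equation*}
where, conditionally on the data, $\Phi$ is a fixed nonnegative functional of the path. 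It is bounded by $\mathfrak{C}\log(N)$ because of the truncation \eqref{eq:estimatorbhatfinal}.

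Both $\xi^{[k]}$ and $\xi^{[k']}$ are Brownian motions with bounded, time-dependent but deterministic-measure drifts $\beta_j(t,x)=b_j^*[x,\mu^{[j]}_t]$. By Girsanov's theorem (Novikov holds trivially, since the drifts are bounded by $b^*_{\max}$), the law of $\xi^{[k']}$ has density $\mathcal{E}:=\exp(F^*_{k'}-F^*_k)$ with respect to the law of $\xi^{[k]}$. This is the same likelihood-ratio computation as in Proposition~\ref{prop:bayes}. Hence $\E[\Phi(\xi^{[k']})]=\E[\Phi(\xi^{[k]})\,\mathcal{E}(\xi^{[k]})]$.

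To bound this, I split on the event $G=\{\mathcal{E}\le e^{\sqrt{2c\log N}}\}$. On $G$ I get $\E[\Phi(\xi^{[k]})\mathcal{E}\one_G]\le e^{\sqrt{2c\log N}}\E[\Phi(\xi^{[k]})]$, and after integrating over the data this gives the main term $e^{\sqrt{2c\log N}}\E\|\w{b}_k-b_k^*\|^2_{M,k,k}$. On $G^c$ I use $\Phi\le\mathfrak{C}\log N$ together with Cauchy--Schwarz: $\E[\mathcal{E}\one_{G^c}]\le \E[\mathcal{E}^2]^{1/2}\P(G^c)^{1/2}$. The second moment $\E[\mathcal{E}^2]$ is bounded by a constant depending on $b^*_{\max}$, since the exponent has the form $\int \Delta\beta\,\dd W+O(1)$ with $\Delta\beta$ bounded. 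The log-likelihood $\log\mathcal{E}$ under $\P_k$ is a stochastic integral with bounded integrand plus a bounded term. By the exponential martingale inequality (or Bernstein for continuous martingales), this gives $\P(\log\mathcal{E}>u)\le e^{-(u-C)^2/(2c)}$ with $c=4 (b^*_{\max})^2$ or similar. Taking $u=\sqrt{2c\log N}$ (possibly with an additional constant absorbed into $C$) yields $\P(G^c)\lesssim N^{-1}$. Hence the $G^c$ part is $O(\log(N)N^{-1/2})$, which is dominated by $e^{\sqrt{2c\log N}}\log(N)/\sqrt N$. This is the martingale argument announced before the statement.

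The main obstacle is whether the quantity $\E\|\cdot\|^2_{M,k,k}$ appearing on the right really corresponds to $\E[\Phi(\xi^{[k]})]$ with the same $\wt X$-dependence. Here the independence of $\xi$ from the training sample is crucial, so that the conditioning argument is legitimate. The other delicate point is tuning the threshold so that the tail term is exactly of order $\log(N)/\sqrt N$ rather than worse. If the exponential tail estimate needs a shift, I would enlarge the threshold to $e^{C+\sqrt{2c\log N}}$ and absorb $e^{C}$ into the constant $C$.
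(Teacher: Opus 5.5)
Your proposal is correct. Its core is the same as the paper's argument:
\begin{itemize}
\item a Girsanov likelihood ratio between $\P_{k'}$ and $\P_k$;
\item a split according to whether the martingale $\int_0^{\cdot}\big(b_{k'}^*[\xi_s,\mu_s^{[k']}]-b_k^*[\xi_s,\mu_s^{[k]}]\big)\,\dd W_s$ exceeds $a=\sqrt{2c\log N}$;
\item Cauchy--Schwarz on the bad event, using the bounded second exponential moment;
\item the Gaussian-type tail of a continuous martingale with bracket bounded by $4b^{*2}_{\max}$ (the paper cites Lemma~2.1 of van de Geer).
\end{itemize}
Your worry about a shift in the tail is unnecessary, since $\log\mathcal{E}=\mathcal{M}_1-\tfrac12\langle\mathcal{M}\rangle_1\le\mathcal{M}_1$.

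The real difference is where the change of measure is applied.

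\textbf{The paper's route.} The paper applies it to the integrated norm $\|\cdot\|_{M,k,k',\star}$, whose inner measure is the deterministic $\mu^{[k]}_{m\Delta}$. It uses the density restricted to $\mathcal{F}_{m\Delta}$ at each grid time. It then has to return to $\|\cdot\|_{M,k,k'}$ and $\|\cdot\|_{M,k,k}$ via Lemma~\ref{lem:boundIntegrated}. Each such passage costs an error of order $\log(N)D_N/\sqrt N$, and one of these errors gets multiplied by $e^{a}$.

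\textbf{Your route.} You condition on $\mathcal{D}_N^{[k]}$, including the second system $\wt X^{[k]}$, which is independent of the test path. This makes the empirical-measure functional $\Phi$ itself a fixed nonnegative path functional bounded by $4\log N$. You then apply the full-path density $\exp(F^*_{k'}-F^*_k)$ directly. As a result you never need Lemma~\ref{lem:boundIntegrated}. You obtain
\[
\E\|\w b_k-b_k^*\|^2_{M,k,k'}\le e^{a}\,\E\|\w b_k-b_k^*\|^2_{M,k,k}+C\log(N)/\sqrt N,
\]
with a remainder that carries neither the $D_N$ factor nor an $e^{a}$ prefactor. Your route therefore reproduces the stated bound more faithfully than the detour through the $\star$-norm.

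\textbf{What the paper's route buys.} The detour yields, as a by-product, the comparison between the data-free integrated norms $\|\cdot\|_{M,k,k',\star}$ and $\|\cdot\|_{M,k,k,\star}$. That is the form discussed in the trade-off remark and used alongside Corollary~\ref{coro:RateIntegratedNorm}.
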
 
Since for each $\varepsilon > 0$, and $c> 0$, we have that
$\exp(\sqrt{2c\log(N)})= o(N^\varepsilon)$,
Proposition~\ref{prop:changementnorme} implies that for each $k \in \mathcal{Y}$, and $\varepsilon > 0$,
\begin{multline*}
\mathbb{E}\left[\left\|\w{b}_k-b_k^*\right\|^2_{M,k}\right]
= \dfrac{1}{K}\sum_{k' = 1}^K\mathbb{E}\left[\left\|\w{b}_k-b_k^*\right\|^2_{M,k,k'}\right] \leq \\ \dfrac{1}{K} \left(\mathbb{E}\left[\left\|\w{b}_k-b_k^*\right\|^2_{M,k,k}\right] + C(K-1)N^{\varepsilon}\left(\mathbb{E}\left[\left\|\w{b}_k-b_k^*\right\|^2_{M,k,k}\right] + \log(N) N^{-1/2} \right)\right) 
\end{multline*}
thus 
\begin{equation}\label{eq:normeMKcontrol}
\mathbb{E}\left[\left\|\w{b}_k-b_k^*\right\|^2_{M,k}\right]
 \leq 
C N^{\varepsilon}\left(\mathbb{E}\left[\left\|\w{b}_k-b_k^*\right\|^2_{M,k,k}\right] + \log(N) N^{-1/2}\right).
\end{equation}
Therefore, combining Equation~\eqref{eq:normeMKcontrol} with Proposition~\ref{prop:excessriskTerm13}, Theorem~\ref{theo:excessrisk}, and Theorem~\ref{theo:riskbhatMkk}, we establish the following result.
\begin{corollary}
\label{coro:excessRiskModel}
Let $D_N \propto N^{1/6}$, and $\Delta= O(1/N)$. 
Under Assumption~\ref{ass:bsup_et_lip_in_x_and_y}, for all $\varepsilon  > 0$, and $N$ large enough, the following holds
\begin{equation*}
\E\left[\left| \mathcal{R}_{(\bar{Z}^{[Y]})}(\w{g}) -  \mathcal{R}^* \right|\right]
\leq \mathfrak{C} \max\left(N^{-1/6+\varepsilon}, \Nt^{-1}\right).
\end{equation*}
\end{corollary}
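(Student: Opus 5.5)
The plan is to chain the already established bounds in the order suggested by the decomposition \eqref{eq:decomposition-error}. First, I would bound the absolute value inside the expectation. Since $\mathcal{R}_{(\bar{Z}^{[Y]})}(\w{g})$ is a conditional probability given $\mathcal{D}_N$, and the Bayes rule $g^*$ uses continuous observations, I will check that $\mathcal{R}_{(\bar{\xi}^{[Y]})}(\w{g})\geq \mathcal{R}^*$ holds pointwise in $\mathcal{D}_N$. This is true because $\w{g}$ applied to the discrete skeleton of $\xi^{[Y]}$ is a measurable function of the continuous path. The chaos part is controlled pointwise by the total variation bound \eqref{eq:chaos-TV} with $\ell=1$, conditionally on the independent training sample. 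This gives
\begin{equation*}
\E\left[\left|\mathcal{R}_{(\bar{Z}^{[Y]})}(\w{g})-\mathcal{R}^*\right|\right]\leq \mathfrak{C}\Nt^{-1}+\E\left[\mathcal{R}_{(\bar{\xi}^{[Y]})}(\w{g})-\mathcal{R}^*\right],
\end{equation*}
which is exactly the content of Proposition~\ref{prop:excessriskTerm13}, written with the absolute value inside the expectation.

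Second, I would apply Theorem~\ref{theo:excessrisk} to the mean-field excess risk. With $\Delta=O(1/N)$, the terms $\sqrt{\Delta}$ and $N^{-1/2}$ are both $O(N^{-1/2})$. It then remains to bound $\log^{1/2}(N)\,\E[\|\w{b}_k-b_k^*\|_{M,k}]$ for each $k$. By Jensen's inequality this is at most $\log^{1/2}(N)\,\E[\|\w{b}_k-b_k^*\|^2_{M,k}]^{1/2}$. I then invoke \eqref{eq:normeMKcontrol} with $\varepsilon$ replaced by $\varepsilon$ (to be tuned at the end), obtaining
\begin{equation*}
\E\left[\|\w{b}_k-b_k^*\|_{M,k}\right]\leq C N^{\varepsilon/2}\left(\E\left[\|\w{b}_k-b_k^*\|^2_{M,k,k}\right]+\log(N)N^{-1/2}\right)^{1/2}.
\end{equation*}

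Third, I would plug in Theorem~\ref{theo:riskbhatMkk} with $D_N\propto N^{1/6}$ and $\Delta=O(1/N)$. The bias term is $\log^2(N)N^{-1/3}$. The variance term is $\log^{3/2}(N)\,N^{1/6}N^{-1/2}=\log^{3/2}(N)N^{-1/3}$. The discretization term is $O(1/N)$. Together with $\log(N)N^{-1/2}$, the bracket is therefore $O(\log^2(N)N^{-1/3})$, and its square root is $O(\log(N)N^{-1/6})$. Multiplying by $\log^{1/2}(N)N^{\varepsilon/2}$ and summing over the $K$ classes gives $\mathfrak{C}\log^{3/2}(N)N^{-1/6+\varepsilon/2}$. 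This is at most $N^{-1/6+\varepsilon}$ for $N$ large, since $\log^{3/2}(N)=o(N^{\varepsilon/2})$. Combining with the $N^{-1/2}$ terms and the $\Nt^{-1}$ term, and using $a+b\leq 2\max(a,b)$, yields the claim.

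The main obstacle is the bookkeeping of $\varepsilon$ and of the logarithmic factors, namely the $\exp(\sqrt{2c\log N})$ factor from Proposition~\ref{prop:changementnorme} together with the $\log^{1/2}$ truncation factor. The key point is that each of these is $o(N^{\delta})$ for every $\delta>0$, so I would split the target $\varepsilon$ between them. A minor point to verify is that the condition ``$N$ sufficiently large'' of Theorem~\ref{theo:excessrisk} (needed for $b^*_{\max}\leq\log^{1/2}N$) and the condition $D_N\leq\sqrt N$ used in the lemmas are both satisfied by the choice $D_N\propto N^{1/6}$.
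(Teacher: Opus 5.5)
Your proposal is correct and follows the paper's own route: it chains Proposition~\ref{prop:excessriskTerm13}, Theorem~\ref{theo:excessrisk}, \eqref{eq:normeMKcontrol} (via Jensen) and Theorem~\ref{theo:riskbhatMkk}, with $D_N\propto N^{1/6}$ and $\Delta=O(1/N)$, and it tracks the $N^{\varepsilon}$ and logarithmic factors correctly. You also observe that $\mathcal{R}_{(\bar{\xi}^{[Y]})}(\w{g})\geq\mathcal{R}^*$ holds pointwise in $\mathcal{D}_N$, which justifies putting the absolute value inside the expectation; the paper's one-line derivation passes over this step, since Proposition~\ref{prop:excessriskTerm13} is stated with the absolute value outside.
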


The above result highlights that the convergence rate is determined by the interplay between the propagation of chaos error and the estimation error of the drift estimators, yielding an overall rate slower than $N^{-1/6}$.
In particular, if $\Nt  > N^{1/6}$,
the first term dominates, meaning that the estimation error is the leading contribution to the excess risk. Besides, the exponent $\varepsilon$ arises from the change of norm, as emphasized by Proposition~\ref{prop:changementnorme}.
Note that the obtained rate of convergence is slower than the usual optimal  nonparametric rate in the supervised classification framework, which is of order $N^{-1/4}$ (see \emph{e.g.}~\cite{yang}). It reflects the additional difficulty of estimating interaction function of an IPS (see Section~\ref{subsec:TheoPropertiesEstimator} for other remarks). 

Finally, let us note that the choice $\Delta =O(1/N)$ is not optimal regarding Theorem~\ref{theo:excessrisk}, and Theorem~\ref{theo:riskbhatMkk}. Indeed, this condition can be weakened to $\Delta= O(1/N^{1/3})$. However, the condition $\Delta =O(1/N)$ is classical and widely considered in the literature as in~\cite{AMORINOMKV}.

\begin{remark}[The trade-off induced by the change of class]
This is a key aspect of our analysis. Indeed, the need to control the estimation error of the drift associated with class $k$ along a (limit) trajectory generated from another class $k \neq k'$ is intrinsic to the plug-in approach. Intuitively, the exponential factor appearing in Proposition~\ref{prop:changementnorme} reflects the discrepancy between the probability measures
$\mathbb{P}_k$ and $\mathbb{P}_{k'}$, for $k \neq k'$.

More precisely, as shown in the proof of Proposition~\ref{prop:changementnorme} (see Section~\ref{subsubsec:proof-changementnorme}), the control of a martingale term based on Lemma~2.1 of~\cite{vandeGeer1995} yields for $k \neq k'$
\begin{equation*}
\|\w{b}_k-b_k^*\|^2_{M,k,k',\star} \lesssim \exp(a) \|\w{b}_k-b_k^*\|^2_{M,k,k, \star} + \exp(-a^2/2c),
\end{equation*}
where 
\begin{equation*}
c =\max_{k,k' \in \mathcal{Y}}\int_0^1 (b_k^*[\xi_s, \mu^{[k]}_s]-b_{k'}^*[\xi_s, \mu^{[k']}_s])^2ds,
\end{equation*}
and the inequality holds for all $a > 0$.

This inequality illustrates the trade-off induced by the parameter $a$. When $c$ is close to zero, meaning that the drift functions are nearly indistinguishable, the second term is negligible for any fixed $a > 0$. Consequently, the convergence rate {\it w.r.t.} $\left\|\cdot\right\|_{M,k,k', \star}$ is essentially the same as that obtained for $\left\|\cdot\right\|_{M,k,k, \star}$. However, this regime also corresponds to a difficult classification problem since the classes are barely distinguishable.

Conversely, when $c$ is larger, the classification problem becomes easier because the drift functions are better separated. In this case, parameter $a$ should be chosen sufficiently large so that the residual term $\exp(-a^2/2c)$ is negligible, while avoiding an excessive increase of the multiplicative factor $\exp(a)$, which would deteriorate the convergence rate. Hence, the choice $a = \sqrt{2c\log(N)}$ provides a trade-off between these two competing effects.
\end{remark}

\section{Numerical illustration }\label{sec:num}

This section presents numerical experiments illustrating the proposed classification procedure for McKean–Vlasov particle systems. We consider two settings: a two-label example and a five-label example (Section~\ref{sec:Simulation-setting}). In both settings, the training data are simulated with a time step of $0.01$ using the particle method introduced in~\cite{liu2024particle-method}. For each example, we evaluate the empirical classification accuracy for different training sample sizes and time discretization levels. We also compare the performance of the proposed plug-in classifier with that of the simulated optimal classifier and a direct neural-network classifier.

All simulations are implemented in Python. The code used to reproduce the numerical experiments is available at~\href{https://github.com/YatingLiu2023/Classification-Mean-Field-Particle-Systems}{https://bit.ly/4yA9Yfa}.

\subsection{Simulation setting}\label{sec:Simulation-setting}

\paragraph*{Scenario A: two-class classification example}
%
We consider a two-class classification problem, referred to as Scenario A, where the drift functions 
\begin{eqnarray*}
\begin{cases}
b^{\,*}_{1}(x,y)&=\theta_0
\left[\frac{1}{4}+\frac{3}{4}\cos^2(x)+ \cos^2(y)\right] \\
b^{\,*}_{2}(x,y)&= 2 (x-y)\exp(- (x-y)^2)-8 (x-y)\exp(-2 (x-y)^2).
\end{cases}
\end{eqnarray*}
The parameter $\theta_0$ in $b^{*}_{1}$ controls the separation between the trajectories in these two particle systems. We investigate two representative regimes:
\begin{enumerate}
    \item[(i)]$\theta_0 = 1$, which produces well separated trajectories, and
    \item[(ii)]$\theta_0 = 0.5$, for which the trajectories show substantial overlap, resulting in a more challenging classification task.
\end{enumerate}

Figure~\ref{fig:Atwo-regimes-true-drifts} illustrates the true drift functions $b^{*}_{1}$ (for $\theta_0 = 1$ and $\theta_0 = 0.5$) and $b^{*}_{2}$. Figure~\ref{fig:Atwo-regimes-path} displays representative trajectories under regimes A(i) and A(ii).

\begin{figure}[H]
\centering
\begin{subfigure}[t]{0.32\textwidth}
    \centering
\includegraphics[width=\linewidth]{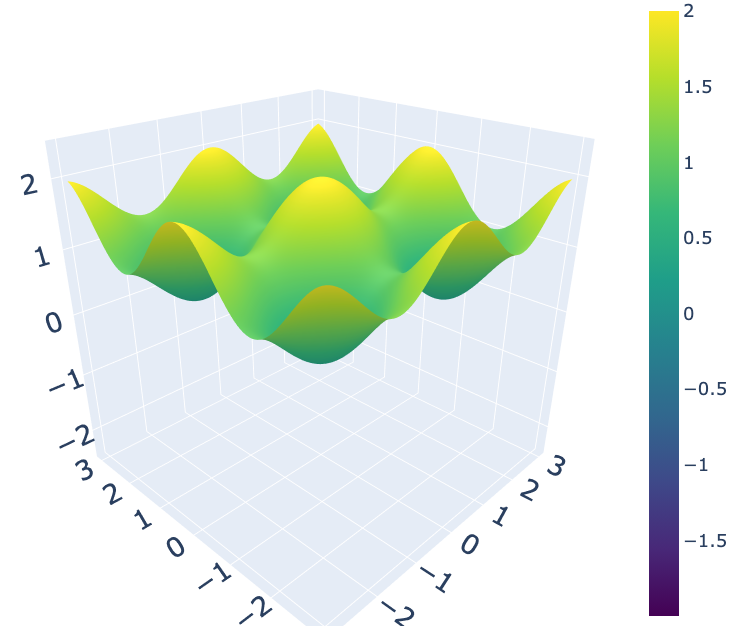}
    \caption{A(i) True drift $b_1^*$ with $\theta_0 = 1$}
\end{subfigure}
\begin{subfigure}[t]{0.32\textwidth}
    \centering
    \includegraphics[width=\linewidth]{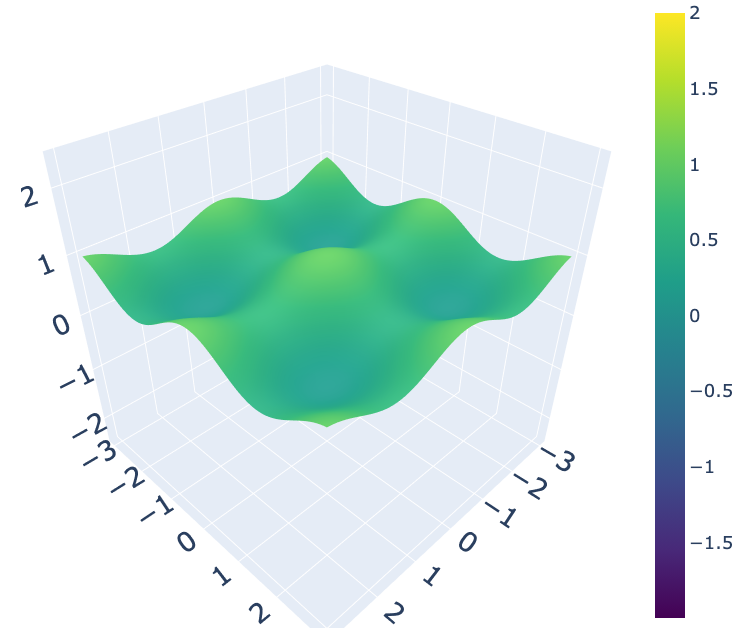}
    \caption{A(ii) True drift $b_1^*$ with $\theta_0 = 0.5$}
\end{subfigure}
\begin{subfigure}[t]{0.32\textwidth}
    \centering
    \includegraphics[width=\linewidth]{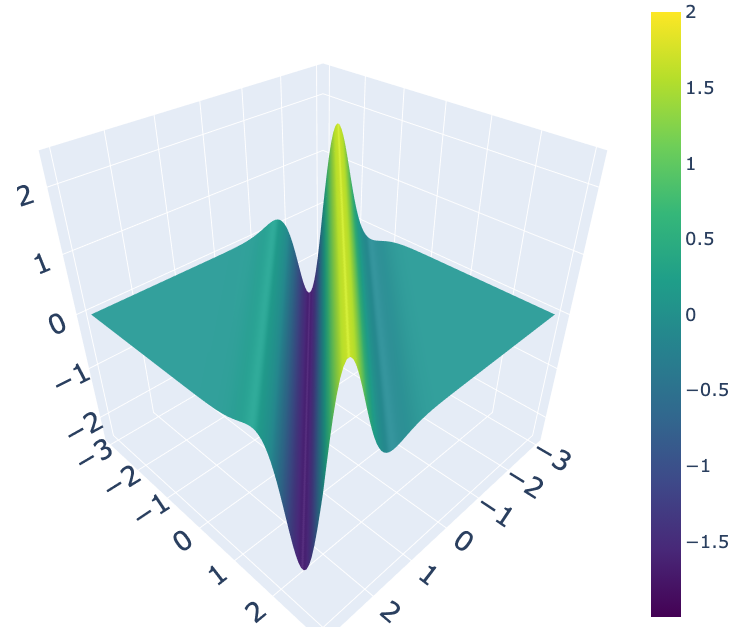}
    \caption{A - True drift $b_2^*$}
\end{subfigure}
\caption{True drift functions of Scenario A}
\label{fig:Atwo-regimes-true-drifts}
\end{figure}

\begin{figure}[H]
\centering
\begin{subfigure}[t]{0.4\textwidth}
    \centering
    \includegraphics[width=\linewidth]{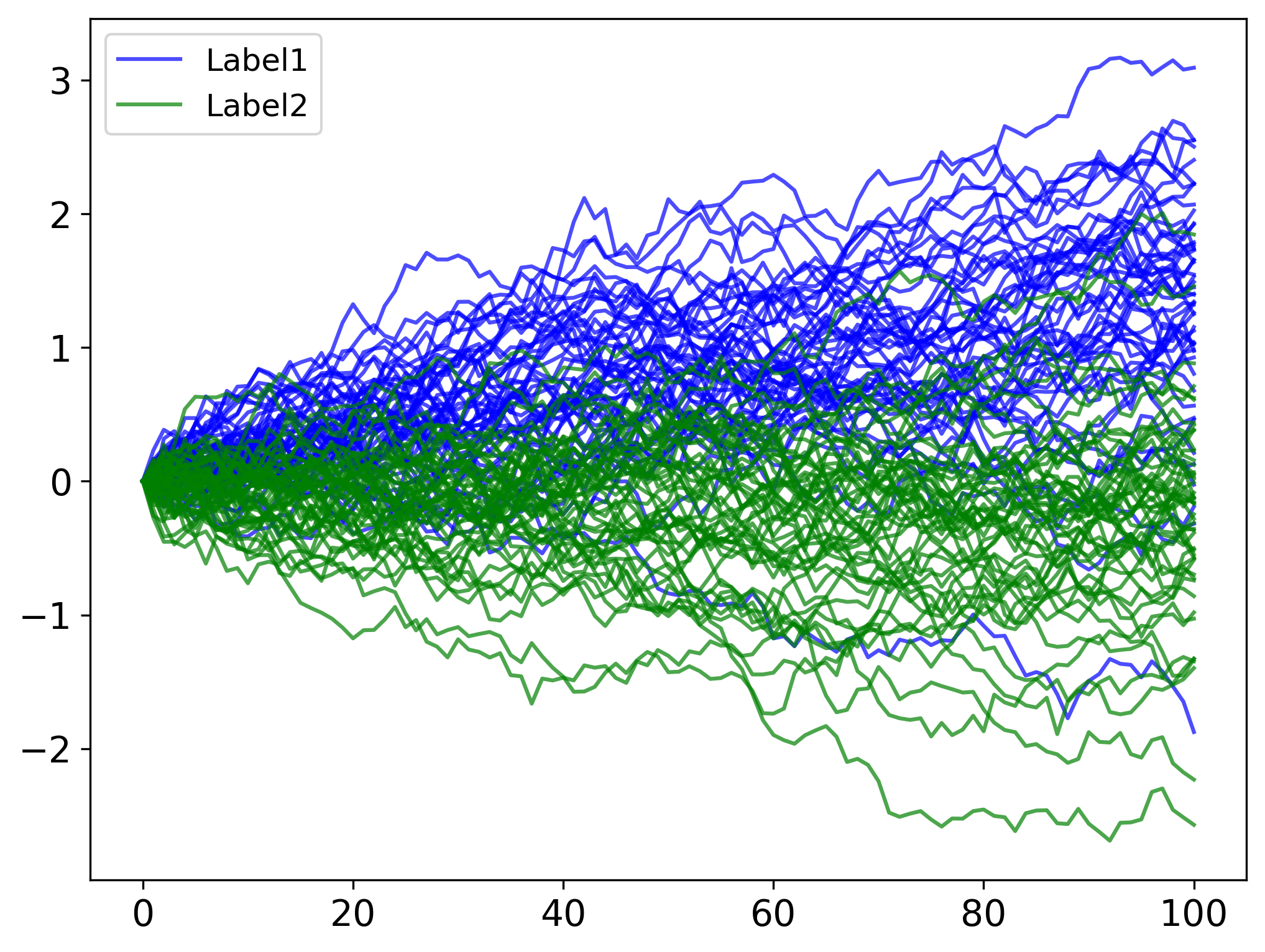}
\end{subfigure}
\quad 
\begin{subfigure}[t]{0.4\textwidth}
    \centering
\includegraphics[width=\linewidth]{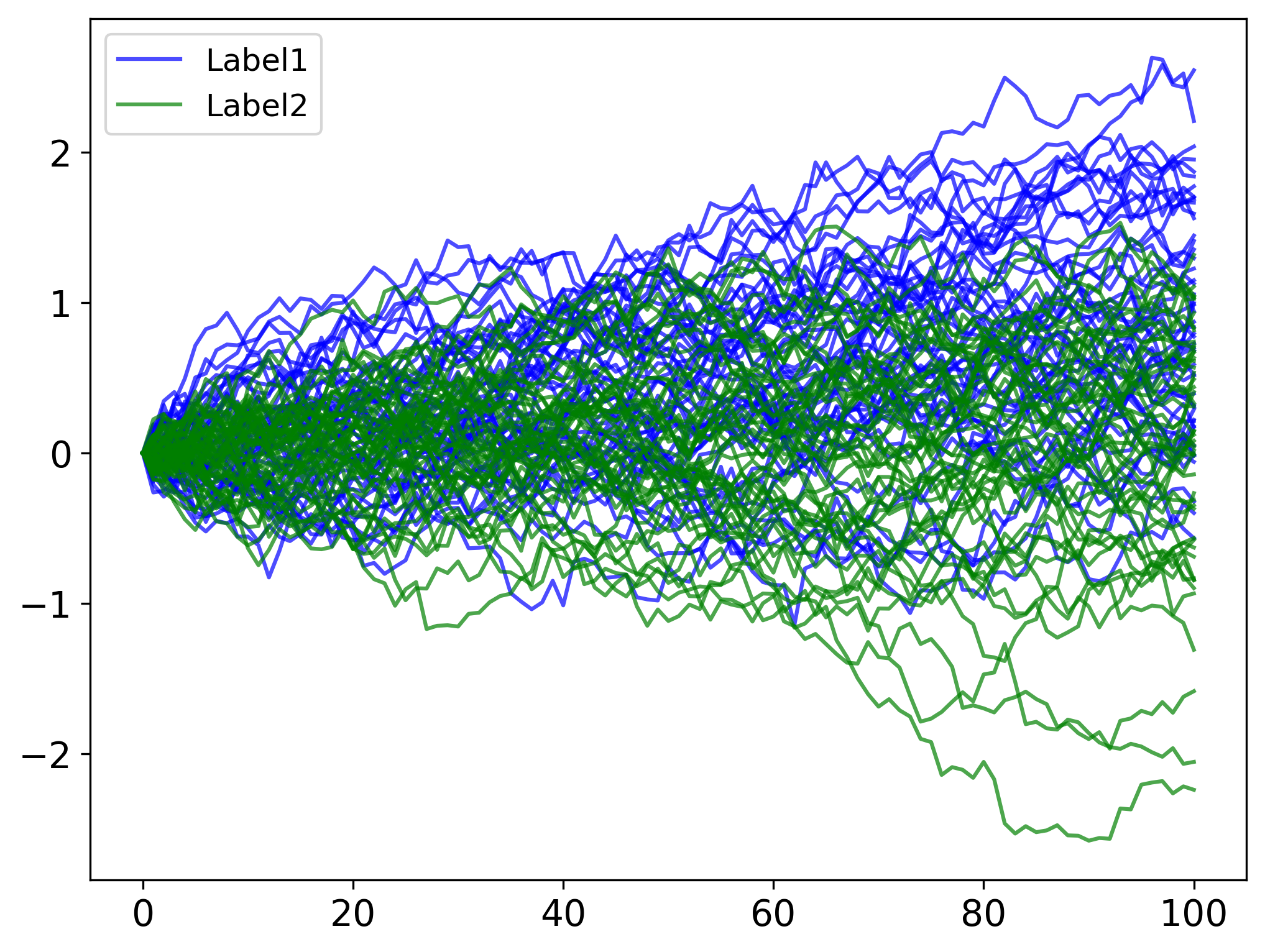}
\end{subfigure}
\caption{Representative trajectories under regimes A(i) $\theta_0 = 1$ (left) and A(ii) $\theta_0 = 0.5$ (right).}
\label{fig:Atwo-regimes-path}
\end{figure}

\paragraph*{Scenario B: Five-class classification}
We next consider a five-class classification problem inspired by \cite{DDMVC2022}. 
The corresponding interacting particle systems \eqref{eq:sys} are defined with drift functions $b_k^*$, for $k=1,\dots,5$,
\begin{equation}\label{eq:def-bk-5labels}
    b^{\,*}_{k}(x,y)=\theta_k\left[\frac{1}{4}+\frac{3}{4}\cos^2(x)+ \cos^2(y)\right].
\end{equation}
The parameter $\theta_k$ determines the class. 
In this example, we choose
\[
(\theta_1, \theta_2, \theta_3, \theta_4, \theta_5) = (-6,-3,0,3,6).
\]
Figure~\ref{fig:5-label-paths} illustrates 50 sample paths generated under the different labels. 
\begin{figure}[H]
    \centering
    \includegraphics[width=0.5\linewidth]{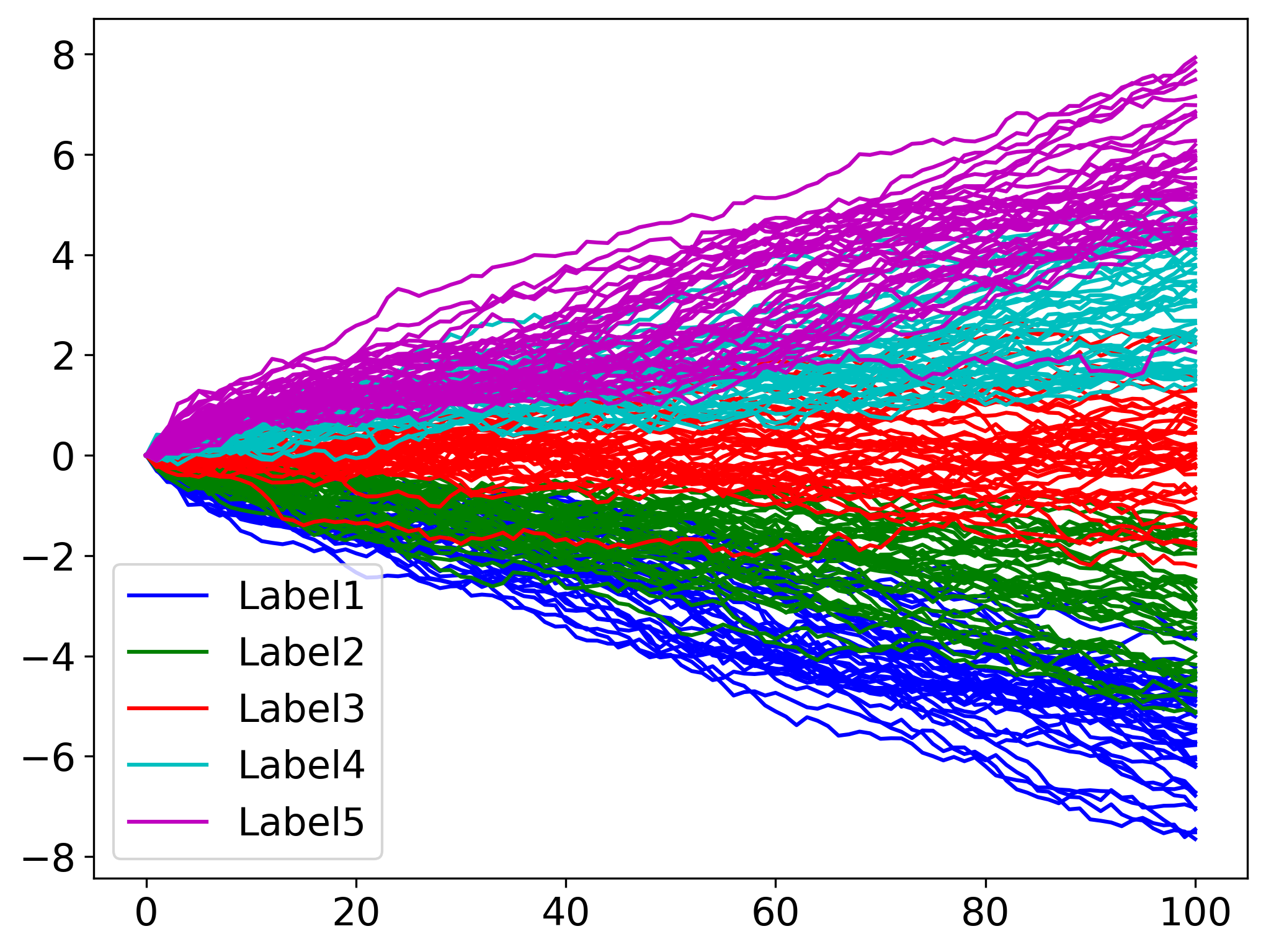}
\caption{Representative trajectories for each of the five labels.}
    \label{fig:5-label-paths}
\end{figure}

\paragraph*{Approximation of the optimal classifier}

To provide a benchmark for the classification performance, we approximate the optimal classifier $g^*$ defined in \eqref{eq:gstarfinal}. More precisely, we consider 
\begin{equation}\label{eq:approximate-bayes}
  \widehat{g}^{\text{optimal}}(Z) \in \argmax{k \in \mathcal{Y}}\wt{F}_k(Z), 
\end{equation}
where $\widetilde{F}_k$ is a numerical approximation of the discretized score defined later in~\eqref{eq:Fhat}. 
The approximation is computed using with 
$N_{\text{ref}} = 5000$ and $M_{\text{ref}} = 100$. The resulting empirical classification accuracies of this approximate optimal classifier are \textbf{0.808} for Scenario~A(i), \textbf{0.696} for Scenario~A(ii), and \textbf{0.906} for Scenario~B.

\subsection{Implementation and benchmark methods}

\paragraph{Implementation details.}

For Scenario A, the B-spline basis is constructed over the rectangular domain 
$[-6,6] \times [-6,6]$, chosen to cover the region explored by the simulated trajectories. The dimension $D$ of the B-spline basis is a tuning parameter. In the experiments reported below, we set $D=2$ and use B-splines of order $2$. For Scenario~B, the B-spline basis is constructed on the larger domain $[-12,12]\times[-12,12]$, reflecting the wider range of the simulated trajectories. We set $D=20$, while keeping the spline order equal to $2$. The choice of the B-spline basis dimension is further investigated in Section~\ref{subsubsec:selection_b_spline_dim}.


\paragraph{Benchmark method.}

We compare our classification rule with a direct neural-network baseline, referred to as \emph{direct NN}. The baseline is a fully connected feedforward neural network with ReLU activations. The network outputs class logits and is trained using the Adam optimizer with cross-entropy loss. For each class $k$, we use $\big(\bar{X}^{[k],1},\ldots,\bar{X}^{[k],N}\big)$ from $\mathcal{D}^{[k]}_N$ as the training sample, while the particle system $\big(\bar{\widetilde{X}}^{[k],1},\ldots,\bar{\widetilde{X}}^{[k],N}\big)$ is used as the validation sample for hyperparameter tuning.

For each repetition, we perform a random hyperparameter search over 20 configurations. The search includes the number of hidden layers in \{1,2,3,4\}, the number of neurons per hidden layer in \{16, 32, $\dots$, 128\}, and the learning rate (log-uniform in $[10^{-4},10^{-2}]$). The configuration achieving the highest validation accuracy is selected. The selected configuration is then reinitialized and trained on the training system, with the validation system used for early stopping. 

\subsection{Experimental results}

The numerical experiments are conducted with $N_{\mathrm{train}}\in\{100,1000\}$ training particles per class and $M_{\mathrm{train}}\in\{10,100\}$ observation steps, corresponding to observation time steps $\Delta_{\mathrm{train}}=1/M_{\mathrm{train}}$. The test system contains $N_{\mathrm{test}}=1000$ particles per class. For each configuration, we perform 50 independent Monte Carlo repetitions and report the mean classification accuracy together with its sample standard deviation.


\paragraph*{Results for Scenario A}

The numerical results are reported in Tables~\ref{tab:compare_with_NN-2labels-regime1} and~\ref{tab:compare_with_NN-2labels-regime2}. As expected from the stronger overlap between the two classes, Regime~A(ii) ($\theta_0=0.5$) is more difficult than Regime~A(i) ($\theta_0=1$). For our plug-in classifier, the mean accuracy ranges from $0.803$ to $0.807$ in Regime~A(i), compared with $0.679$ to $0.692$ in Regime~A(ii). The corresponding standard deviations remain small across all configurations, ranging from $0.009$ to $0.010$ in Regime~A(i) and from $0.011$ to $0.013$ in Regime~A(ii).

Increasing the number of training particles from $N_{\mathrm{train}}=100$ to $N_{\mathrm{train}}=1000$ generally improves the classification accuracy. In Regime~A(i), the improvement is approximately $0.002$--$0.004$, depending on the observation grid, while in Regime~A(ii) it is approximately $0.008$--$0.010$. By comparison, increasing $M_{\mathrm{train}}$ from $10$ to $100$ has a smaller and less systematic effect. 

We next compare the proposed procedure with the direct neural-network classifier. The plug-in classifier achieves a higher mean accuracy in all eight configurations. In Regime~A(i), the direct NN attains mean accuracies between $0.748$ and $0.801$, compared with $0.803$--$0.807$ for the plug-in method. In Regime~A(ii), the direct NN achieves mean accuracies between $0.617$ and $0.679$, compared with $0.679$--$0.692$ for the plug-in classifier.

The direct NN also exhibits greater variability across Monte Carlo repetitions. Its standard deviation ranges from $0.012$ to $0.094$ in Regime~A(i) and from $0.029$ to $0.067$ in Regime~A(ii), whereas the standard deviation of the plug-in classifier remains between $0.009$ and $0.013$. The difference in variability is particularly pronounced for the smaller training sample size $N_{\mathrm{train}}=100$. In particular, for $(M_{\mathrm{train}},N_{\mathrm{train}})=(10,100)$, the direct NN exhibits class collapse in $5$ out of the $50$ repetitions in both regimes. These occasional collapse events contribute to the larger variability observed for the direct NN in the small-sample setting.
\begin{table}[H]
\centering
\caption{Performance comparison between our proposed method and a direct neural network classifier in Regime~A(i) ($\theta = 1$).}
\label{tab:compare_with_NN-2labels-regime1}
\begin{tabular}{ccccccccc}
\hline 
$M_{\text{train}}$ & $N_{\text{train}}$ 
& \makecell{\vspace{-0.25cm}\\Our method\\mean accuracy}
 
  & \makecell{\vspace{-0.25cm}\\Direct NN\\mean accuracy} \\
\hline
10 & 100 & 0.803 (0.010) & 0.748 (0.094) \\
10 & 1000 & 0.807 (0.010) & 0.785 (0.073) \\
100 & 100 & 0.803 (0.010) & 0.749 (0.091) \\
100 & 1000 & 0.805 (0.009) & 0.801 (0.012) \\
\hline
\end{tabular}%
\end{table}

\begin{table}[H]
\centering
\caption{Performance comparison between our proposed method and a direct neural network classifier in Regime~A(ii) ($\theta = 0.5$).}
\label{tab:compare_with_NN-2labels-regime2}
\begin{tabular}{ccccccccc}
\hline 
$M_{\text{train}}$ & $N_{\text{train}}$ 
& \makecell{\vspace{-0.25cm}\\Our method\\mean accuracy (std)}
 
  & \makecell{\vspace{-0.25cm}\\Direct NN\\mean accuracy (std)} \\
\hline
10 & 100 & 0.679 (0.013) & 0.617 (0.067) \\
10 & 1000 & 0.687 (0.012) & 0.679 (0.029) \\
100 & 100 & 0.682 (0.011) & 0.618 (0.064) \\
100 & 1000 & 0.692 (0.011) & 0.672 (0.040) \\
\hline
\end{tabular}%
\end{table}

\paragraph*{Results for Scenario B}

For Scenario~B, we follow the same experimental protocol as in Scenario~A. The results are reported in Table~\ref{tab:comparison-with-NN-5labels}.

Overall, the conclusions are similar to those obtained for Scenario~A. The proposed plug-in classifier performs well across all four configurations. Increasing the number of training particles to $N_{\mathrm{train}}=1000$ raises the mean accuracies to $0.898$ and $0.906$ for $M_{\mathrm{train}}=10$ and $M_{\mathrm{train}}=100$, respectively. In particular, for $(M_{\mathrm{train}},N_{\mathrm{train}})=(100,1000)$, the accuracy of the plug-in classifier is essentially identical to that of the optimal classifier. The standard deviations remain small across all configurations, ranging from $0.004$ to $0.007$.

The proposed method also achieves a higher mean accuracy than the direct neural-network classifier in all four configurations. Moreover, the direct NN exhibits larger variability across Monte Carlo repetitions, with standard deviations ranging from $0.030$ to $0.191$, compared with $0.004$--$0.007$ for the proposed method.

\begin{table}[H]
\centering
\caption{Performance comparison between our proposed method and a direct neural network classifier for Scenario B.}
\label{tab:comparison-with-NN-5labels}
\begin{tabular}{ccccccccc}
\hline 
$M_{\text{train}}$ & $N_{\text{train}}$ 
& \makecell{\vspace{-0.25cm}\\Our method\\mean accuracy (std)}
 
  & \makecell{\vspace{-0.25cm}\\Direct NN\\mean accuracy (std)} \\
\hline
10 & 100 & 0.887 (0.007) & 0.815 (0.131) \\
10 & 1000 & 0.898 (0.004) & 0.882 (0.041) \\
100 & 100 & 0.894 (0.005) & 0.741 (0.191) \\
100 & 1000 & 0.906 (0.004) & 0.883 (0.030) \\
\hline
\end{tabular}%
\end{table}

\section{Conclusion and discussion}

In this paper, we studied the supervised classification problem where the features consist of discrete observations of particles evolving according to interacting particle systems associated with $K$ classes. Assuming that the classes are distinguished through the drift function of the underlying stochastic differential equation, we introduced the mean-field Bayes classifier based on the limiting McKean--Vlasov dynamics. Building upon this characterization, we proposed a plug-in classification procedure relying on a preliminary nonparametric estimator of the interacting drift function.

We established the consistency of the proposed classifier and derived a convergence rate for its excess risk with respect to the mean-field Bayes risk. Our analysis highlights the combined effect of the statistical estimation error and the finite-size approximation of the interacting particle system. Although drift estimation is only an intermediate step in the construction of the classifier, we also obtained an integrated convergence rate for the drift estimator itself, showing an overall rate of order $N^{-1/6}$. Finally, numerical experiments illustrate the effectiveness of the proposed approach and support the theoretical findings.

Throughout this work, we assumed that the class proportions are known. This assumption is motivated by our observational framework, in which only two interacting particle systems are available for each class, preventing a consistent estimation of the class distribution. Estimating the class prior would therefore require a richer sampling framework with repeated independent observations of the interacting particle systems. We also note that our results can be readily extended to weighted classification risks with known class-dependent misclassification costs.

For simplicity, both the methodology and the theoretical analysis were developed for one-dimensional particle systems. While the theoretical arguments naturally extend to finite-dimensional settings, the proposed nonparametric estimator relies on a two-dimensional $B$-spline basis, making both its construction and the associated analysis substantially more involved in higher dimensions. Developing computationally efficient estimation procedures together with corresponding theoretical guarantees in this setting remains an interesting direction for future work.

Finally, another promising perspective is the estimation of the diffusion coefficient within the classification procedure. While considerable progress has been made on drift estimation for McKean--Vlasov stochastic differential equations, the nonparametric estimation of the diffusion coefficient remains largely unexplored. Addressing this problem requires simultaneously controlling the statistical estimation error and the propagation-of-chaos error induced by the empirical approximation of the underlying distribution, making it a particularly challenging avenue for future research.


\section*{Acknowledgments}
This work is part of the 2022 DAE 103 EMERGENCE(S) - PROCECO project supported by Ville de Paris.

This paper was completed while C. Dion-Blanc was affiliated
with the Centre de Math\'ematiques Appliqu\'ees (CMAP) at Ecole Polytechnique. She is grateful to Sylvie 
M\'el\'eard for the opportunity to join her team, supported by the European Union (ERC, SINGER, 101054787).

Yating Liu acknowledges the financial support from the CNRS through the MITI interdisciplinary programs and support from the French National Research Agency (ANR) under the France 2030 program (ANR-23-EXMA-0011, MIRTE Project). 

The authors also thank Marc Hoffmann for fruitful discussions. 

\newpage
\section{Proofs}

\subsection{Technical results}\label{proofs:technical}
The following result is taken from \citet[Lemma 20]{hoffmann2021}. For the reader’s convenience, we restate it below using our notation.

\begin{lemma}\label{lem:hoffmann2021-lem20}
Assume that Assumption \ref{ass:bsup_et_lip_in_x_and_y} holds.  For any fixed $k \in \mathcal{Y}$, let $\xi^{[k]} = (\xi^{[k]}_t)_{t \in [0,1]}$ denote the unique strong solution to \eqref{eq:mkv} given $Y=k$. Then, for every $p\geq 1$, it holds 
\begin{equation}\label{eq:hoffmann2021-lem20}
\sup_{t\in[0,1]}\E\big[|\xi_t^{[k]}|^{2p}\big]\leq p\,! \, \mathfrak{C}^p.
\end{equation}
\end{lemma}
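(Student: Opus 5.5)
The plan is to write $\xi^{[k]}_t = x_0 + \int_0^t \beta_s\,\dd s + W_t$ with $\beta_s := b_k^*[\xi^{[k]}_s,\mu^{[k]}_s]$. The drift satisfies $|\beta_s|\le b^*_{\max}$ by Assumption~\ref{ass:bsup_et_lip_in_x_and_y}, since $b_k^*$ is bounded and $\mu^{[k]}_s$ is a probability measure. Hence, pathwise and for all $t\in[0,1]$,
\begin{equation*}
|\xi^{[k]}_t| \le |x_0| + b^*_{\max} + |W_t| \le |x_0| + b^*_{\max} + \sup_{s\in[0,1]}|W_s|.
\end{equation*}
This bound reduces the statement to a moment bound for a Gaussian quantity, uniformly in $t$. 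We never need the Lipschitz property here, only boundedness of the drift.

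Next we bound the moments. Set $a := |x_0|+b^*_{\max}$. By convexity, $(a+|W_t|)^{2p}\le 2^{2p-1}(a^{2p}+|W_t|^{2p})$. Since $W_t\sim\mathcal{N}(0,t)$ with $t\le 1$,
\begin{equation*}
\E|W_t|^{2p}\le \E|G|^{2p}=(2p-1)!!\le 2^p p!,
\end{equation*}
where $G\sim\mathcal N(0,1)$. The inequality $(2p-1)!! = \frac{(2p)!}{2^p p!}\le 2^p p!$ follows from $\binom{2p}{p}\le 4^p$. The deterministic term is handled by $a^{2p}\le (\max(a,1)^2)^p \le p!\,(\max(a,1)^2)^p$, using $p!\ge 1$.

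Combining the two pieces gives
\begin{equation*}
\sup_t \E|\xi^{[k]}_t|^{2p}\le 2^{2p-1}\big(p!\,\max(a,1)^{2p} + 2^p p!\big)\le p!\,\mathfrak{C}^p,
\end{equation*}
with for instance $\mathfrak C = 8\max(a,1)^2 + 8$. This constant depends only on $x_0$ and $b^*_{\max}$, and it is uniform in $k$ after taking the maximum over $\cY$.

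The only step needing care is keeping the growth in $p$ factorial rather than something worse, such as $(2p)!$. This is where the explicit Gaussian moment $(2p-1)!!$ and the comparison $(2p-1)!!\le 2^p p!$ are needed, rather than a crude Burkholder–Davis–Gundy bound with unspecified $p$-dependent constants. Everything else is routine. Strong existence of $\xi^{[k]}$ is guaranteed by the discussion following \eqref{eq-lip-wasserstein}.
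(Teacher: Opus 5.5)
Your proof is correct. Because $|b_k^*[x,\mu]|\le b^*_{\max}$, you get the pathwise bound $|\xi^{[k]}_t|\le |x_0|+b^*_{\max}+|W_t|$. The convexity inequality and the Gaussian estimate $\E|W_t|^{2p}\le (2p-1)!!\le 2^p p!$ then give $p!\,\mathfrak{C}^p$, with an explicit constant depending only on $x_0$ and $b^*_{\max}$.

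The paper does not prove this lemma. It imports it from \cite{hoffmann2021} (Lemma~20), where it is obtained in the general McKean--Vlasov framework of that reference. It is then used in Lemma~\ref{lem:ineq-concentration-Xt}(1) to get Gaussian tails for $\xi^{[k]}_t$.

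Your argument is exactly the one the paper uses for the finite particle $X^{[k],1}$ in the proof of Lemma~\ref{lem:ineq-concentration-Xt}(2). You simply notice that it transfers verbatim to the limit process, because the measure-averaged drift obeys the same bound $b^*_{\max}$.

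What your route buys is a self-contained proof with an explicit constant. It also makes visible that only boundedness of $b_k^*$ and existence of the solution are used, not the Lipschitz property. The citation only buys reuse of a general result, at the cost of hiding how little is needed in this model.

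One small point: the double factorial and $\binom{2p}{p}\le 4^p$ presuppose $p$ integer, which is how the paper uses the lemma. For real $p\ge 1$, reading $p!$ as $\Gamma(p+1)$, the same conclusion follows from $\E|G|^{2p}=2^p\Gamma(p+\tfrac12)/\sqrt{\pi}\le 2^p\Gamma(p+1)$.
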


The following proposition is adapted from \citet[Lemma~3.2]{lacker2018mean}, \citet[Propositions~7.6 and~7.7]{pages-numerical-proba}, and \citet[Proposition~2.1]{liu2020functional}.
\begin{proposition}\label{prop:pre-result1}
Grant Assumption \ref{ass:bsup_et_lip_in_x_and_y}. Let $\xi^{[k]}=(\xi^{[k]}_t)_{t\in[0,1]}$ denote the unique strong solution of the McKean-Vlasov equation~\eqref{eq:mkv} given $Y=k$ and let $(\mu^{[k]}_t)_{t\in[0,1]},\,k\in\mathcal{Y}$ denote their respective marginal distributions. Moreover, let $(X_t^{[k], 1}, ..., X_t^{[k], N})_{t\in[0,1]}$ be the particles  defined by \eqref{eq:sys}. 
\begin{enumerate}[label=(\alph*)]
\item For every $p\geq 1$ and for every $k\in\mathcal{Y}$, we have  
\begin{equation}
\sup_{t\in[0,1]}\mathcal{W}_p(\mu_{t}^{[k]}, \delta_0)\leq \Big\Vert \sup_{t\in[0,1]}\big|\xi_t^{[k]}\big|\Big\Vert_p\leq {C_{p,\mathfrak{C}}}.
\end{equation}
\item For every $k\in\mathcal{Y}$ and for every $s,t\in[0,1], s\leq t,$ we have 
\[\mathcal{W}_2(\mu_t^{[k]}, \mu_s^{[k]})\leq \Vert \xi_t^{[k]}- \xi_s^{[k]}\Vert_2\leq \mathfrak{C}\sqrt{t-s}.\]
\item {For every $p\geq 1$, $k\in\cY$, $n\in \{1, ..., N\}$ and} $t,s\in[0,1]$, we have 
\begin{align}\label{eq:particle-module-continuity}
&\qquad \Big\Vert \sup_{t\in[0,T]}\big|X_t^{[k], n}\big|\Big\Vert_p\leq {C_{p,\mathfrak{C}}}\quad\text{and}\quad \left\Vert X_t^{[k], n}-X_s^{[k], n}\right\Vert_p\leq {C_{p,\mathfrak{C}}}\,|t-s|^{1/2}.
\end{align}
\end{enumerate}
\end{proposition}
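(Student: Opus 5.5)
We prove the three items of Proposition~\ref{prop:pre-result1} in order. The main tools are the boundedness of the drift (Assumption~\ref{ass:bsup_et_lip_in_x_and_y}), the Burkholder--Davis--Gundy (BDG) inequality, and the coupling characterisation of Wasserstein distances. None of the arguments requires a Gronwall step: boundedness of $b_k^*$ makes all drift terms trivially controlled.

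\textbf{Item (a).} Fix $k$ and write the McKean--Vlasov equation~\eqref{eq:mkv} in integral form:
\[
\xi_t^{[k]} = x_0 + \int_0^t b_k^*\big[\xi_s^{[k]},\mu_s^{[k]}\big]\,\dd s + W_t .
\]
Since $|b_k^*[x,\mu]|\le b^*_{\max}$, we get $\sup_{t\le 1}|\xi_t^{[k]}| \le |x_0| + b^*_{\max} + \sup_{t\le 1}|W_t|$. BDG (or Doob's inequality) gives $\|\sup_t |W_t|\|_p \le C_p$, hence $\|\sup_t|\xi^{[k]}_t|\|_p \le C_{p,\mathfrak{C}}$. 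For the left inequality, note that $\mathcal{W}_p(\mu_t^{[k]},\delta_0)$ is exactly $\|\xi_t^{[k]}\|_p$, since the only coupling with a Dirac mass is the product. Moreover $\|\xi_t^{[k]}\|_p \le \|\sup_s|\xi_s^{[k]}|\|_p$ for every $t$.

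\textbf{Item (b).} The pair $(\xi_t^{[k]},\xi_s^{[k]})$ is a coupling of $(\mu_t^{[k]},\mu_s^{[k]})$, so $\mathcal{W}_2(\mu_t^{[k]},\mu_s^{[k]}) \le \|\xi_t^{[k]}-\xi_s^{[k]}\|_2$. The increment satisfies
\[
|\xi_t^{[k]}-\xi_s^{[k]}| \le b^*_{\max}(t-s) + |W_t-W_s| .
\]
Since $t-s\le 1$, we have $b^*_{\max}(t-s) \le b^*_{\max}\sqrt{t-s}$, and $\|W_t-W_s\|_2 = \sqrt{t-s}$. This yields $\mathfrak{C}\sqrt{t-s}$.

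\textbf{Item (c).} The same reasoning applies to each particle of~\eqref{eq:sys}. The interaction drift $\frac1N\sum_{n'} b_k^*(X_t^{[k],n},X_t^{[k],n'})$ is an average of bounded terms, so it is bounded by $b^*_{\max}$ uniformly in $N$ and $n$. Hence
\[
\sup_t|X_t^{[k],n}| \le |x_0| + b^*_{\max} + \sup_t|W_t^n|,
\qquad
|X_t^{[k],n}-X_s^{[k],n}| \le b^*_{\max}|t-s| + |W_t^n-W_s^n| .
\]
Taking $L^p$ norms, using BDG for the supremum and $\|W_t-W_s\|_p = C_p|t-s|^{1/2}$ for the increment, gives both bounds in~\eqref{eq:particle-module-continuity}. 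The constants are independent of $N$ and $n$.

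\textbf{Main subtlety.} There is no real obstacle here. The one point to check is that the constants do not depend on $N$, which holds because we use the uniform bound $b^*_{\max}$ rather than a Lipschitz--Gronwall argument. The Lipschitz part of Assumption~\ref{ass:bsup_et_lip_in_x_and_y} is needed only to guarantee well-posedness of~\eqref{eq:mkv} and~\eqref{eq:sys}, which was recalled in Section~\ref{subsec:ModelAssump}.
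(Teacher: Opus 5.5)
Your proof is correct, and it takes a different route from the one the paper relies on. The paper gives no argument of its own for this proposition. It only says the result is adapted from \citet[Lemma~3.2]{lacker2018mean}, \citet[Propositions~7.6 and~7.7]{pages-numerical-proba} and \citet[Proposition~2.1]{liu2020functional}. Those references obtain moment and $\tfrac12$-H\"older bounds for (McKean--Vlasov) SDEs under Lipschitz or linear-growth hypotheses on the drift, typically through BDG combined with a Gronwall step, so the resulting constants depend exponentially on the growth or Lipschitz constants.

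Your argument instead uses the boundedness of $b_k^*$ from Assumption~\ref{ass:bsup_et_lip_in_x_and_y}. This lets you dominate the drift pathwise by $b^*_{\max}t$, so everything reduces to moments of $\sup_t|W_t|$ and of Gaussian increments. It is the same pathwise bound, $|X_t^{[k],1}|\le|x_0|+b^*_{\max}+|W_t^1|$, that the paper itself uses in the proof of Lemma~\ref{lem:ineq-concentration-Xt}(2). Your identification $\mathcal{W}_p(\mu_t^{[k]},\delta_0)=\|\xi_t^{[k]}\|_p$ (the only coupling with a Dirac mass is the product) and the coupling argument in (b) are exactly right.

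What your approach buys is a short, self-contained proof with explicit constants such as $|x_0|+b^*_{\max}+\|\sup_t|W_t|\|_p$. Uniformity in $N$ and $n$ is immediate, and the Lipschitz constant $L$ plays no role beyond well-posedness. What the cited Gronwall-based approach buys is robustness: it would still work if the assumption were weakened to unbounded drifts of linear growth, where your pathwise domination is no longer available.
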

We also refer to \cite{AMORINOMKV} for similar results Lemma 5.1.

\begin{lemma}\label{lem:ineq-concentration-Xt}
Assume that Assumption \ref{ass:bsup_et_lip_in_x_and_y} holds.  
\begin{enumerate}
\item[(1)] For any fixed $k \in \mathcal{Y}$, let $\xi^{[k]} = (\xi^{[k]}_t)_{t \in [0,1]}$ denote the unique strong solution to \eqref{eq:mkv}. Then, for any $t \in [0,1]$, we have
\[
\P\left(\, \big|\xi^{[k]}_t\big| > A_N \,\right) \leq 2\exp\left(-\mathfrak{C}{A_N^2}\right).\]
\item[(2)] Let $X^{[k],1} = (X^{[k],1}_t)_{t \in [0,1]}$ denote the first particle in the particle system \eqref{eq:sys}. Then, for any $t \in [0,1]$, we have
\[
\P\left( |X^{[k],1}_t| > A_N \right) \leq 2\exp\left(-\mathfrak{C} A_N^2\right).
\]
\end{enumerate}
\end{lemma}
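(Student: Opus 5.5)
Both parts come from a Gaussian-type tail bound for the supremum of a Brownian motion with bounded drift. For part (1), write the McKean--Vlasov solution in integrated form,
\[
\xi^{[k]}_t = x_0 + \int_0^t b_k^*\big[\xi^{[k]}_s,\mu^{[k]}_s\big]\,\dd s + W_t .
\]
By Assumption~\ref{ass:bsup_et_lip_in_x_and_y}, $|b_k^*[x,\mu]|\le \|b_k^*\|_\infty\le b^*_{\max}$ for every $(x,\mu)$. Hence, almost surely and for all $t\in[0,1]$,
\[
|\xi^{[k]}_t|\le |x_0| + b^*_{\max} + |W_t|.
\]
The drift is handled deterministically, so no moment estimates (Lemma~\ref{lem:hoffmann2021-lem20}) are needed.

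Set $c_0:=|x_0|+b^*_{\max}$, which depends only on model parameters. The event $\{|\xi^{[k]}_t|>A_N\}$ is contained in $\{|W_t|>A_N-c_0\}$. Since $W_t\sim\mathcal N(0,t)$ with $t\le 1$, the standard Gaussian tail bound gives
\[
\P\big(|W_t|>u\big)\le 2\exp\!\big(-u^2/(2t)\big)\le 2\exp(-u^2/2), \qquad u\ge 0.
\]
Take $u=A_N-c_0$. For $A_N\ge 2c_0$ we have $A_N-c_0\ge A_N/2$, hence the bound $2\exp(-A_N^2/8)$. If instead $A_N<2c_0$, the claim follows by shrinking $\mathfrak C$: the right-hand side $2\exp(-\mathfrak C A_N^2)\ge 2\exp(-4\mathfrak C c_0^2)\ge 1$ as soon as $\mathfrak C\le \log 2/(4c_0^2)$. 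The bound then holds trivially, so taking $\mathfrak C=\min(1/8,\log 2/(4c_0^2))$ covers all $A_N$ (and, since $A_N=\log N$, "small" $A_N$ only concerns finitely many $N$ anyway).

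Part (2) is identical. From~\eqref{eq:sys},
\[
X^{[k],1}_t = x_0 + \int_0^t \frac1N\sum_{n'} b_k^*\big(X^{[k],1}_s,X^{[k],n'}_s\big)\,\dd s + W^1_t,
\]
and the empirical-average drift is also bounded by $b^*_{\max}$ uniformly in $N$. The same inclusion of events and Gaussian tail bound for $W^1_t$ give the result with the same constant.

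The only point needing care is that the constant is uniform in $N$ and $t$, which holds because the drift bound does not depend on $N$ and $t\le 1$. As a consequence, with $A_N=\log N$ the bound is $2\exp(-\mathfrak C\log^2 N)\le \mathfrak C' N^{-1}$ for $N$ large, which is the estimate used in Section~\ref{subsec:TheoPropertiesEstimator}.
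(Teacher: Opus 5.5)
Your proof is correct, and for part (1) it takes a more elementary route than the paper. The paper proves (1) by importing the sub-Gaussian moment bound $\E[|\xi^{[k]}_t|^{2p}]\le p!\,\mathfrak{C}^p$ from Lemma 20 of \cite{hoffmann2021}. It sums the series to get $\E[\exp(|\xi^{[k]}_t|^2/(2\mathfrak{C}))]\le 2$ and concludes with Markov's inequality. Only for part (2) does it use the pathwise domination $|X^{[k],1}_t|\le |x_0|+b^*_{\max}+|W^1_t|$. Even there it goes through the Gaussian moments $\E[|W^1_t|^{2p}]\le 2^p p!$ and the same exponential-Markov step, rather than a direct Gaussian tail bound.

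You observe that $|b_k^*[x,\mu]|\le b^*_{\max}$ for every $\mu$, so the same pathwise domination holds for the McKean--Vlasov solution. Both parts then reduce to the tail of $W_t\sim\mathcal{N}(0,t)$. What your route buys:
\begin{itemize}
\item a self-contained argument with no external moment lemma;
\item an explicit constant $\mathfrak{C}=\min\bigl(1/8,\log 2/(4c_0^2)\bigr)$;
\item a tail estimate that uses only the boundedness in Assumption~\ref{ass:bsup_et_lip_in_x_and_y};
\item transparent uniformity in $N$ and $t$.
\end{itemize}

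The price is the case split $A_N\ge 2c_0$ versus $A_N<2c_0$, which you handle correctly by shrinking the constant. The paper's moment-generating-function route avoids this split. There the shift $c_0$ is absorbed into the moment constant, so the Markov bound $2\exp(-A_N^2/(2\mathfrak{C}))$ holds at every threshold. The paper's argument for (1) also does not rely on the pathwise domination, so it is the one that would adapt if the drift were not bounded. Under the present assumption, nothing is lost with your approach.
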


The proof of Lemma \ref{lem:ineq-concentration-Xt} is standard. For completeness, we provide the main steps below.

\begin{proof}[Proof of Lemma \ref{lem:ineq-concentration-Xt}]
(1) By \citet[Lemma 20]{hoffmann2021}, for every integer $p\geq1$, every $k\in\cY$ and every $t\in[0,1]$, we have $\E \big[|\xi_t^{[k]}|^{2p}\big]\leq p!\,\mathfrak{C}^p$. Therefore, $\E\big[\exp (\tfrac{1}{2\mathfrak{C}}|\xi_t^{[k]}|^{2})\big]\leq 2$. Hence, by Markov's inequality, 
\begin{align}
    \P \big(\big|\xi_t^{[k]}\big|>A_N\big)&=\P\Big(\exp (\tfrac{1}{2\mathfrak{C}}\big|\xi_t^{[k]}\big|^{2}\big)> \exp \big(\tfrac{1}{2\mathfrak{C}}A_N^2\big)\Big)\leq \exp \big(-\tfrac{1}{2\mathfrak{C}}A_N^2\big)\E \big[\exp \big(\tfrac{1}{2\mathfrak{C}}|\xi_t^{[k]}|^{2}\big)\big]\nonumber\\
    &\leq 2 \exp \big(-\tfrac{1}{2\mathfrak{C}}A_N^2\big). \nonumber
\end{align}
Up to a change in the value of the generic constant $\mathfrak C$, this yields $\P\big(\, \big|\xi^{[k]}_t\big| > A_N \,\big) \leq 2\exp\big(-\mathfrak{C}{A_N^2}\big).$\smallskip

\noindent(2) By the definition of the particle system in \eqref{eq:sys} and Assumption \ref{ass:bsup_et_lip_in_x_and_y}, for every $k\in\cY$ and every $t\in[0, 1]$, we have $|X_t^{[k],1}|\leq |x_0|+b_{\max}^{*}+|W_t^1|$. Moreover, as $W_t^1\sim\mathcal{N}(0,t)$, we have \[\E \Big[\big|W_t^1\big|^{2p}\Big]=t^p\frac{(2p)!}{2^p p!}\leq\frac{(2p)!}{2^p p!}=\binom{2p}{p}\frac{p!}{2^p}\leq 2^p p!.\] Therefore, $\E \big[|X_t^{[k],1}|^{2p}\big]\leq p! \,\mathfrak{C}^p$ and the remaining argument is identical to that used in the proof of part (1).
\end{proof}


\subsection{Proofs of Section \ref{sec:maths}}\label{subsec:proof-sec-2}

\subsubsection{Proof of Proposition \ref{prop:chaos}}


\paragraph{Point $(a)$.} For each label $k \in \mathcal{Y}$, we define a non-interacting particle system $(\xi^{[k], 1}, ..., \xi^{[k], N})=(\xi_t^{[k], 1}, ..., \xi_t^{[k], N})_{t\in[0,1]}$ given by the following equations:
\begin{equation}\label{eq:sys-without-interaction}
\dd \xi_t^{[k], n}=b_k^{*}\big[\xi_t^{[k], n}, \mu_t^{[k]}\big] \dd t+  \dd W_t^{ n}\quad\text{with}\quad\xi_0^{[k], n}=x_0,\quad 1 \leq n \leq N,\\
\end{equation}
where $(\mu_t^{[k]})_{t\in[0,1]}$ are marginal distributions of the unique solution $(\xi_t^{[k]})_{t\in[0,1]}$ defined by \eqref{eq:mkv} given label $Y=k$, and $W^{ n}=(W^{ n}_t)_{t\in[0,1]}, \,1\leq k\leq K, 1\leq n\leq N$, are the same i.i.d. Brownian motions as defined in \eqref{eq:sys}. Additionally, we define the following empirical measures
\begin{equation}\label{eq:empirical-xi}
    \nu_{t}^{[k],N}\coloneqq \frac{1}{N}\sum_{n=1}^N\delta_{\xi_t^{[k], n}}, \;t\in[0,1].
\end{equation}
Remark that the particles $(\xi^{[k], 1}, ..., \xi^{[k], N})$ defined by \eqref{eq:sys-without-interaction} are i.i.d. having the same distribution as  the unique solution $(\xi_t^{[k]})_{t\in[0,1]}$ defined by \eqref{eq:mkv} given label $Y=k$.
It follows that, for every $p\geq 1$, $t\in[0,1]$,
\begin{align}
&\left\Vert  \sup_{s\in[0,t]}\left|X_s^{[k], n} - \xi_s^{[k], n}\right|\right\Vert_p
\leq \left\Vert  \sup_{s\in[0,t]}\left|\int_{0}^s\left[ b_k^{*}\big[X_u^{[k], n}, \mu_u^{[k], N}\big] -b_k^{*}\big[\xi_u^{[k], n}, \mu_u^{[k]}\big] \right]\dd u\right|\right\Vert_p\nonumber\\
&\quad \leq \left\Vert \int_{0}^t\left| b_k^{*}\big[X_u^{[k], n}, \mu_u^{[k], N}\big] -b_k^{*}\big[\xi_u^{[k], n}, \mu_u^{[k]}\big] \right|\dd u\right\Vert_p\leq \int_{0}^t\left\Vert  b_k^{*}\big[X_u^{[k], n}, \mu_u^{[k], N}\big] -b_k^{*}\big[\xi_u^{[k], n}, \mu_u^{[k]}\big] \right\Vert_p\dd u\nonumber\\
&\quad \leq L\int_{0}^{t}\left[ \left\Vert X_u^{[k], n}- \xi_u^{[k], n}\right\Vert_p +\left\Vert\mathcal{W}_1\left(\mu_u^{[k], N},  \mu_u^{[k]}\right)\right\Vert_p \right]\dd u \nonumber\\
&\quad \leq  L\int_{0}^{t}\left\Vert \sup_{v\in[0,u]} \left|X_v^{[k], n}- \xi_v^{[k], n}\right|\right\Vert_p \dd u + L \int_{0}^{t}\left\Vert\mathcal{W}_1\left(\mu_u^{[k], N},  \mu_u^{[k]}\right)\right\Vert_p \dd u\nonumber
\end{align}
where the third inequality follows from the  generalized Minkowski inequality (see e.g. \cite[Section 7.8.1]{pages-numerical-proba}). Hence, by applying the Gronwall's inequality, we get 
\begin{equation}\label{eq:first-gronwall}
    \left\Vert  \sup_{s\in[0,t]}\left|X_s^{[k], n} - \xi_s^{[k], n}\right|\right\Vert_p\leq L\exp(L)\int_{0}^{t}\left\Vert\mathcal{W}_1\left(\mu_u^{[k], N},  \mu_u^{[k]}\right)\right\Vert_p \dd u
\end{equation}
Moreover, remark that $\frac{1}{N}\sum_{i=1}^N\delta_{\big(X_t^{[k], n}, \,\xi_t^{[k], n}\big)}$ is a coupling of $\nu_{t}^{[k],N}$ defined by \eqref{eq:empirical-xi} and $\mu_t^{[k], N}$ defined by \eqref{eq:sys}. 
Hence, for every $p\geq 1$ and for every $t\in[0,1]$,
\begin{align}\label{eq:inequality-coupling}
\mathcal{W}_p^p\left( \mu_t^{[k], N}, \nu_{t}^{[k],N}\right)\leq \frac{1}{N}\sum_{i=1}^N\left| X_t^{[k], i}-\xi_t^{[k], i}\right|^p,\quad \text{a.s.},
\end{align}
and consequently, 
\begin{align}\label{eq:inequality-coupling-2}
\left\Vert\mathcal{W}_p\left( \mu_t^{[k], N}, \nu_t^{[k], N}\right)\right\Vert_p\leq \left\Vert X_t^{[k], 1}-\xi_t^{[k], 1}\right\Vert_p. 
\end{align}
Therefore, for every $p\geq 1$, $t\in[0,1]$, 
\begin{align}
\sup_{s\in[0,t]}&\left \Vert  \mathcal{W}_p\left(\mu_s^{[k], N}, \mu_s^{[k]} \right)\right\Vert_p \leq \sup_{s\in[0,t]}\left \Vert  \mathcal{W}_p\left(\mu_s^{[k], N}, \nu_s^{[k], N}\right)\right\Vert_p + \sup_{s\in[0,t]}\left \Vert  \mathcal{W}_p\left(\nu_s^{[k], N}, \mu_s^{[k]} \right)\right\Vert_p\nonumber\\
&\leq \sup_{s\in[0,t]}\left \Vert  X_s^{[k], 1}-\xi_s^{[k], 1}\right\Vert_p + \sup_{s\in[0,t]}\left \Vert  \mathcal{W}_p\left(\nu_s^{[k], N}, \mu_s^{[k]} \right)\right\Vert_p\nonumber\\
&\leq \left \Vert \sup_{s\in[0,t]}\left| X_s^{[k], 1}-\xi_s^{[k], 1}\right|\right\Vert_p + \sup_{s\in[0,t]}\left \Vert  \mathcal{W}_p\left(\nu_s^{[k], N}, \mu_s^{[k]} \right)\right\Vert_p\nonumber\\
&\leq
L\exp(L)\int_{0}^{t}\sup_{v\in[0,u]}\left\Vert\mathcal{W}_p\left(\mu_v^{[k], N},  \mu_v^{[k]}\right)\right\Vert_p \dd u+\sup_{s\in[0,t]}\left \Vert  \mathcal{W}_p\left(\nu_s^{[k], N}, \mu_s^{[k]} \right)\right\Vert_p,\nonumber
\end{align}
where the second inequality comes from \eqref{eq:inequality-coupling-2} and the last inequality follows from \eqref{eq:first-gronwall} and the fact that $\mathcal{W}_1(\mu, \nu)\leq \mathcal{W}_p(\mu, \nu)$ for every $\mu, \nu\in\mathcal{P}_p(\R^d)$ (see e.g. \cite[Remark 6.6]{villani2008optimal}). By applying again Gronwall's inequality, we get for every $p\geq 1$, 
\begin{align}
\sup_{s\in[0,1]}&\left \Vert  \mathcal{W}_p\left(\mu_s^{[k], N}, \mu_s^{[k]} \right)\right\Vert_p \leq \exp(L\exp(L))\sup_{s\in[0,1]}\left \Vert  \mathcal{W}_p\left(\nu_s^{[k], N}, \mu_s^{[k]} \right)\right\Vert_p.
\end{align}
Finally, as $\nu_{t}^{[k], N}$ is the empirical measure generated by the i.i.d. sample of random variable having the same distribution as $\mu_t^{[k]}$, and $\left\Vert \xi_t^{k}\right\Vert_q\leq \mathfrak{C}$ for every $q\geq1$ (see Proposition \ref{prop:pre-result1}), one can apply \citet[Theorem 1]{fournier2015on} with $d=1$ and $p\geq 1$,  and get the following inequality for every fixed $N\geq1$,
\begin{align}\label{eq:wp-cvg-chaos}
\sup_{s\in[0,1]}\E \left[ \mathcal{W}_p^p\left(\mu_s^{[k], N}, \mu_s^{[k]} \right)\right]\leq C N^{-\frac{1}{2}} , 
\end{align}
where the constant $C$ above depends on the constant $\mathfrak{C}$ in Proposition \ref{prop:pre-result1} and $p$. Finally, combine \eqref{eq:wp-cvg-chaos} with \eqref{eq:first-gronwall}, we get 
\begin{equation}
    \E\left[ \sup_{s\in[0,1]}\left|X_s^{[k], n} - \xi_s^{[k], n}\right|^p\right]\leq \wt{C} N^{-\frac{1}{2}},
\end{equation}
where the constant $\wt{C}$ above depends on the constant $\mathfrak{C}$ in Proposition \ref{prop:pre-result1} and $p$.

\paragraph{Point $(b)$.}


For a fixed class \(Y=k\) and fixed number of particles $N$, the particle system \eqref{eq:sys} can be rewritten as
\begin{align}\label{eq:sys-to-lacker23}
\dd X_t^{[k],n}&=\frac{1}{N}\sum_{n'=1}^N 
b_k^{*}\!\left(X_t^{[k],n},X_t^{[k],n'}\right)\dd t
+ \dd W_t^{n}\nonumber\\
&=\frac{1}{N-1}\sum_{n'=1, n'\neq n}^N 
\frac{N-1}{N} b_k^{*}\!\left(X_t^{[k],n},X_t^{[k],n'}\right)\dd t + \frac{1}{N}b_k^{*}\!\left(X_t^{[k],n},X_t^{[k],n}\right)\dd t+ \dd W_t^{n}.
\end{align}
Define $\mathfrak{b}_0(x)=\frac{1}{N}b_k^*(x,x)$ and  $\mathfrak{b}(x,y)=\frac{N-1}{N}b_k^*(x,y)$.  Then $\mathfrak{b}_0$ and $\mathfrak{b}$ are bounded, since  the functions $b_k^*, k\in\mathcal{Y}$ are assumed to be bounded under Assumption \ref{ass:bsup_et_lip_in_x_and_y}. Hence, one can apply \citet[Theorem 2.14]{Lacker2023Hierarchies} and  Pinsker’s inequality to obtain 
\begin{align}
&d_{\mathrm{TV}}\big(\mathrm{Law}(X^{[k], 1}, \ldots, X^{[k], l}), ({\mu^{[k]}})^{\otimes l}\big) \leq \mathfrak{C}\frac{l}{N}. 
\end{align}

\begin{lemma}\label{lem:application-TV-distance-and-Bernstein}
Assume that Assumption~\ref{ass:bsup_et_lip_in_x_and_y} holds. Given \( Y = k \in \mathcal{Y} \), let \( \xi^{[k]} = (\xi_t^{[k]})_{t \in [0,1]} \) be the unique solution to the McKean--Vlasov Equation~\eqref{eq:mkv}, and let \( \mu^{[k]} \) denote its law, with marginal distributions \( (\mu_t^{[k]})_{t \in [0,1]} \). Let \( (X_t^{[k],1}, \ldots, X_t^{[k],N})_{t \in [0,1]} \) be the particle system defined by~\eqref{eq:sys} and let $\mu_t^{[k], N}=\frac{1}{N}\sum_{n=1}^{N}\delta_{X_t^{[k],n}}$, $t\in[0,1]$. Let $\phi:\R\rightarrow\R$ be a bounded measurable function. Then, for every fixed $k\in\mathcal{Y}$ and for every $t\in[0,1]$, the following estimates hold,
\begin{enumerate}
    \item[$(a)$] 
$\displaystyle \left|\E \left[  \frac{1}{N}\sum_{n=1}^{N} \phi(X_t^{[k],n})-\int_{\R}\phi(y)\mu_t^{[k]}(\dd y )\right]\right|\leq \mathfrak{C} \Vert \phi\Vert_{\sup} N^{-1}$;
\item[$(b)$]
$\displaystyle \E \left[  \left|\frac{1}{N}\sum_{n=1}^{N} \phi(X_t^{[k],n})-\int_{\R}\phi(y)\mu_t^{[k]}(\dd y )\right|\right]\leq \mathfrak{C} \Vert \phi\Vert_{\sup} N^{-\frac{1}{2}}$;
\item[$(c)$]
$\displaystyle \E \left[  \left|\frac{1}{N}\sum_{n=1}^{N} \phi(X_t^{[k],n})-\int_{\R}\phi(y)\mu_t^{[k]}(\dd y )\right|^2\right]\leq \mathfrak{C} \Vert \phi\Vert_{\sup}^2 N^{-1}$.
\end{enumerate}
\end{lemma}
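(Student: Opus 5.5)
Throughout, write $S_N:=\frac{1}{N}\sum_{n=1}^{N}\phi(X_t^{[k],n})-\int_{\R}\phi\,\dd\mu_t^{[k]}$. The plan is to derive all three estimates from the total-variation propagation of chaos in Proposition~\ref{prop:chaos}(b), using the exchangeability of the particle system. Point (a) uses the bound with $\ell=1$; point (c) uses it with $\ell=2$; point (b) follows from (c) by Cauchy--Schwarz.

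\emph{Point (a).} By exchangeability of \eqref{eq:sys}, all particles $X_t^{[k],n}$ have the same law. Hence
\[
\E\left[\frac{1}{N}\sum_{n=1}^N\phi(X_t^{[k],n})\right]=\E\left[\phi(X_t^{[k],1})\right].
\]
The map $\omega\mapsto\phi(\omega_t)/\Vert\phi\Vert_{\sup}$ is a measurable function on path space with values in $[-1,1]$. The definition \eqref{eq:total-variation-distance} of $d_{\rm TV}$, applied to $\mathrm{Law}(X^{[k],1})$ and $\mu^{[k]}$, then gives
\[
\left|\E[\phi(X_t^{[k],1})]-\int_{\R}\phi\,\dd\mu_t^{[k]}\right|\le 2\Vert\phi\Vert_{\sup}\,d_{\rm TV}\big(\mathrm{Law}(X^{[k],1}),\mu^{[k]}\big).
\]
Proposition~\ref{prop:chaos}(b) with $\ell=1$ bounds the right-hand side by $\mathfrak{C}\Vert\phi\Vert_{\sup}N^{-1}$.

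\emph{Point (c).} Set $\bar\phi:=\phi-\int_{\R}\phi\,\dd\mu_t^{[k]}$, which satisfies $\Vert\bar\phi\Vert_{\sup}\le 2\Vert\phi\Vert_{\sup}$. Expanding the square gives
\[
\E[S_N^2]=\frac{1}{N^2}\sum_{n}\E\big[\bar\phi(X_t^{[k],n})^2\big]+\frac{1}{N^2}\sum_{n\neq n'}\E\big[\bar\phi(X_t^{[k],n})\bar\phi(X_t^{[k],n'})\big].
\]
The diagonal part is at most $4\Vert\phi\Vert_{\sup}^2N^{-1}$. For the off-diagonal part, exchangeability reduces every term to the single pair $(X_t^{[k],1},X_t^{[k],2})$, so the off-diagonal sum is at most $\big|\E[\bar\phi(X_t^{[k],1})\bar\phi(X_t^{[k],2})]\big|$. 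Under $(\mu^{[k]})^{\otimes 2}$ the two coordinates are independent and each $\bar\phi$ has mean zero, so this expectation would vanish. The function $(\omega,\omega')\mapsto\bar\phi(\omega_t)\bar\phi(\omega'_t)$ is bounded by $4\Vert\phi\Vert_{\sup}^2$. Proposition~\ref{prop:chaos}(b) with $\ell=2$ therefore bounds the difference between the true expectation and $0$ by $8\Vert\phi\Vert_{\sup}^2\,\mathfrak{C}\,\tfrac{2}{N}$. Altogether $\E[S_N^2]\le\mathfrak{C}\Vert\phi\Vert_{\sup}^2N^{-1}$.

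\emph{Point (b).} By Cauchy--Schwarz, $\E|S_N|\le(\E S_N^2)^{1/2}\le\mathfrak{C}\Vert\phi\Vert_{\sup}N^{-1/2}$.

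The only delicate step is the cross term in (c). A naive approach would bound each covariance through $\ell=N$-fold chaos, which gives nothing useful. The key is to use exchangeability to reduce to the pair $\ell=2$ and to exploit the centering of $\bar\phi$ under the product measure, so that the $O(1/N)$ two-particle TV error controls all $N^2$ cross terms at once.
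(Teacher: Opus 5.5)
Your proof is correct. Part (a) is exactly the paper's argument: exchangeability plus the case $\ell=1$ of Proposition~\ref{prop:chaos}(b). For (b) and (c), however, you take a genuinely different route. The paper does not use propagation of chaos there. Instead it imports the Bernstein-type concentration inequality of Theorem~18 in \cite{hoffmann2021}, whose tail is of order $\kappa_1\exp\bigl(-\kappa_2 Nx^2/(\|\phi\|^2_{L^2(\mu_t^{[k]})}+\|\phi\|_{\infty}x)\bigr)$. It then obtains (b) and (c) by integrating this tail bound, at level $x$ for (b) and at level $\sqrt{x}$ for (c). You instead do a second-moment computation. You expand the square and use exchangeability to reduce all $N(N-1)$ cross terms to the single pair $(X_t^{[k],1},X_t^{[k],2})$. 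You observe that $\bar\phi\otimes\bar\phi$ is centered under $(\mu^{[k]})^{\otimes 2}$, and you control the discrepancy with the $\ell=2$ total-variation bound. Then (b) follows from (c) by Cauchy--Schwarz rather than by a separate tail integration. Your argument is more elementary and self-contained, since it needs nothing beyond Proposition~\ref{prop:chaos}(b), which is already in the paper. Note, however, that it hinges on the sharp $O(\ell/N)$ total-variation rate coming from \cite{Lacker2023Hierarchies}. With the classical $O(\sqrt{\ell/N})$ rate, obtained from the global relative-entropy bound plus Pinsker, your cross-term bound would only be $O(N^{-1/2})$ and would not give (c). What the paper's route buys is exponential tails with a variance-adaptive constant. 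These are not needed for the lemma itself, but the paper really relies on them later. In the proofs of Theorem~\ref{theo:riskbhatMkk} and Lemma~\ref{lem:boundIntegrated}, the same inequality is combined with union bounds over $\varepsilon$-nets. There, polynomial moment bounds such as (b)--(c) would cost a factor polynomial in the covering number instead of its logarithm.
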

\begin{proof}[Proof of Lemma \ref{lem:application-TV-distance-and-Bernstein}]
For $(a)$, we have from Proposition~\ref{prop:chaos} that
\begin{align}
 & \left|\E \left[ \frac{1}{N}\sum_{n=1}^{N} \phi(X_t^{[k],n})-\int_{\R}\phi(y)\mu_t^{[k]}(\dd y )\right] \right|= \left|\E \left[  \frac{1}{N}\sum_{n=1}^{N} \phi(X_t^{[k],n})-\frac{1}{N}\sum_{n=1}^{N}\int_{\R}\phi(y)\mu_t^{[k]}(\dd y )\right]\right|\nonumber\\
    &\leq \left|\frac{1}{N}\sum_{n=1}^{N} \left\{\E \left[  \phi(X_t^{[k],n})\right]-\int_{\R}\phi(y)\mu_t^{[k]}(\dd y )\right\}\right|\leq 2\Vert \phi\Vert_{\sup} d_{\mathrm{TV}}\Big(\mathrm{Law}(X_t^{[k],1}), \mu_t^{[k]}\Big)\leq 2\mathfrak{C}\Vert \phi\Vert_{\sup} N^{-1}. \nonumber
\end{align}
For $(b)$, by Theorem 18 in \cite{hoffmann2021}, there exist constants $\kappa_1,\kappa_2>0$ depending on $x_0$, $\|b\|_{\infty}$ such that for all $t\in[0,T]$ and all $k\in\mathcal{Y}$,
\begin{align}
    \mathbb{P}\left( \frac{1}{N}\sum_{n=1}^N\phi\big({X}_{t}^{[k], n}\big)-\int_{\R} \phi(y) \mu^{[k]}_{t}(dy)\geq x \right)\leq \kappa_1\exp\left( -\kappa_2 \frac{Nx^2}{|\phi|^2_{L^2({\mu_t^{[k]}})}+|\phi|_{\sup}x}\right). 
\end{align}
Therefore, 
\begin{align}
    &\mathbb{E}\left[\left|\frac{1}{N}\sum_{n=1}^N\phi\big({X}_{t}^{[k], n}\big)-\int_{\R} \phi(y) \mu^{[k]}_{t}(dy)\right|\right]=\int_{0}^{\infty}\mathbb{P}\left( \left|\frac{1}{N}\sum_{n=1}^N\phi\big({X}_{t}^{[k], n}\big)-\int_{\R} \phi(y) \mu^{[k]}_{t}(dy)\right|\geq x \right) \dd x\nonumber\\
    &\leq 2\int_{0}^{\infty}\kappa_1\exp\left( -\kappa_2 \frac{Nx^2}{\|\phi\|^2_{L^2(\mu_t)}+\|\phi\|_{\infty}x}\right) \dd x \leq \frac{\mathfrak{C}{\|\phi\|_{\infty}}}{\sqrt{N}}. 
\end{align}
Similarly, for $(c)$, we have
\begin{align}
    &\mathbb{E}\left[\left|\frac{1}{N}\sum_{n=1}^N\phi\big({X}_{t}^{[k], n}\big)-\int_{\R} \phi(y) \mu^{[k]}_{t}(dy)\right|^2\right]=\int_{0}^{\infty}\mathbb{P}\left( \left|\frac{1}{N}\sum_{n=1}^N\phi\big({X}_{t}^{[k], n}\big)-\int_{\R} \phi(y) \mu^{[k]}_{t}(dy)\right|^2\geq x \right) \dd x\nonumber\\
    &=\int_{0}^{\infty}\mathbb{P}\left( \left|\frac{1}{N}\sum_{n=1}^N\phi\big({X}_{t}^{[k], n}\big)-\int_{\R} \phi(y) \mu^{[k]}_{t}(dy)\right|\geq \sqrt{x} \right) \dd x\nonumber\\
    &\leq 2\int_{0}^{\infty}\kappa_1\exp\left( -\kappa_2 \frac{Nx}{\|\phi\|^2_{L^2(\mu_t)}+\|\phi\|_{\infty}\sqrt{x}}\right) \dd x \leq \frac{\mathfrak{C}{\|\phi\|_{\infty}^2}}{{N}}. \nonumber\hfill\qedhere
\end{align}
\end{proof}

\color{black}

\subsection{Proof of Section \ref{sec:classif}}\label{proofs:classif}

\subsubsection{Proof of Proposition \ref{prop:excessriskTerm13}} 

The goal is to bound the error term 
$\E[ \cR_{(\bar{Z}^{[Y]})}(\w{g})]- \E[ \cR_{(\bar{\xi}^{[Y]})}(\w{g})]$ following Equation~\eqref{eq:decomposition-error}. Recall that, for two probability measures $\mu$ and $\nu$  on $\mathcal{C}([0,1], \mathbb{R})$, equipped with the Borel $\sigma$-algebra induced by the supremum norm $\|\cdot\|_{\infty}$, the total variation distance $d_{\mathrm{TV}}(\mu,\nu)$ is defined in \eqref{eq:total-variation-distance}. If $X_1$ and $X_2$ are  random variables with respective distributions $\mu$ and $\nu$, we write, with a slight abuse of notation, $d_{\mathrm{TV}}(X_1, X_2)$ for $d_{\mathrm{TV}}(\mu,\nu)$.

Now we introduce a finite-dimensional projection $\mathrm{Proj}_{\Delta}:\mathcal{C}([0,1], \R) \rightarrow \R^{M+1}$, which maps a the continuous trajectory to its discrete-time observations on the grid associated with $\Delta$. 
The projection $\mathrm{Proj}_{\Delta}$ is defined as follows:
\begin{equation}\label{eq:def-projection-delta}
\alpha\in\mathcal{C}\big([0,1], \R\big)\mapsto \mathrm{Proj}_{\Delta}(\alpha)=(\alpha_0,\alpha_{\Delta}, \alpha_{2\Delta}, ..., \alpha_{1})\in\R^{M+1}.  
\end{equation} 
Let $g^{\Delta}: \R^{M+1}\rightarrow \cY$ be an arbitrary measurable classifier. 
It follows from the definition of the risk function in Section \ref{subsec:Stasetting} that
\begin{align}
    \cR_{(\bar{Z}^{[Y]})}(g^{\Delta}) &= \mathbb{P}\left(g^{\Delta}(\bar{Z}^{[Y]}) \neq Y\right)=\mathbb{P}\left(g^{\Delta}\big(\mathrm{Proj}_{\Delta}({Z}^{[Y]})\big) \neq Y\right)=\E \left[\E\left[\one_{\{g^{\Delta}(\mathrm{Proj}_{\Delta}({Z}^{[Y]})) \neq Y\}}\;\big|\;Y\right]\right]\nonumber\\
    &=\sum_{k=1}^{K}\frac{1}{K}\E\left[\one_{\{g^{\Delta}(\mathrm{Proj}_{\Delta}({Z}^{[k]})) \neq k\}}\;\big|\;Y=k\right]=\sum_{k=1}^{K}\frac{1}{K}\E\left[\one_{\{g^{\Delta}(\mathrm{Proj}_{\Delta}({Z}^{[k],1})) \neq k\}}\right]
\end{align}
where \( Z^{[k],1} \) denotes the first particle in the system defined by~\eqref{eq:sys}, conditioned on the label \( Y = k \), with system size \( \Nt\).
Similarly, 
\begin{align}
\cR_{(\bar{\xi}^{[Y]})}({g^{\Delta}}) =\frac{1}{K}\sum_{k=1}^{K} \E \left [\one_{\{g^{\Delta}(\mathrm{Proj}_\Delta(\xi^{[k]})) \neq k\}}\right].
\end{align}
where $\xi^{[k]}$ is a strong solution of \eqref{eq:mkv} given \( Y = k \). 
Hence, 
\begin{align}
\left |\mathcal{R}_{(\bar{Z}^{[Y]})}(g^{\Delta})-\mathcal{R}_{(\bar{\xi}^{[Y]})}(g^{\Delta})\right|&\leq \frac{1}{K} \sum_{k=1}^{K}   \Big|\E \left [\one_{\{g^{\Delta}(\mathrm{Proj}_\Delta(Z^{[k],1})) \neq k\}}\right]-\E \left [\one_{\{g^{\Delta}(\mathrm{Proj}_\Delta(\xi^{[k]}))\neq k\}}\right]\Big|\nonumber\\
&\leq 2 d_{\mathrm{TV}}(Z^{[k],1}, \xi^{[k]}), \nonumber
\end{align}
where the last inequality comes from the definition~\eqref{eq:total-variation-distance} by considering the function 
\[f:\alpha \in\mathcal{C}([0,1], \R)\mapsto f(\alpha)\coloneqq\one_{\{g^{\Delta}(\mathrm{Proj}_\Delta(\alpha))\neq k\}}\in\{0,1\}.\]
Applying the strong propagation of chaos property stated in Proposition~\ref{prop:chaos}-(b) with $l=1$, we obtain
\begin{equation}\label{eq:controlexcessT1}
  \left|\cR_{(\bar{Z}^{[Y]})}({g^{\Delta}})-  \cR_{(\bar{\xi}^{[Y]})}({g^{\Delta}})\right|\leq \mathfrak{C}\Nt^{-1},
\end{equation}
where the right hand side is independent of $\Delta$. 

Finally, since the test observations are independent of the training sample \(\mathcal D_N\) and conditionally on \(\mathcal D_N\), the classifier \(\widehat g\) is fixed. We can therefore apply the previous inequality \(\eqref{eq:controlexcessT1}\) to \(\widehat g\), which yields
\begin{align}
\E\big[ \cR_{(\bar{Z}^{[Y]})}(\w{g})\big]- \E\big[ \cR_{(\bar{\xi}^{[Y]})}(\w{g})\big]\leq \E\Big[ \E\Big[\,\Big|\cR_{(\bar{Z}^{[Y]})}(\w{g})-  \cR_{(\bar{\xi}^{[Y]})}(\w{g})\Big| \;\Big|\;\mathcal{D}_N\,\Big]\,\Big]\leq \mathfrak{C}\Nt^{-1}.
\end{align}

\begin{remark}[About the link between $\cR^*$ and $\cR^*_{\Nt}$]\label{rem:excessriskNtest}
First, consider an arbitrary measurable classifier $g: \mathcal{C}([0,1], \R)\rightarrow \cY$. By an argument similar to that used in the proof of Proposition \ref{prop:excessriskTerm13}, we have 
\begin{equation}\label{eq:continue-ntest-cvg}
 \left|\cR_{({Z}^{[Y]})}(g)-  \cR_{({\xi}^{[Y]})}({g})\right|\leq \mathfrak{C}\Nt^{-1}.   
\end{equation}
Indeed, for each $k\in\cY$, Proposition~\ref{prop:chaos}-(b) can be applied to the measurable function \[f:\alpha \in\mathcal{C}([0,1], \R)\mapsto f(\alpha)\coloneqq\one_{\{g(\alpha)\neq k\}}\in\{0,1\},\]
and the conclusion follows by conditioning on \(Y=k\) and summing over $k\in\cY$.

Now, let $g^*\in \argmin{g \in \mathcal{G}}\mathcal{R}_{(\xi^{[Y]})}(g)$  and $g^{*}_{\Nt} \in \argmin{g \in \mathcal{G}}\mathcal{R}_{({Z}^{[Y]})}(g)$. If $\mathcal{R}^*\geq\mathcal{R}^{*}_{\Nt} $, we have 
\begin{align}
    \left|\mathcal{R}^*-\mathcal{R}^*_ {\Nt}\right|&=\mathcal{R}^*-\mathcal{R}^*_{\Nt}=\mathcal{R}_{(\xi^{[Y]})}(g^{*})-\mathcal{R}_{({Z}^{[Y]})}(g^{*}_{\Nt})\nonumber\\
    &\leq \big|\mathcal{R}_{(\xi^{[Y]})}(g^{*}_{\Nt})-\mathcal{R}_{({Z}^{[Y]})}(g^*_{\Nt})\big|.\nonumber
\end{align}
Similarly, if $\mathcal{R}^*\leq\mathcal{R}^*_{\Nt}$,
\begin{align}
    \left|\cR^*-\cR^*_{\Nt}\right|&=\cR^*_{\Nt}-\cR^*=\cR_{({Z}^{[Y]})}(g^*_{\Nt})-\cR_{(\xi^{[Y]})}(g^{*})\nonumber\\
    &\leq \big|\cR_{(\xi^{[Y]})}(g^{*})-\cR_{({Z}^{[Y]})}(g^{*})\big|.\nonumber
\end{align}
We can then conclude by applying \eqref{eq:continue-ntest-cvg} to $g^*$ and $g_{N_{\mathrm{test}}^*}$, and obtain
$|\cR^{*}-\cR^{*}_{\Nt}| \leq \mathfrak{C} \Nt^{-1}.$
\end{remark}

\subsubsection{Proof to Proposition \ref{prop:bayes}}

The proof of Proposition \ref{prop:bayes} is divided into 2 steps: in the first step, we prove that there exists a reference probability measure $\P_0$ on the filtered probability space $(\Omega, \mathcal{F}, (\mathcal{F}_t)_{t\in[0,1]})$ and an $(\mathcal{F}_t)$-standard Brownian motion $(W^0_t)_{t\in[0,1]}$ under $\P_0$ such that the process 
\begin{equation}\label{eq:ref-process}
\wt{\xi}=(\wt{\xi}_t)_{t\in[0,1]}\coloneqq (x_0+W_t^0)_{t\in[0,1]}
\end{equation}  
is the unique strong solution to \eqref{eq:mkv} on $(\Omega, \mathcal{F}, (\mathcal{F}_t)_{t\in[0,1]}, \mathbb{P}_k)$; in the second step, we prove \eqref{eq:pi}, which is a reproduction from the proof of \cite[Proposition 1]{denis2020consistent} for reader's convenience. \\

\noindent {\sc Step 1.} Recall that Assumption \ref{ass:bsup_et_lip_in_x_and_y} implies the existence and strong uniqueness of the McKean-Vlasov equations \eqref{eq:mkv} given $Y=k\in\{1, ..., K\}$. For every fixed $k$,  let $(\mu_t^{[k]})_{t\in[0,1]}$ denote the marginal distributions of the unique solution $(\xi_t^{[k]})_{t\in[0,1]}$ of \eqref{eq:mkv} given $Y=k$, in the sense that $\mu_t^{[k]}=\P_k\circ (\xi_t^{[k]})^{-1},\;t\in[0,1]$. 

Consider a probability measure $\P_0$ on the filtered probability space $(\Omega, \mathcal{F}, (\mathcal{F}_t)_{t\in[0,1]})$ and an $(\mathcal{F}_t)$-standard Brownian motion $(W^0_t)_{t\in[0,1]}$ under $\P_0$. 
The process $(\Phi_{t}^{k})_{t\in[0,1]}$ defined by  
\begin{equation}\label{eq:def_Phi-func}
\Phi_{t}^{k}\coloneqq\mathcal{E}_t\Big(\int_{0}^{\cdot}b^*_k\big[\wt{\xi}_s, \mu_s^{[k]}\big]\dd W_s^0\;\Big), \;t\in[0,1],
\end{equation}
is a martingale \cite[Corollary 5.13]{karatzas1991brownian}, where $\mathcal{E}_t(M)\coloneqq\exp(M_t-\frac{1}{2}\langle M\rangle_t)$ for any continuous martingale $M$. For every $k\in\mathcal{Y}$, 
we define probability measures ${\P}_{k}$ on $(\Omega, \mathcal{F}, (\mathcal{F}_t)_{t\in[0,1]})$ as follows 
\begin{equation}\label{eq:defPk}
\frac{\dd {\P}_k}{\dd\P_0}\Big|_{\cF_t}\coloneqq \Phi_{t}^{k},\quad t\in[0,1].
\end{equation}
 By Girsanov's Theorem (see e.g. \cite[Theorem 5.1]{karatzas1991brownian}), the following process 
\begin{equation}\label{eq:def-change-BM}
W^{[k]}_t\coloneqq W_t^0-\int_{0}^{t} b^*_k\big[ \wt{\xi}_s, \mu_s^{[k]}\big] \dd s,\quad t\in[0,1]
\end{equation}
is a Brownian motion on  $(\Omega, \mathcal{F}, (\mathcal{F}_t)_{t\in[0,1]}, {\P}_k)$ and 
\begin{equation}
\wt{\xi}_t=x_0+W_t^{0}=x_0+\int_{0}^{t}b^*_k\big[\wt{\xi}_s, \mu_s^{[k]}\big]\dd s + W_t^{[k]}, \;t\in[0,1].
\end{equation}
Hence, $(\wt{\xi}_t)_{t\in[0,1]}$ is the unique solution to \eqref{eq:mkv} under $\P_k$ due to the strong uniqueness of the solution to this equation. Remark that the probability measures $\P_0$ and ${\P}_k$ are 
mutually absolutely continuous in the sense that 
\[\forall\, F\in\mathcal{F}_T, \quad {\P}_k(F)=\E_{\P_0}\big[\Phi_{1}^{k}\one_{F}\big] \quad \text{and}\quad \P_0(F)= \E_{{\P}_k}\big[(\Phi_{1}^{k})^{-1}\one_{F}\big] \]
since we consider a finite time horizon $T=1$ and the density $\frac{d{\P}_k|_{\cF_1}}{d\P_0|_{\cF_1}}$ is strictly positive. Consequently, the definition of $\P_0$ can also be determined from $\P_k$, see e.g. the classical textbook \cite[Section 5.5]{legall2016brownian}. Now let ${\P}(\cdot):=\sum_{k=1}^{K}\dfrac{1}{K} {\P}_k(\cdot)$. The probability measures $\P_0$ and ${\P}$ are also
mutually absolutely continuous with the Radon-Nikodym derivative 
    $$\frac{\dd{\P}}{\dd\P_0}\Big|_{\cF_t}=\sum_{k=1}^{K}\dfrac{1}{K} \Phi_{t}^{k} $$
    and using Equations \eqref{eq:def_Phi-func} and \eqref{eq:def-change-BM},
\begin{align}\label{eq:RNderi}
\frac{\dd{\P}}{\dd\P_0}\Big|_{\cF_t}
&=\sum_{k=1}^{K}\dfrac{1}{K} \exp\left( \int_{0}^{t}b^*_k\big[ \wt{\xi}_s, \mu_s^{[k]}\big]\dd W_s^0-\frac{1}{2}\int_{0}^{t}\left(b^*_k\big[ \wt{\xi}_s, \mu_s^{[k]}\big]\right)^2\dd s \right)\nonumber\\
&=\sum_{k=1}^{K}\frac{1}{K} \exp\left( \int_{0}^{t}b^*_k\big[ \wt{\xi}_s, \mu_s^{[k]}\big]\dd W_s^{[k]}+\frac{1}{2}\int_{0}^{t}\left(b^*_k\big[ \wt{\xi}_s, \mu_s^{[k]}\big]\right)^2\dd s \right)\qquad 
\end{align} which is also strictly positive. Remark that the Brownian motion $W=(W_t)_{t\in[0,1]}$ in \eqref{eq:mkv} under $\P$ is still a Brownian motion under $\P_k$ as we assume that $W$ and the label $Y$ are independent.  Consequently, given the probability measure $\P$ on the space $(\Omega, \mathcal{F}, (\mathcal{F}_t)_{t\in[0,1]})$, the standard Brownian motion $(W_t)_{t\in[0,1]}$ under $\P$ and the unique solution $\xi=({\xi}_t)_{t\in[0,1]}$ to \eqref{eq:mkv}, the reference probability $\P_0$ and the Brownian motion $W^0=(W_t^0)_{t\in[0,1]}$ discussed above can be identified by using \eqref{eq:RNderi} (see also e.g. \cite[Section 5.5]{legall2016brownian}). 
\\

\noindent {\sc Step 2.} 
 Now we define for every $k\in\cY$, 
\begin{equation}\label{eq:defpsi}
\Psi^{k}_t\coloneqq \frac{\dd \P_k|_{\cF_t}}{\dd\P|_{\cF_t}}=\frac{\Phi_t^k\;\dd \P_0}{\sum_{k'=1}^{K}(1/K)\Phi_t^{k'}\;\dd\P_0}=\frac{\Phi_t^k}{\sum_{k'=1}^{K}(1/K)\Phi_t^{k'}},\quad t\in[0,1]
\end{equation}
with  $$\Phi_t^k=\exp\left( \int_{0}^{t}b^*_k\big[ \wt{\xi}_s, \mu_s^{[k]}\big]\dd W_s^{[k]}+\frac{1}{2}\int_{0}^{t}\left(b^*_k\big[ \wt{\xi}_s, \mu_s^{[k]}\big]\right)^2\dd s \right).$$  
Consequently, $\Phi_1^k= \exp \big(F^*_k(\xi)\big).$
Let us denote 
$\mathcal{F}_t^\xi= \sigma(\xi_s, s \in [0,t])$, $t \in [0,1]$.
For a bounded function $h: \cY\rightarrow \R$ and an $\cF_t^\xi$-measurable bounded random variable $Z$, we have, 
\begin{align}
\E \big[h(Y)Z\big]&=\E \big[h(Y)\E[Z\;|\;Y]\big]=\E \left[ \sum_{k=1}^{K}h(k)\mathbf{1}_{\{Y=k\}}\E_k[Z]\right]=\E \left[ \sum_{k=1}^{K}h(k)\mathbf{1}_{\{Y=k\}}\E[Z\Psi_t^k]\right]\nonumber\\
&=\dfrac{1}{K}\sum_{k=1}^{K}h(k)\E[Z\Psi_t^k]=\dfrac{1}{K}\E \left[ \Big( \sum_{k=1}^{K}h(k)\Psi_t^k\Big)Z\right],
\end{align}
which implies 
\[\E\big[\,h(Y)\,\big|\,\mathcal{F}_1^\xi\,\big]=\sum_{k=1}^{K}(1/K) h(k)\Psi^{k}_1 = \frac{\sum_{k=1}^{K}(1/K) h(k)\Phi^{k}_1}{\sum_{k'=1}^{K}(1/K)\Phi_1^{k'}},\quad \P-\text{a.s.}\]
by using the definition of $\Psi_{t}^{k}$ in \eqref{eq:defpsi}. 
Then we conclude the proof by taking $h(x)=\one_{\{k\}}(x), \;k\in\cY$. 


\subsubsection{Proof of Theorem \ref{theo:excessrisk}}\label{subsec:proof-thm-4.1}

Recall that $\xi^{[Y]}$ denotes the stochastic process defined in \eqref{eq:mkv}. 
When no ambiguity arises, we omit the superscript $^{[Y]}$ and simply write $\xi$. 
We also recall the projection $\mathrm{Proj}_{\Delta}$ defined in~\eqref{eq:def-projection-delta}.  By a slight abuse of notation, for any $\alpha\in\mathcal C([0,1],\R)$, we write
$\widehat F_k(\alpha)$, $\widehat \pi_k(\alpha)$, and $\widehat g(\alpha)$
for $\widehat F_k(\mathrm{Proj}_{\Delta}(\alpha))$,
$\widehat \pi_k(\mathrm{Proj}_{\Delta}(\alpha))$, and
$\widehat g(\mathrm{Proj}_{\Delta}(\alpha))$, respectively.

We have the following inequality by \cite[Proposition 2]{denis2020consistent}
$$
\E\left[\mathcal{R}_{(\bar{\xi}^{[Y]})}(\w{g}) - {\mathcal{R}^*}\right]    
\leq 2 \sum_{k=1}^K \E \left[ \left|\w{\pi}_k(\xi^{[Y]})-\pi^*_k(\xi^{[Y]})\right|\right],
$$  
then, let us denote $\pi_k^*(\xi)= \phi_k(F(\xi)):={\exp(F_k(\xi))}/{\sum_{k'}\exp(F_{k'}(\xi))}$. Thus,
$$
\E\left[\mathcal{R}_{(\xi^{[Y]})}(\w{g}) - \mathcal{R}^*\right]   
\leq  2\sum_{k=1}^K \E \left[ \left|\phi_k({\w{ F}}(\xi^{[Y]}))-\phi_k({ F}^*(\xi^{[Y]}))\right|\right]
$$
Since the softmax function $\phi_k$ is $1$-Lipschitz, we have for $k \in \mathcal{Y}$
\begin{equation*}
\E \left|\phi_k({{\w{ F}}}(\xi^{[Y]}))-\phi_k({F}(\xi^{[Y]}))\right| \leq 
\sum_{k'=1}^K\E\left[\left|{\w{F}}_{k'}(\xi^{[Y]})-{F}^*_{k'}(\xi^{[Y]}) \right|\right].  
\end{equation*}
We remind the reader that using that 
$\displaystyle \int_0^1 b_k^*[\xi_s^{[Y]}, \mu^{[k]}_s]\dd \xi_s^{[Y]} = \int_0^1 \int_\R b^*_k(\xi^{[Y]}_s, y)\mu^{[k]}_s(\dd y)\dd \xi_s^{[Y]},$
it yields
\begin{eqnarray}\label{eq:Fstar}
F^*_k(\xi^{[Y]})&:=&\int_{0}^{ 1}b_k^{*}[ \xi^{[Y]}_s, \mu_s^{[k]}]\dd \xi^{[Y]}_s-\frac{1}{2}\int_{0}^{1}{b^{*2}_{k}[ \xi^{[Y]}_s, \mu_s^{[k]}]\dd s}\nonumber\\
&=&\int_{0}^{ 1}\int_\R b_k^{*}( \xi^{[Y]}_s, y)\mu^{[k]}_s(\dd y)\dd \xi^{[Y]}_s-\frac{1}{2}\int_{0}^{1}\left(\int_{\R}b^{*}_{k}(\xi^{[Y]}_s, y)\mu^{[k]}_s(\dd y)\right)^2\dd s.
\end{eqnarray}
Then, we define the empirical counterpart of $F^*_k$ for the discretization of the time interval,
\begin{eqnarray}\label{eq:Fstarbar}
\bar{F}^*_k(\xi^{[Y]})&:=&\sum_{m=0}^{M-1}b_{k}^{*}\left[ \xi^{[Y]}_{m\Delta}, \mu^{[k]}_{m\Delta} \right](\xi^{[Y]}_{(m+1)\Delta}-\xi^{[Y]}_{m\Delta})-\frac{\Delta}{2}\sum_{m=0}^{M-1}b^{*2}_{k}\left[\xi^{[Y]}_{m\Delta}, \mu^{[k]}_{m\Delta}\right]\\
&=&\sum_{m=0}^{M-1}\int_{\R} b_{k}^{*}( \xi^{[Y]}_{m\Delta}, y)\mu_{m\Delta}^{[k]}(\dd y)\big(\xi^{[Y]}_{(m+1)\Delta}-\xi^{[Y]}_{m\Delta}\big)-\frac{\Delta}{2}\sum_{m=0}^{M-1}\left(\int_{\R} b_{k}^{*}(\xi^{[Y]}_{m\Delta}, y)\mu_{m\Delta}^{[k]}(\dd y)\right)^2.\nonumber
\end{eqnarray}
Then, we also introduce 
\begin{equation}\label{eq:Fhat}
\wt{F}^*_k(\xi^{[Y]}):=\frac{1}{N}\sum_{n=1}^N\sum_{m=0}^{M-1} b_k^{*}( \xi_{m\Delta}^{[Y]}, \wt{X}_{m\Delta}^{[k],n})(\xi^{[Y]}_{(m+1)\Delta}-\xi^{[Y]}_{m\Delta})-\frac{\Delta}{2}\sum_{m=0}^{M-1}  \left(\frac{1}{N}\sum_{n=1}^N b^{*}_{k}(\xi^{[Y]}_{m\Delta}, \wt{X}^{[k],n}_{m\Delta})\right)^2.
\end{equation}

Finally, to approximate $\wt{F}_k^*$, we estimate $b^*_k$, and obtain {$\w{F}_k$} given in Equation \eqref{eq:Fhathat}.
We decompose the risk as follows,
\begin{eqnarray*}
\left|{\w{F}}_k(\xi^{[Y]})-{F}^*_k(\xi^{[Y]}) \right| &\leq&  \left|{\w{F}}_k(\xi^{[Y]})-\wt{F}^*_k(\xi^{[Y]}) \right|+
\left|\wt{F}^*_k(\xi^{[Y]})- \bar{F}_k^*(\xi^{[Y]}) \right| +\left|\bar{F}_k^*(\xi^{[Y]})- F_k^*(\xi^{[Y]}) \right| \\
&=:& T_1+T_2+T_3.
\end{eqnarray*}
The last term $T_3$ represents the cost of the discretization in time, and $T_2$ represents the cost of the approximation of the true marginal distribution  $\mu^{[k]}_{m\Delta}$ by the empirical one, and $T_1$ is the estimation cost of the function $b_k^*$.

\paragraph{Study of $T_1$.}
 \begin{eqnarray*}
T_1&=&\left|{\w{F}}_k(\xi^{[Y]})-\wt{F}^*_k(\xi^{[Y]}) \right| \leq  \left| \frac{1}{N}\sum_{n=1}^N \sum_{m=0}^{M-1} \left(\w{b}_k(\xi^{[Y]}_{m\Delta},\wt{X}^{[k],n}_{m\Delta})- b_k^{*}( \xi^{[Y]}_{m\Delta}, \wt{X}^{[k],n}_{m\Delta})\right)(\xi^{[Y]}_{(m+1)\Delta}-\xi^{[Y]}_{m\Delta})
\right|\\
&&+\frac{\Delta}{2}
\left|
\sum_{m=0}^{M-1}  \left(\frac{1}{N}\sum_{n=1}^N \w{b}_{k}(\xi^{[Y]}_{m\Delta}, \wt{X}^{[k],n}_{m\Delta})\right)^2
-\sum_{m=0}^{M-1}  \left(\frac{1}{N}\sum_{n=1}^N b^{*}_{k}(\xi^{[Y]}_{m\Delta}, \wt{X}^{[k],n}_{m\Delta})\right)^2
\right|
\end{eqnarray*}
Let use here that for any function $f$,
$$\sum_{m=0}^{M-1} f(\xi^{[Y]}_{m\Delta})(\xi^{[Y]}_{(m+1)\Delta}-\xi^{[Y]}_{m\Delta})= \int_0^1 f(\xi^{[Y]}_{\eta(s)})\dd \xi^{[Y]}_s, \quad 
\Delta\sum_{m=0}^{M-1} f(\xi^{[Y]}_{m\Delta})= \int_0^1 f(\xi^{[Y]}_{\eta (s)})\dd s
$$
with $\eta (s)=m\Delta$ if $m\Delta \leq s < (m+1)\Delta$. Then, 
\begin{align*}
&T_1\leq  \left| \frac{1}{N}\sum_{n=1}^N \int_0^1 \left(\w{b}_k(\xi^{[Y]}_{\eta (s)},\wt{X}_{\eta (s)}^{[k],n})- b_k^{*}( \xi^{[Y]}_{\eta (s)}, \wt{X}_{\eta (s)}^{[k],n})\right)\dd \xi^{[Y]}_s\right|\\
&\quad+\frac{1}{2}\left| \int_0^1 \left(\frac{1}{N} \sum_{n=1}^N {b^{*}_{k}(\xi^{[Y]}_{\eta (s)}, \wt{X}^{[k],n}_{\eta (s)})}-\w{b}_k(\xi^{[Y]}_{\eta (s)}, \wt{X}^{[k],n}_{\eta (s)}) \right) \left( \frac{1}{N} \sum_{n=1}^N {b^{*}_{k}(\xi^{[Y]}_{\eta (s)}, \wt{X}^{[k],n}_{\eta (s)})}+\w{b}_k(\xi^{[Y]}_{\eta (s)}, \wt{X}^{[k],n}_{\eta (s)})\right)\dd s \right|.
\end{align*}
Furthermore as $\xi$ has a dynamic described in Equation \eqref{eq:mkv}, 
%
\begin{align*}
&T_1\leq  \left| \frac{1}{N}\sum_{n=1}^N \int_0^1 \left(\w{b}_k(\xi^{[Y]}_{\eta (s)},\wt{X}^{[k],n}_{\eta (s)})- b_k^{*}( \xi^{[Y]}_{\eta (s)}, \wt{X}^{[k],n}_{\eta (s)})\right) {b_Y^*[\xi^{[Y]}_s, \mu_s^{[Y]}] \dd s}\right|\\
&\quad +\left| \frac{1}{N}\sum_{n=1}^N \int_0^1 \left(\w{b}_k(\xi^{[Y]}_{\eta (s)},\wt{X}^{[k],n}_{\eta (s)})- b_k^{*}( \xi^{[Y]}_{\eta (s)}, \wt{X}^{[k],n}_{\eta (s)})\right) \dd W_s\right|\\
&\quad +\frac{1}{2}\left| \int_0^1 \left(\frac{1}{N} \sum_{n=1}^N {b^{*}_{k}(\xi^{[Y]}_{\eta (s)}, \wt{X}^{[k],n}_{\eta (s)})}-\w{b}_k(\xi^{[Y]}_{\eta (s)}, \wt{X}^{[k],n}_{\eta (s)}) \right) \left( \frac{1}{N} \sum_{n=1}^N {b^{*}_{k}(\xi^{[Y]}_{\eta (s)}, \wt{X}^{[k],n}_{\eta (s)})}+\w{b}_k(\xi^{[Y]}_{\eta (s)}, \wt{X}^{[k],n}_{\eta (s)})\right)\dd s \right|.
\end{align*}
Then, 
\begin{eqnarray}\label{eq:eqT1Utile1}
\E[T_1]&\leq & \int_0^1 \E\left[ \left( \frac{1}{N}\sum_{n=1}^N  \w{b}_k(\xi^{[Y]}_{\eta (s)},\wt{X}_{\eta (s)}^{[k],n})- b_k^{*}( \xi^{[Y]}_{\eta (s)}, \wt{X}_{\eta (s)}^{[k],n})\right)^2\right]^{1/2} \E\left[ b_Y^{*2}[\xi^{[Y]}_s,\mu^{[Y]}_s]\right]^{1/2} \dd s \nonumber\\ 
&&+\E\left[\int_0^1 \left(\frac{1}{N}\sum_{n=1}^N \w{b}_k(\xi^{[Y]}_{\eta (s)},\wt{X}^{[k],n}_{\eta (s)})- b_k^{*}( \xi^{[Y]}_{\eta (s)}, \wt{X}^{[k],n}_{\eta (s)})\right)^2 \dd s\right]^{1/2} \nonumber\\ 
&&+\frac{1}{2}\E\left[\left| \int_0^1 \left(\frac{1}{N} \sum_{n=1}^N {b^{*}_{k}(\xi^{[Y]}_{\eta (s)}, \wt{X}^{[k],n}_{\eta (s)})}-\w{b}_k(\xi^{[Y]}_{\eta (s)}, \wt{X}^{[k],n}_{\eta (s)}) \right) \right.\right.\nonumber\\
&&\hspace{2cm}\left.\left.\times\left( \frac{1}{N} \sum_{n=1}^N {b^{*}_{k}(\xi^{[Y]}_{\eta (s)}, \wt{X}^{[k],n}_{\eta (s)})}+\w{b}_k(\xi^{[Y]}_{\eta (s)}, \wt{X}^{[k],n}_{\eta (s)})\right)\right| \dd s\right] \nonumber \\
&=& T_a+ T_b+\frac{1}{2}T_c.
\end{eqnarray}
Since $b^*_k$ is bounded for $k\in \cY$ and $\w{b}_k$ is truncated,
we then deduce that
$$T_a \leq {\mathfrak{C}}
\int_0^1 \E\left[ \left( \frac{1}{N}\sum_{n=1}^N  \w{b}_k(\xi^{[Y]}_{\eta (s)},\wt{X}^{[k],n}_{\eta (s)})- b_k^{*}( \xi^{[Y]}_{\eta(s)}, \wt{X}^{[k],n}_{\eta (s)})\right)^2\right]^{1/2}
\dd s,
$$
and 
$$T_b \leq \E\left[\int_0^1 \left(\frac{1}{N}\sum_{n=1}^N \w{b}_k(\xi^{[Y]}_{\eta (s)},\wt{X}^{[k],n}_{\eta (s)})- b_k^{*}( \xi^{[Y]}_{\eta (s)}, \wt{X}^{[k],n}_{\eta (s)})\right)^2 \dd s\right]^{1/2}.$$
Thus from the above inequality we deduce that 
\begin{equation}\label{eq:eqT1Utile2}
\E[T_a]\leq  {\mathfrak{C}}\, \E[\|b_k^*-\w{b}_k\|_{M,k}], \;\; {\rm and} \;\; \E[T_b]\leq  \E[\|b_k^*-\w{b}_k\|_{M,k}]. %
\end{equation}
Furthermore, since for $N$ large enough, and each $k \in \mathcal{Y}$, we have from Assumption~\ref{ass:bsup_et_lip_in_x_and_y} that $\left\|b_k^*\right\|_{\infty} \leq \sqrt{\log(N)}$, and $\left\|\w{b}_k\right\|_{\infty} \leq \sqrt{\log(N)}$ by definition in Equation~\eqref{eq:bhattronc}, we deduce from Equation~\eqref{eq:eqT1Utile1} that 
\begin{eqnarray*}
T_c 
&\leq & \log^{1/2}(N)\int_0^1 \E\left[ \left(\frac{1}{N} \sum_{n=1}^N {b^{*}_{k}(\xi^{[Y]}_{\eta (s)}, \wt{X}^{[k],n}_{\eta (s)})}-\w{b}_k(\xi^{[Y]}_{\eta (s)}, \wt{X}^{[k],n}_{\eta (s)}) \right)^2\right]^{1/2} \mathrm{d}s.
\end{eqnarray*}
Thus from the above inequality we deduce that 
\begin{equation}\label{eq:eqT1Utile3}
 \E[T_c]\leq {2}\log^{1/2}(N)  \E[\|b_k^*-\w{b}_k\|_{M,k}].
\end{equation}
Finally, from Equation~\eqref{eq:eqT1Utile1}, ~\eqref{eq:eqT1Utile2}, and~\eqref{eq:eqT1Utile3}, we get
\begin{equation*}
\mathbb{E}\left[T_1\right] \leq  {\mathfrak{C}}  \log^{1/2}(N) \E[\|b_k^*-\w{b}_k\|_{M,k}].  
\end{equation*}

\paragraph{Study of $T_2=\big|{\wt{F}}_k^{*}(\xi^{[Y]})-\bar{F}_k^*(\xi^{[Y]}) \big|$.}

The term $T_2$ corresponds to the error arising from approximating the measure arguments 
$\mu_{m\Delta}^{[k]}$, $k \in \mathcal{Y}$, by the empirical measures
\[
\widetilde{\mu}_{m\Delta}^{[k],N}
:=
\frac{1}{N}\sum_{n=1}^N \delta_{\widetilde{X}_{m\Delta}^{[k],n}},
\qquad k \in \mathcal{Y},
\]
where $\widetilde{X}_{m\Delta}^{[k],n}$, $k \in \mathcal{Y}$, $1 \leq n \leq N$, are defined by the particle system~\eqref{eq:sys}.
Recall that ${\wt{F}}_k^{ *}(\xi^{[Y]})$ is given in Equation~\eqref{eq:Fhat}, and we can rewrite it as follows:
\begin{equation}\label{eq:Fhat-rewrite}
\wt{F}^*_k(\xi^{[Y]}):=\sum_{m=0}^{M-1} b_k^{*} \left[\xi^{[Y]}_{m\Delta}, \widetilde{\mu}_{m\Delta}^{[k],N}\right](\xi^{[Y]}_{(m+1)\Delta}-\xi^{[Y]}_{m\Delta})-\frac{\Delta}{2}\sum_{m=0}^{M-1}  \left( b^{*}_{k}\left[\xi^{[Y]}_{m\Delta}, \widetilde{\mu}_{m\Delta}^{[k],N}\right]\right)^2.
\end{equation}
Hence, 
\begin{align}
&\left|\wt{F}_k^*(\xi^{[Y]})-\bar{F}_k^*(\xi^{[Y]})\right|\nonumber\\
&\quad \leq \left|\sum_{m=0}^{M-1}  \left (b_{k}^{*}\left[ \xi^{[Y]}_{m\Delta}, \widetilde{\mu}_{m\Delta}^{[k],N}\right]- b_{k}^{*}\left[ \xi^{[Y]}_{m\Delta}, \mu_{m\Delta}^{[k]}\right] \right)\big(\xi^{[Y]}_{(m+1)\Delta}-\xi^{[Y]}_{m\Delta}\big)\right|\nonumber\\
& \qquad +\frac{\Delta}{2}\sum_{m=0}^{M-1} \left|b_{k}^{*2}\left[ \xi^{[Y]}_{m\Delta}, \widetilde{\mu}_{m\Delta}^{[k],N}\right]-b_{k}^{*2}\left[\xi^{[Y]}_{m\Delta}, \mu_{m\Delta}^{[k]}\right]\right|\nonumber\\
&\quad \leq \left|\int_{0}^{1} \left (b_{k}^{*}\left[ \xi^{[Y]}_{\eta(s)}, \widetilde{\mu}_{\eta(s)}^{[k],N}\right]- b_{k}^{*}\left[ \xi^{[Y]}_{\eta(s)}, \mu_{\eta(s)}^{[k]}\right] \right)\dd \xi^{[Y]}_{s}\right|\label{eq:T2improve}\\
&\qquad+\frac{\Delta}{2}\sum_{m=0}^{M-1} \left|b_{k}^{*}\left[ \xi^{[Y]}_{m\Delta}, \widetilde{\mu}_{m\Delta}^{[k],N}\right]-b_{k}^{*}\left[ \xi^{[Y]}_{m\Delta}, \mu_{m\Delta}^{[k]}\right]\right|\left|b_{k}^{*}\left[ \xi^{[Y]}_{m\Delta}, \widetilde{\mu}_{m\Delta}^{[k],N}\right]+b_{k}^{*}\left[ \xi^{[Y]}_{m\Delta}, \mu_{m\Delta}^{[k]}\right]\right|,\nonumber
\end{align}
where $\eta(s)\coloneqq m\Delta$ if $s\in[m\Delta, (m+1)\Delta)$ for every $m\in\{0,..., M-1\}$. 
The expectation of the first term in \eqref{eq:T2improve} admits the following upper bound.
\begin{align}
&\E \left[\left|\int_{0}^{1} \left (b_{k}^{*}\left[ \xi^{[Y]}_{\eta(s)},\widetilde{\mu}_{\eta(s)}^{[k],N}\right]- b_{k}^{*}\left[ \xi^{[Y]}_{\eta(s)}, \mu_{\eta(s)}^{[k]}\right] \right)\dd \xi_{s}\right|\right]\nonumber\\
&\quad \leq \E \left[\left|\int_{0}^{1} \left (b_{k}^{*}\left[ \xi^{[Y]}_{\eta(s)}, \widetilde{\mu}_{\eta(s)}^{[k],N}\right]- b_{k}^{*}\left[ \xi^{[Y]}_{\eta(s)}, \mu_{\eta(s)}^{[k]}\right] \right)b\left[\xi^{[Y]}_s, \mu_s^{[Y]}\right]\dd s \right|\right]\nonumber\\
&\qquad + \E \left[\left|\int_{0}^{1} \left (b_{k}^{*}\left[ \xi^{[Y]}_{\eta(s)}, \widetilde{\mu}_{\eta(s)}^{[k],N}\right]- b_{k}^{*}\left[ \xi^{[Y]}_{\eta(s)}, \mu_{\eta(s)}^{[k]}\right] \right)\dd W_s\right|\right]\nonumber\\
&\quad \leq b^*_{\max} \int_{0}^{1} \E\left[\left| b_{k}^{*}\left[ \xi^{[Y]}_{\eta(s)}, \widetilde{\mu}_{\eta(s)}^{[k],N}\right]- b_{k}^{*}\left[ \xi^{[Y]}_{\eta(s)}, \mu_{\eta(s)}^{[k]}\right] \right|\right]\dd s \nonumber\\
&\qquad + \left\Vert\int_{0}^{1} \left (b_{k}^{*}\left[ \xi^{[Y]}_{\eta(s)}, \widetilde{\mu}_{\eta(s)}^{[k],N}\right]- b_{k}^{*}\left[ \xi^{[Y]}_{\eta(s)}, \mu_{\eta(s)}^{[k]}\right] \right)\dd W_s\right\Vert_2\nonumber\\
&\quad \leq b^*_{\max}L\int_{0}^{1} \E\left[\mathcal{W}_1\left(\widetilde{\mu}_{\eta(s)}^{[k],N}, \mu_{\eta(s)}^{[k]}\right) \right]\dd s \nonumber\\
& \qquad + C\left\{\E \left[ \int_{0}^{1}\left (b_{k}^{*}\left[ \xi^{[Y]}_{\eta(s)}, \widetilde{\mu}_{\eta(s)}^{[k],N}\right]- b_{k}^{*}\left[ \xi^{[Y]}_{\eta(s)}, \mu_{\eta(s)}^{[k]}\right] \right)^2\dd s\right]\right\}^{\frac{1}{2}},\nonumber
\end{align}
where the second term in the third inequality above follows from the Burkholder-Davis-Gundy inequality (see, e.g., Section 7.8.1  and (7.53) in \cite{pages-numerical-proba}).

For the term $\E \left[ \int_{0}^{1}\left (b_{k}^{*}\left[ \xi^{[Y]}_{\eta(s)}, \widetilde{\mu}_{\eta(s)}^{[k],N}\right]- b_{k}^{*}\left[ \xi^{[Y]}_{\eta(s)}, \mu_{\eta(s)}^{[k]}\right] \right)^2\dd s\right]$ in the above inequality, we have 
\begin{align}
&\E \left[ \int_{0}^{1}\left (b_{k}^{*}\left[ \xi^{[Y]}_{\eta(s)}, \widetilde{\mu}_{\eta(s)}^{[k],N}\right]- b_{k}^{*}\left[ \xi^{[Y]}_{\eta(s)}, \mu_{\eta(s)}^{[k]}\right] \right)^2\dd s\right]=\int_{0}^{1}\E \left[ \left (b_{k}^{*}\left[ \xi^{[Y]}_{\eta(s)}, \widetilde{\mu}_{\eta(s)}^{[k],N}\right]- b_{k}^{*}\left[ \xi^{[Y]}_{\eta(s)}, \mu_{\eta(s)}^{[k]}\right] \right)^2\right]\dd s\nonumber\\
&= \int_{0}^{1}\E \left[ \left (\frac{1}{N}\sum_{n=1}^Nb_{k}^{*}\left( \xi^{[Y]}_{\eta(s)}, \widetilde{X}_{\eta(s)}^{[k],n}\right)-\int_{\R} b_{k}^{*}\left( \xi^{[Y]}_{\eta(s)}, y\right) \mu_{\eta(s)}^{[k]}(\dd y)\right)^2\right]\dd s\nonumber\\
&=\int_{0}^{1}\E \left[ \E \left[ \left (\frac{1}{N}\sum_{n=1}^Nb_{k}^{*}\left( \xi^{[Y]}_{\eta(s)}, \widetilde{X}_{\eta(s)}^{[k],n}\right)-\int_{\R} b_{k}^{*}\left( \xi^{[Y]}_{\eta(s)}, y\right) \mu_{\eta(s)}^{[k]}(\dd y)\right)^2 \;\Bigg|\;  \xi^{[Y]}_{\eta(s)}\right]\right]\dd s\nonumber\\
&\leq \mathfrak{C}N^{-1},
\end{align}
where the last inequality comes from Lemma \ref{lem:application-TV-distance-and-Bernstein}-$(c)$. Combining with Proposition \ref{prop:chaos}-(a)for the term $\E\left[\mathcal{W}_1\left(\widetilde{\mu}_{\eta(s)}^{[k],N}, \mu_{\eta(s)}^{[k]}\right) \right]$, we obtain, 
\begin{align}
&\E \left[\left|\int_{0}^{1} \left (b_{k}^{*}\left[ \xi^{[Y]}_{\eta(s)},\widetilde{\mu}_{\eta(s)}^{[k],N}\right]- b_{k}^{*}\left[ \xi^{[Y]}_{\eta(s)}, \mu_{\eta(s)}^{[k]}\right] \right)\dd \xi_{s}\right|\right]\leq \mathfrak{C} N^{-\frac{1}{2}}.
\end{align}
Moreover, by applying Proposition~\ref{prop:chaos}, we obtain the following upper bound for the expectation of the second term in~\eqref{eq:T2improve},
\begin{align}
&\E\left[ \frac{\Delta}{2}\sum_{m=0}^{M-1} \left|b_{k}^{*}\left[ \xi^{[Y]}_{m\Delta},\widetilde{\mu}_{m\Delta}^{[k],N}\right]-b_{k}^{*}\left[ \xi^{[Y]}_{m\Delta}, \mu_{m\Delta}^{[k]}\right]\right|\left|b_{k}^{*}\left[ \xi^{[Y]}_{m\Delta}, \widetilde{\mu}_{m\Delta}^{[k],N}\right]+b_{k}^{*}\left[ \xi^{[Y]}_{m\Delta}, \mu_{m\Delta}^{[k]}\right]\right|\right]\nonumber\\
&\leq\frac{\Delta}{2}\sum_{m=0}^{M-1}  L\E\left[\mathcal{W}_1\Big(\widetilde{\mu}_{m\Delta}^{[k],N}, \mu_{m\Delta}^{[k]} \Big)\right]\cdot 2 b^*_{\max}\leq L b^*_{\max}\mathfrak{C} N^{-\frac{1}{2}}.
\end{align}
Finally, 
\begin{align}
\E[T_2]=\E \left[\left|\wt{F}_k^*(\xi)-\bar{F}_k^*(\xi)\right| \right]\leq \mathfrak{C}N^{-\frac{1}{2}}.
\end{align}


\paragraph{Study of $T_3=\big|\bar{F}_k^*(\xi^{[Y]})-F_k^*(\xi^{[Y]})\big|  $.}

First, we have
\begin{eqnarray}\label{maj1}
\left|F_k^{*}(\xi^{[Y]})-\bar{F}_k^*(\xi^{[Y]})\right|
&\leq &\left| \sum_{m=0}^{M-1}\int_{m\Delta}^{(m+1)\Delta} b_{k}^{*}\left[\xi^{[Y]}_s, \mu_{s}^{[k]}\right]\dd \xi^{[Y]}_s- \sum_{m=0}^{M-1} b_{k}^{*}\left[\xi^{[Y]}_{m\Delta}, \mu_{m\Delta}^{[k]}\right]\big(\xi^{[Y]}_{(m+1)\Delta}-\xi^{[Y]}_{m\Delta}\big)\right|\nonumber\\
&& + \frac{1}{2}\left| \int_{0}^{1}(b_{k}^{*})^2\left[\xi^{[Y]}_s,\mu_{s}^{[k]}\right]\dd s
- \sum_{m=0}^{M-1}\Delta (b_{k}^{*})^2\left[\xi^{[Y]}_{m\Delta}, \mu_{m\Delta}^{[k]}\right]\right|.
\end{eqnarray}
Recall that for every $s\in[m\Delta, (m+1)\Delta)$, $0\leq m\leq M-1$, we denote $\eta(s)\coloneqq m\Delta$. 
The first term in~\eqref{maj1} can be upper-bounded as follows:
\begin{align*}
&\E\left[
\left|
\sum_{m=0}^{M-1}\int_{m\Delta}^{(m+1)\Delta}
b_k^*\left[\xi_s^{[Y]},\mu_s^{[k]}\right]\dd\xi_s^{[Y]}
-
\sum_{m=0}^{M-1}
b_k^*\left[\xi_{m\Delta}^{[Y]},\mu_{m\Delta}^{[k]}\right]
\left(\xi_{(m+1)\Delta}^{[Y]}-\xi_{m\Delta}^{[Y]}\right)
\right|
\right]
\\
&=
\E\left[
\left|
\int_0^1
\left(
b_k^*\left[\xi_s^{[Y]},\mu_s^{[k]}\right]
-
b_k^*\left[\xi_{\eta(s)}^{[Y]},\mu_{\eta(s)}^{[k]}\right]
\right)
\dd\xi_s^{[Y]}
\right|
\right].
\end{align*}
Since
\[
\dd\xi_s^{[Y]}
=
b_Y^*\left[\xi_s^{[Y]},\mu_s^{[Y]}\right]\dd s+\dd W_s,
\]
we obtain
\begin{align*}
&\E\left[
\left|
\int_0^1
\left(
b_k^*\left[\xi_s^{[Y]},\mu_s^{[k]}\right]
-
b_k^*\left[\xi_{\eta(s)}^{[Y]},\mu_{\eta(s)}^{[k]}\right]
\right)
\dd\xi_s^{[Y]}
\right|
\right]
\\
&\leq
b_{\max}^*
\int_0^1
\E\left[
\left|
b_k^*\left[\xi_s^{[Y]},\mu_s^{[k]}\right]
-
b_k^*\left[\xi_{\eta(s)}^{[Y]},\mu_{\eta(s)}^{[k]}\right]
\right|
\right]\dd s +
\E\left[
\left|
\int_0^1
\left(
b_k^*\left[\xi_s^{[Y]},\mu_s^{[k]}\right]
-
b_k^*\left[\xi_{\eta(s)}^{[Y]},\mu_{\eta(s)}^{[k]}\right]
\right)
\dd W_s
\right|
\right].
\end{align*}
By the Lipschitz continuity of $b_k^*$ and It\^o's isometry,
\begin{align*}
&\E\left[
\left|
\int_0^1
\left(
b_k^*\left[\xi_s^{[Y]},\mu_s^{[k]}\right]
-
b_k^*\left[\xi_{\eta(s)}^{[Y]},\mu_{\eta(s)}^{[k]}\right]
\right)
\dd\xi_s^{[Y]}
\right|
\right]
\\
&\leq
Lb_{\max}^*
\int_0^1
\left(
\E\left[
\left|\xi_s^{[Y]}-\xi_{\eta(s)}^{[Y]}\right|
\right]
+
\mathcal W_1\left(
\mu_s^{[k]},\mu_{\eta(s)}^{[k]}
\right)
\right)\dd s
\\
&\quad+
L\left\{
\int_0^1
\E\left[
\left(
\left|\xi_s^{[Y]}-\xi_{\eta(s)}^{[Y]}\right|
+
\mathcal W_1\left(
\mu_s^{[k]},\mu_{\eta(s)}^{[k]}
\right)
\right)^2
\right]\dd s
\right\}^{1/2}.
\end{align*}

Similarly, the second term in~\eqref{maj1} can be upper-bounded as follows, 
\begin{align}
&\left| \int_{0}^{1}(b_{k}^{*})^2\left[\xi^{[Y]}_s, \mu_{s}^{[k]}\right]\dd s
- \sum_{m=0}^{M-1}\Delta (b_{k}^{*})^2\left[\xi^{[Y]}_{m\Delta}, \mu_{m\Delta}^{[k]}\right]\right|\nonumber\\
&\leq \sum_{m=0}^{M-1}\int_{m\Delta}^{(m+1)\Delta}\left|(b_{k}^{*})^2\left[\xi^{[Y]}_s, \mu_{s}^{[k]}\right]-(b_{k}^{*})^2\left[\xi^{[Y]}_{m\Delta}, \mu_{m\Delta}^{[k]}\right]\right|\dd s\nonumber\\
&\leq\sum_{m=0}^{M-1}\int_{m\Delta}^{(m+1)\Delta}\left|b_{k}^{*}\left[\xi^{[Y]}_s, \mu_{s}^{[k]}\right]-b_{k}^{*}\left[\xi^{[Y]}_{m\Delta}, \mu_{m\Delta}^{[k]}\right]\right|\cdot \left|b_{k}^{*}\left[\xi^{[Y]}_s, \mu_{s}^{[k]}\right]+b_{k}^{*}\left[\xi^{[Y]}_{m\Delta}, \mu_{m\Delta}^{[k]}\right]\right|\dd s\nonumber\\
&\leq \sum_{m=0}^{M-1}\int_{m\Delta}^{(m+1)\Delta}L \left[\big|\xi^{[Y]}_s-\xi^{[Y]}_{m\Delta}\big|+\mathcal{W}_1( \mu_{s}^{[k]}, \mu_{m\Delta}^{[k]})\right]
\cdot \left|b_{k}^{*}\left[\xi^{[Y]}_s, \mu_{s}^{[k]}\right]+b_{k}^{*}\left[\xi^{[Y]}_{m\Delta}, \mu_{m\Delta}^{[k]}\right]\right|\dd s\nonumber\\
&\leq 2b_{\max}^* \sum_{m=0}^{M-1}\int_{m\Delta}^{(m+1)\Delta}L \left[\big|\xi^{[Y]}_s-\xi^{[Y]}_{m\Delta}\big|+\mathcal{W}_1( \mu_{s}^{[k]}, \mu_{m\Delta}^{[k]})\right]\dd s.\nonumber
\end{align}
Proposition \ref{prop:pre-result1} implies that 
\begin{align}
\sup_{s\in[0,1]}\mathcal{W}_1( \mu_{s}^{[k]}, \mu_{\eta(s)}^{[k]})\leq \sup_{s\in[0,1]}\big\Vert \xi_s^{[k]}-\xi_{\eta(s)}^{[k]}\big\Vert_2\leq \mathfrak{C}\sqrt{\Delta} \text{ and }\sup_{s\in[0,1]}\Vert \xi^{[k]}_s\Vert_2\leq \mathfrak{C}. \nonumber
\end{align}
Consequently, we have
$$
\E[T_3]=\E\left[\left|F_k^{*}(\xi)-\bar{F}_k^*(\xi)\right|\right]\leq \mathfrak{C} \sqrt{\Delta}.
$$

\subsection{Proofs of Section \ref{sec:inference}: study of a drift estimator }\label{proof:secdrift}
In this section we work with observations on class $k$ only. 
Let recall that $\left\{\left(X^{[k],n}_{m\Delta}\right)_{m = 0,\ldots,M-1}\right\}$ and $\left\{\left(\wt{X}^{[k],n}_{m\Delta}\right)_{m=0, \ldots,M-1}\right\}$ are two independent samples from the same class $k$ drawn from Equation~\eqref{eq:sys}.
We denote $\mu_{N}$ and $\wt{\mu}_N$ the two associated empirical measures. 
For a function $h$, we then have that
\begin{equation*}
\frac{1}{N} \sum_{n'=1}^N h(X_{m\Delta}^{[k],n}, {\wt{X}_{m\Delta}^{[k],n^{'}}}) = h[X^{[k],n}_{m\Delta}, \wt{\mu}^{[k],N}_{m\Delta}] \;\; {\rm and} \;\; \frac{1}{N} \sum_{n'=1}^N h(X^{[k],n}_{m\Delta}, {{X}_{m\Delta}^{[k], n^{'}}}) = h[X^{[k],n}_{m\Delta}, {\mu}^{[k],N}_{m\Delta}].
\end{equation*}

Let us introduce the following empirical norm, naturaly associated with our empirical contrast,
\begin{equation}\label{eq:def-norm-NMK}
\left\|h\right\|^2_{N,M,k} := \dfrac{1}{NM} \sum_{m=0}^{M-1}\sum_{n = 1}^N  \left(\frac{1}{N} \sum_{n' = 1}^N   h(X^{[k],n}_{m\Delta}, \wtX_{m\Delta}^{[k],n'})\right)^2.
\end{equation}
We remind the reader that $\w{b}_k$ denote the final truncated estimator of $b^*_k$ given in Equation~\eqref{eq:estimatorbhatfinal}, whereas, $\wt{b}_k$ denote the minimizer of the least-squares contrast before truncation given in Equation~\eqref{eq:btilde}.
Besides, for each $k \in \mathcal{Y}$, we also recall that the restriction of $b_k^*$ to $[-A_N,A_N]^2$ denoted by  
$\bar{b}^*_k$ is defined as
\begin{equation}\label{eq:bbarstar}
 \bar{b}^*_k(x,y) = b_k^*(x,y) \one_{\left\{(x,y) \in [-A_N,A_N]^2\right\}}.   
 \end{equation}
Now, we established the following results on $\bar{b}^*_k$.

This proof section is organized as follows. First, we prove the order of the approximation error, given in Proposition~\ref{prop:splineapprox}. Then, we first control the order of the error $\E[\|\bar{b}_k^*-b_k^* \|^2_{M,k,k}$ in Proposition \ref{prop:bkBar}. Then, in Proposition \ref{prop:estimbresult} we deal with the empirical error $\E[\|\wt{b}_k^*-b_k^* \|^2_{N,M,k}$ first. Before finally controlling $\E[\|\bar{b}_k^*-b_k^* \|^2_{M,k,k}$ which is the object of Theorem~\ref{theo:riskbhatMkk}.

\subsubsection{Proof of Proposition~\ref{prop:splineapprox}}\label{proof:propsplineapprox}

Following a similar path to the proof of Lemma C.1 in \cite{denis2021ridge}, we construct the approximation $\mathfrak{b}_k$ as
$$
    \mathfrak{b}_k(x,y) = \sum_{j_1=-R}^{D-1} \sum_{j_2=-R}^{D-1} b_k^*(u_{j_1}, u_{j_2}) B_{j_1,j_2}(x, y).
$$
Let $(x,y) \in [-A,A)^2$. By definition of the knot sequence $\mathbf{u}$, there exist indices $i_1, i_2 \in \{0, \dots, D-1\}$ such that $x \in [u_{i_1}, u_{i_1+1})$ and $y \in [u_{i_2}, u_{i_2+1})$.
Recall that the univariate B-spline $B_{j}$ has a local support such that $B_j(z) = 0$ for $z \notin [u_j, u_{j+R+1})$. Therefore, $B_{j_1}(x) = 0$ for all $j_1 \notin \{i_1-R, \dots, i_1\}$ and $B_{j_2}(y) = 0$ for all $j_2 \notin \{i_2-R, \dots, i_2\}$. The double sum then reduces to these local indices:
\begin{equation*}
    \mathfrak{b}_k(x,y) = \sum_{j_1=i_1-R}^{i_1} \sum_{j_2=i_2-R}^{i_2} b_k^*(u_{j_1}, u_{j_2}) B_{j_1,j_2}(x,y).
\end{equation*}
Since $\sum_{j_1,j_2} B_{j_1,j_2}(x,y) = 1$ for all $(x,y)$ we can write:
\begin{align*}
    \left| \mathfrak{b}_k(x,y) - b^*_k(x,y) \right| &= \left| \sum_{j_1=i_1-R}^{i_1} \sum_{j_2=i_2-R}^{i_2} \left( b_k^*(u_{j_1}, u_{j_2}) - b_k^*(x,y) \right) B_{j_1,j_2}(x,y) \right| \\
    &\leq \max_{\substack{i_1-R \leq j_1 \leq i_1 \\ i_2-R \leq j_2 \leq i_2}} \left| b_k^*(u_{j_1}, u_{j_2}) - b_k^*(x,y) \right| \sum_{j_1,j_2} B_{j_1,j_2}(x,y) \\
    &= \max_{\substack{i_1-R \leq j_1 \leq i_1 \\ i_2-R \leq j_2 \leq i_2}} \left| b_k^*(u_{j_1}, u_{j_2}) - b_k^*(x,y) \right|.
\end{align*}
By Assumption \ref{ass:bsup_et_lip_in_x_and_y}, the function $b_k^*$ is $L$-Lipschitz. Thus, for any $(j_1, j_2)$ within the local support:
$$
    \left| b_k^*(u_{j_1}, u_{j_2}) - b_k^*(x,y) \right| \leq L \left( |u_{j_1} - x| + |u_{j_2} - y| \right).
$$
Since the distance between two consecutive knots is $2A/D$, we have:
$$
    |u_{j_1} - x| \leq u_{i_1+1} - u_{i_1-R} = (R+1)\frac{2A}{D},
$$
and similarly, $|u_{j_2} - y| \leq (R+1)\frac{2A}{D}$. Thus,
$$
    \left| \mathfrak{b}_k(x,y) - b^*_k(x,y) \right| \leq L \left( (R+1)\frac{2A}{D} + (R+1)\frac{2A}{D} \right) = \frac{4A(R+1)}{D}L,
$$
which establishes the bound \eqref{eq:biais}.\\
Finally, we must verify that $\mathfrak{b}_k \in \mathcal{S}_{D,R}$. This requires the coefficients vector $\mathbf{a} = (a_{(j_1,j_2)})$ with $a_{(j_1,j_2)} = b_k^*(u_{j_1}, u_{j_2})$ to satisfy the $\ell^2$-norm constraint $\|\mathbf{a}\|_2^2 \leq (D+R)^2 A^2 \log(N)$ defined in \eqref{eq:SDR}. 
Since the function $b_k^*$ is $L$  Lipschitz, we have that, for all $(x,y) \in (-A,A)^2$,
\begin{equation*}
\left|b_k^*(x,y) \right| \leq \left|b_k^*(x,y)-b_k^*(0,0) \right|+\left|b_k^*(0,0)\right| \leq L(|x|+|y|) +  \left|b_k^*(0,0)\right| \leq A\sqrt{\log(N)},  
\end{equation*}
for $N$ large enough.
Therefore,
\begin{equation*}
    \|\mathbf{a}\|_2^2 = \sum_{j_1,j_2} |b_k^*(u_{j_1}, u_{j_2})|^2 \leq (D+R)^2 A^2 \log(N),
\end{equation*}
ensuring that $\mathfrak{b}_k \in \mathcal{S}_{D,R}$. This concludes the proof.


\begin{proposition}\label{prop:bkBar}
For each $k \in \mathcal{Y}$. For $A_N = \log(N)$, the following holds for $\bar{b}_k^*$ given in Equation~\eqref{eq:bbarstar},
\begin{equation*}
\E\left[\left\|\bar{b}^*_k-b_k^*\right\|^2_{M,k,k} \right] \leq \frac{\mathfrak{C}}{N}.
\end{equation*}
\end{proposition}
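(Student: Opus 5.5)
The plan is to bound the integrand pointwise using the boundedness of $b_k^*$ and then apply the Gaussian-type tail bound of Lemma~\ref{lem:ineq-concentration-Xt}. By definition of $\bar b^*_k$ in \eqref{eq:bbarstar}, we have $(\bar b_k^*-b_k^*)(x,y) = -b_k^*(x,y)\one_{\{(x,y)\notin[-A_N,A_N]^2\}}$. Hence, by Assumption~\ref{ass:bsup_et_lip_in_x_and_y},
\[
\big|(\bar b_k^*-b_k^*)(x,y)\big| \leq b^*_{\max}\big(\one_{\{|x|>A_N\}}+\one_{\{|y|>A_N\}}\big).
\]
Plugging this into the definition \eqref{eq:normMkk} with $k'=k$, and using $(a+b)^2\le 2a^2+2b^2$ together with Jensen's inequality on the empirical average over $n'$, we obtain
\[
\Big(\frac1N\sum_{n'}(\bar b_k^*-b_k^*)(\xi^{[k]}_{m\Delta},\wtX^{[k],n'}_{m\Delta})\Big)^2 \le 2b^{*2}_{\max}\Big(\one_{\{|\xi^{[k]}_{m\Delta}|>A_N\}} + \frac1N\sum_{n'}\one_{\{|\wtX^{[k],n'}_{m\Delta}|>A_N\}}\Big).
\]

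Taking expectations and averaging over $m$ then gives
\[
\E\big[\|\bar b^*_k-b_k^*\|^2_{M,k,k}\big]\le 2b^{*2}_{\max}\sup_{t\in[0,1]}\Big(\P(|\xi^{[k]}_t|>A_N)+\max_{n'}\P(|\wtX^{[k],n'}_t|>A_N)\Big).
\]
By exchangeability of the particle system, each $\wtX^{[k],n'}_t$ has the same law as $X^{[k],1}_t$. Both probabilities are therefore controlled by Lemma~\ref{lem:ineq-concentration-Xt}, parts (1) and (2), which give the bound $2\exp(-\mathfrak{C}A_N^2)$.

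It remains to check that $2\exp(-\mathfrak{C}\log^2(N))\le \mathfrak{C}'/N$. This holds for all $N$ because $\exp(-c\log^2N)=N^{-c\log N}\le N^{-1}$ once $\log N\ge 1/c$; the finitely many smaller $N$ are absorbed into the constant, using the trivial bound $4b^{*2}_{\max}$. This yields the claim. The only mildly delicate point is the domination of the cross term, namely that an indicator on the product set complement splits into the two marginal indicators. I do not expect any real obstacle, since all quantities are bounded and no independence between $\xi$ and $\wtX$ is needed.
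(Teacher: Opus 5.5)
Your proof is correct and follows the paper's route. Boundedness of $b_k^*$ and Jensen's inequality over the empirical average reduce the quantity to $\P(|\xi^{[k]}_t|>A_N)$ and $\P(|\wtX^{[k],1}_t|>A_N)$, which Lemma~\ref{lem:ineq-concentration-Xt} bounds by $2\exp(-\mathfrak{C}A_N^2)\le \mathfrak{C}/N$ for $A_N=\log(N)$. The only differences are cosmetic: you split the indicator of the complement of $[-A_N,A_N]^2$ before taking expectations rather than after, you carry the factor $b^{*2}_{\max}$ where the paper writes $b_{\max}$, and you absorb small $N$ into the constant explicitly.
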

\begin{proof}
Let $k \in \mathcal{Y}$. First, using the definition of the pseudo norm $\left\|\cdot\right\|_{M,k,k}$ and the definition of $\bar{b}_k$, we have 
$$\E[ \| \bar{b}_k^*- b_k^*\|_{M,k,k}^2]= 
\E \left[
\frac{1}{M}\sum_{m=0}^{M-1} \left(
\frac{1}{N}\sum_{n=1}^N b_k^*(\xi_{m\Delta}^{k'}, \tilde{X}^{[k],n}_{m\Delta})\one_{\left\{ |\xi_{m\Delta}^{k'}|>A_N \bigcup|\tilde{X}^{[k],n}_{m\Delta}|>A_N
\right\}}
\right)^2\right].
$$
Now, let us notice that, applying Jensen's inequality to the convex function $x \mapsto x^2 $ taking the expectation under the uniform distribution on the discrete set $\{1, \ldots, n\}$, we have that 
 $$
 \left(
\frac{1}{N}\sum_{n=1}^N b_k^*(\xi_{m\Delta}^{k'}, \tilde{X}^{[k],n}_{m\Delta})\one_{\left\{ |\xi_{m\Delta}^{k'}|>A_N \bigcup|\tilde{X}^{[k],n}_{m\Delta}|>A_N
\right\}}
\right)^2
\leq 
\frac{1}{N}\sum_{n=1}^N \left(b_k^*(\xi_{m\Delta}^{k'}, \tilde{X}^{[k],n}_{m\Delta})\one_{\left\{ |\xi_{m\Delta}^{k'}|>A_N \bigcup|\tilde{X}^{[k],n}_{m\Delta}|>A_N
\right\}}
\right)^2.
$$
Then, under Assumption~\ref{ass:bsup_et_lip_in_x_and_y} 
$b_k^*$ is bounded and the particles $\widetilde{X}^{[k],n}$ have the same law for all $n$, thus, we finally have
\begin{eqnarray*}
\E\left[ \| \bar{b}_k^*- b_k^*\|_{M,k,k}^2\right]
&\leq & 
b_{\max}\E \left[
\frac{1}{M}\sum_{m=0}^{M-1}
\one_{\left\{ |\xi_{m\Delta}^{k'}|>A_N \bigcup|\tilde{X}^{[k],1}_{m\Delta}|>A_N
\right\}}
\right]\\
& \leq & b_{\max} \left(\sup_{t \in [0,1]} \P( | \xi^{k'}_t |> A_N)+ \sup_{t \in [0,1]} \P( | \widetilde{X}^{[k]}_t |> A_N)\right).
\end{eqnarray*}
We are now able to use Lemma \ref{lem:ineq-concentration-Xt}
to obtain that 
$$\E\left[ \| \bar{b}_k^*- b_k^*\|_{M,k,k}^2\right]
\leq  4 b_{\max} \exp(-\mathfrak{C} A_N^2).
$$
Finally, the choice $A_N = \log(N)$ lead, for $N$ large enough, to
$$
\E\left[\left\|\bar{b}^*_k-b_k^*\right\|^2_{M,k,k} \right] \leq \frac{\mathfrak{C}}{N}.
$$
\end{proof}

Now, let us control the error for the empirical norm of the untruncated estimator of the drifts. 
\begin{proposition}\label{prop:estimbresult}
Let $k \in \cY$, the estimation $\wt{b}_k$ of $b_k^*$ before truncation, given in Equation~\eqref{eq:btilde}, satisfies
\begin{equation*}
\E[\|\wt{b}_k-{b}_k^*\|^2_{N,M,k}] \leq C_A \left( \frac{1}{D_N^2} + \sqrt{\frac{D_N^2\log(N)}{N}}\right) +C\left(\Delta+ \frac{1}{N}\right)
\end{equation*}
where $C_A$ depend only on $A$ and $C$ depends on $x_0, L$. 
\end{proposition}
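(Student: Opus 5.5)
The proof follows the standard least-squares argument for a contrast minimizer over a constrained linear space. The main difference is that the regression noise is not a martingale increment with respect to the design, because the design contains the whole interacting system.

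\textbf{Step 1: decompose the increments.} By \eqref{eq:sys}, write
\[
U^{[k],n}_{m\Delta}=b_k^*[X^{[k],n}_{m\Delta},\wt{\mu}^{[k],N}_{m\Delta}]+\varepsilon^{n}_{m}+\rho^{n}_{m}+r^{n}_{m},
\]
where the three remainder terms are:
\begin{itemize}
\item $\varepsilon^n_m=\Delta^{-1}(W^n_{(m+1)\Delta}-W^n_{m\Delta})$, the noise;
\item $\rho^n_m=b_k^*[X^{[k],n}_{m\Delta},\mu^{[k],N}_{m\Delta}]-b_k^*[X^{[k],n}_{m\Delta},\wt{\mu}^{[k],N}_{m\Delta}]$, the cost of swapping the empirical measure of the first system for that of the independent second system;
\item $r^n_m=\Delta^{-1}\int_{m\Delta}^{(m+1)\Delta}\big(b_k^*[X^{[k],n}_s,\mu^{[k],N}_s]-b_k^*[X^{[k],n}_{m\Delta},\mu^{[k],N}_{m\Delta}]\big)\dd s$, the discretization remainder.
\end{itemize}

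\textbf{Step 2: the basic inequality.} Let $\mathfrak{b}_k\in\mathcal{S}_{D_N,R}$ be the element given by Proposition~\ref{prop:splineapprox}. Since $\gamma^{[k]}_{N,M}(\wt b_k)\le\gamma^{[k]}_{N,M}(\mathfrak b_k)$, expanding the squares gives
\[
\|\wt b_k-b_k^*\|^2_{N,M,k}\le \|\mathfrak b_k-b_k^*\|^2_{N,M,k}+\frac{2}{NM}\sum_{m,n}\big(\varepsilon^n_m+\rho^n_m+r^n_m\big)\,(\wt b_k-\mathfrak b_k)[X^{[k],n}_{m\Delta},\wt\mu^{[k],N}_{m\Delta}].
\]
Using $2ab\le \tfrac14a^2+4b^2$ on the $\rho$ and $r$ parts, and the triangle inequality $\|\wt b_k-\mathfrak b_k\|\le\|\wt b_k-b_k^*\|+\|\mathfrak b_k-b_k^*\|$, these parts cost $\mathfrak C(\E[(r^n_m)^2]+\E[(\rho^n_m)^2])$.
\begin{itemize}
\item By Lipschitz continuity, \eqref{eq-lip-wasserstein}, and Proposition~\ref{prop:pre-result1}(c) together with Proposition~\ref{prop:chaos}(a), we get $\E[(r^n_m)^2]\le\mathfrak C\Delta$.
\item Writing $\rho^n_m$ as a difference of two empirical averages of the bounded function $b_k^*(x,\cdot)$ around the common mean $\int b_k^*(x,y)\mu^{[k]}_{m\Delta}(\dd y)$, Lemma~\ref{lem:application-TV-distance-and-Bernstein}(c) (applied conditionally on $X^{[k],n}$ for the $\wt X$ system, and handled through the self-interaction/exchangeability for the $X$ system) gives $\E[(\rho^n_m)^2]\le\mathfrak C/N$.
\end{itemize}
These two bounds produce the term $C(\Delta+1/N)$.

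\textbf{Step 3: the bias term.} Split $\|\mathfrak b_k-b_k^*\|^2_{N,M,k}$ according to whether all points lie in $[-A_N,A_N]^2$. On that set, Proposition~\ref{prop:splineapprox} bounds the contribution by $C_A/D_N^2$. Off that set, the contribution is at most $\mathfrak{C}\log N\cdot\sup_t\P(|X_t|>A_N)$. This uses $\|\mathfrak b_k\|_\infty\lesssim A\sqrt{\log N}$ from the coefficient bound and the fact that B-splines sum to one, together with Lemma~\ref{lem:ineq-concentration-Xt}(2). Since $A_N=\log N$, the tail probability is $O(e^{-\mathfrak C\log^2 N})$, so this part is $O(1/N)$, as in Proposition~\ref{prop:bkBar}.

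\textbf{Step 4: the noise term (main obstacle).} The term to control is
\[
\E\sup_{h}\frac{1}{NM}\sum_{m,n}\varepsilon^n_m\,h[X^{[k],n}_{m\Delta},\wt\mu^{[k],N}_{m\Delta}],
\]
with the supremum over $h=\wt b_k-\mathfrak b_k$ ranging in the ball of coefficients. The difficulty is that the supremum is over a data-dependent direction, so a ratio-type peeling argument seems necessary. I would avoid this and use Cauchy--Schwarz in coefficient space instead. Writing $h=\sum_j a_jB_j$ with $\|\mathbf a\|_2^2\le 4(D+R)^2A^2\log N$, the term equals $\mathbf a^\top\mathbf V$ with
\[
V_j=\frac{1}{NM}\sum_{m,n}\varepsilon^n_m\,\mathbf B_{j,(m,n)}.
\]
Conditionally on the second system $\wt X$, which is independent of the first, each $\mathbf B_{j,(m,n)}$ is $\mathcal F_{m\Delta}$-adapted, so $V_j$ is a discrete martingale sum with $\E[V_j^2]\le (NM)^{-2}\cdot NM\cdot\Delta^{-1}\cdot\frac{1}{M}\sum_m\E\big[\tfrac1N\sum_n B_{j_1}(X^n_{m\Delta})^2\big]$. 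Summing over $j$ and using that the B-splines sum to one at each point (so $\sum_{j}\mathbf B_{j,(m,n)}^2\le 1$), I expect $\E\|\mathbf V\|_2^2\le \mathfrak C/N$. Cauchy--Schwarz then gives a contribution of order $(D+R)A\sqrt{\log N}\cdot N^{-1/2}=C_A\sqrt{D_N^2\log N/N}$, which is exactly the non-squared variance term in the statement.

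The delicate point is the adaptedness used in Step 4. It holds because $\wt X$ is independent of $(W^n)$, and this is precisely why the two-sample design is needed. Combining Steps 2 to 4 yields the claim.
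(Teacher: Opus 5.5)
Your proposal is correct and follows essentially the paper's route: the same decomposition of the increments into drift, noise, discretization remainder and measure-swap term, the basic inequality from the minimization with Young's inequality on the remainders, the spline bias plus tail control on $[-A_N,A_N]^2$, and Cauchy--Schwarz in coefficient space combined with the martingale-orthogonality bound $\E\|\mathbf V\|_2^2\le 1/N$ (the paper's Lemma~\ref{lem:contrast}). Two small corrections. First, for the first-system part of $\rho^n_m$, Lemma~\ref{lem:application-TV-distance-and-Bernstein}(c) does not apply, because the test function $b_k^*(X^{[k],n}_{m\Delta},\cdot)$ depends on that same system; use instead \eqref{eq-lip-wasserstein}, which is uniform in $x$, together with a propagation-of-chaos bound on $\E[\mathcal W_1^2(\mu^{[k],N}_{m\Delta},\mu^{[k]}_{m\Delta})]$ as the paper does (even an $N^{-1/2}$ bound would be absorbed by the $\sqrt{D_N^2\log(N)/N}$ term). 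Second, the adaptedness in Step~4 holds just as well with a single system, since $X^{[k],n'}_{m\Delta}$ is $\mathcal F_{m\Delta}$-measurable, so the two-sample design is not what this step needs; the paper uses it later, in the concentration step of Theorem~\ref{theo:riskbhatMkk}.
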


\begin{proof}
Let us study the following increment of the process
\begin{eqnarray*}
U_{m\Delta}^{[k],n}&:=& \frac{X_{(m+1)\Delta}^{[k],n}-X_{m\Delta}^{[k],n}}{\Delta}\\
    &=&
{b}^*_k[X_{m\Delta}^{[k],n}, \mu^{[k],N}_{m\Delta}] +\frac{W_{(m+1)\Delta}^{[k],n}-W_{m\Delta}^{[k],n}}{\Delta}  
    \\
    &&+
   \frac{1}{\Delta}\int_{m\Delta}^{(m+1)\Delta} \left(\frac{1}{N}\sum_{n'=1}^N {b}^*_k(X_{s}^{[k],n}, X_{s}^{[k],n'})-\frac{1}{N}\sum_{n'=1}^N {b}^*_k(X_{m\Delta}^{[k],n}, X_{m\Delta}^{[k],n'})\right) \dd s\\
   &=& b_k^*[X_{m\Delta}^{[k],n}, \wt{\mu}^{[k],N}_{m\Delta}]
     + \frac{W_{(m+1)\Delta}^{[k],n}-W_{m\Delta}^{[k],n}}{\Delta}  
    \\
    &&+
   \frac{1}{\Delta}\int_{m\Delta}^{(m+1)\Delta} \left(\frac{1}{N}\sum_{n'=1}^N {b}^*_k(X_{s}^{[k],n}, X_{s}^{[k],n'})-\frac{1}{N}\sum_{n'=1}^N {b}^*_k(X_{m\Delta}^{[k],n}, X_{m\Delta}^{[k],n'})\right) \dd s\\
   &&+ \left(b_k^*[X_{m\Delta}^{[k],n}, {\mu}^{[k],N}_{m\Delta}] - b_k^*[X_{m\Delta}^{[k],n}, \wt{\mu}^{[k],N}_{m\Delta}]\right).
\end{eqnarray*}
Hence, 
\begin{equation*}
U_{m\Delta}^{[k],n} =:  b_k^*[X_{m\Delta}^{[k],n}, \wt{\mu}^{[k],N}_{m\Delta}] + \Sigma^{[k]}_{m,n} + R^{[k]}_{m,n} + \Delta_{\mu,m,n},  
\end{equation*}
with 
\begin{eqnarray*}
\Sigma_{m,n}^{[k]}&:=&\frac{W_{(m+1)\Delta}^{[k],n}-W_{m\Delta}^{[k],n}}{\Delta}    \nonumber\\
R_{m,n}^{[k]}&:=&\frac{1}{\Delta}\int_{m\Delta}^{(m+1)\Delta} \left(\frac{1}{N}\sum_{n'=1}^N {b}^*_k(X_{s}^{[k],n}, X_{s}^{[k],n'})-\frac{1}{N}\sum_{n'=1}^N {b}^*_k(X_{m\Delta}^{[k],n}, X_{m\Delta}^{[k],n'})\right) \dd s\\
\Delta_{\mu,m,n} & := &  \left(b_k^*[X_{m\Delta}^{[k],n}, {\mu}^{[k],N}_{m\Delta}] - b_k^*[X_{m\Delta}^{[k],n}, \wt{\mu}^{[k],N}_{m\Delta}]\right).
\end{eqnarray*}
For all $h$, we then have that
\begin{multline*}
\gamma^{[k]}_{N,M}(h) = \gamma^{[k]}_{N,M}(b) + \left\|h-b\right\|^2_{N,M,k} + 2 \nu_{N,M}(b_k^*-h)\\ + \dfrac{2}{NM}\sum_{m=0}^{M-1} \sum_{n=1}^N ({b}^*_k-h)\left[X_{m\Delta}^{[k],n}, \wt{\mu}^{[k],N}_{m\Delta}\right] R_{m,n}^{[k]}  \\ +\dfrac{2}{NM}  \sum_{m=0}^{M-1}\sum_{n=1}^N({b}^*_k-h)\left[X_{m\Delta}^{[k],n},\wt{\mu}^{[k],N}_{m\Delta}\right] \Delta_{\mu,m,n},
\end{multline*}
with 
\begin{equation*}
\nu_{N,M}(h):= \dfrac{1}{NM}  \sum_{m=0}^{M-1} \sum_{n=1}^N \Sigma_{m\Delta}^{[k],n} \left(\frac{1}{N}\sum_{n'=1}^N h(X_{m\Delta}^{[k],n},\wt{X}^{[k],n'}_{m\Delta})\right).    
\end{equation*}
By definition of $\wt{b}_k$, we deduce that for all $h$
\begin{multline*}
\left\|\wt{b}_k-b_k^*\right\|^2_{N,M,k} \leq
\left\|h-b_k^*\right\|^2_{N,M,k} + 2 \nu_{N,M}(\wt{b}_k-h) + \dfrac{2}{NM}\sum_{m=0}^{M-1} \sum_{n=1}^N (\wt{b}_k-h)[X_{m\Delta}^{[k],n}, \wt{\mu}^{[k],N}_{m\Delta}] R_{m,n}^{[k]}  \\
+\dfrac{2}{NM}  \sum_{m=0}^{M-1}\sum_{n=1}^N(\wt{b}_k-h)[X_{m\Delta}^{[k],n},\wt{\mu}^{[k],N}_{m\Delta}] \Delta_{\mu,m,n}.
\end{multline*}
Now we use the fact that for $a >0$, 
$2xy \leq \frac{1}{a}x^2+ay^2$, that yields
\begin{equation*}
\dfrac{2}{NM}\sum_{m=0}^{M-1} \sum_{n=1}^N (\wt{b}_k-h)[X_{m\Delta}^{[k],n}, \wt{\mu}^{[k],N}_{m\Delta}] R_{m,n}^{[k]} \leq \dfrac{1}{a} \left\| \wt{b}_k - h \right\|^{2}_{N,M,k}   +  \dfrac{a}{NM}\sum_{m = 0}^{M-1}\sum_{n = 1}^N  \left(R_{m,n}^{[k]}\right)^2,
\end{equation*}
and 
\begin{equation*}
\dfrac{2}{NM}\sum_{m=0}^{M-1} \sum_{n=1}^N (\wt{b}_k-h)[X_{m\Delta}^{[k],n}, \wt{\mu}^{[k],N}_{m\Delta}]  \Delta_{\mu,m,n} \leq \dfrac{1}{a} \left\| \wt{b}_k - h \right\|^{2}_{N,M,k} + \dfrac{a}{NM}\sum_{m = 0}^{M-1}\sum_{n = 1}^N
\Delta_{\mu,m,n}^2.
\end{equation*}
It follows from \cite[Section~7.8.1]{pages-numerical-proba} that
$$\E\left[\left(R_{m,n}^{[k]}\right)^2\right] \leq C \Delta.$$ 
Then, we can decompose
\begin{eqnarray*}
\E\left[\Delta_{\mu,m,n}^2\right] &\leq & \E\left[2\left(b_k^*[X_{m\Delta}^{[k],n}, {\mu}^{[k],N}_{m\Delta}]- b_k^*[X_{m\Delta}^{[k],n}, {\mu}_{m\Delta}^{[k]}]\right)^2+ \left(b_k^*[X_{m\Delta}^{[k],n}, \wt{\mu}^{[k],N}_{m\Delta}] - b_k^*[X_{m\Delta}^{[k],n}, {\mu}^{[k]}_{m\Delta}]\right)^2\right]\\
&\leq & C_L\E\left[\left(\mathcal{W}^2_1\left(\mu_{m\Delta}^{[k],N},  \mu^{[k]}_{m\Delta}\right) + \mathcal{W}^2_1\left(\wt{\mu}_{m\Delta}^{[k],N}, \mu^{[k]}_{m\Delta}\right)\right)\right] ,
\end{eqnarray*}
thus, from Proposition \ref{prop:chaos} and that $\mathcal{W}_{1}(\mu, \nu) \leq \mathcal{W}_2(\mu, \nu)$ we have the following control
$$\E\left[\Delta_{\mu,m,n}^2\right] \leq C_L N^{-1}.
$$
Finally, we get 
\begin{equation*}
\E\left[\left\|\wt{b}_k-b_k^*\right\|^2_{N,M,k} \right]\leq
\E\left[\left\|h-b_k^*\right\|^2_{N,M,k}\right] + 2 \nu_{N,M}(\wt{b}_k-h) + \dfrac{2}{a}  \E\left[\left\| \wt{b}_k - h \right\|^{2}_{N,M,k}\right] + Ca\left(\Delta +N^{-1}\right).
\end{equation*}
Now, since $\left\| \wt{b}_k - h \right\|^{2}_{N,M,k} \leq 2\left\| \wt{b}_k - b_k^* \right\|^{2}_{N,M,k} + 2
\left\|h - b_k^* \right\|^{2}_{N,M,k}$, 
we obtain for all $h$
\begin{equation*}
(1-4/a)\E\left[\left\|\wt{b}_k-b_k^*\right\|^2_{N,M,k} \right]\leq (1+4/a)\E\left[\left\|h-b_k^*\right\|^2_{N,M,k}\right] + 2\nu_{N,M}(\wt{b}_k-h) +C( \Delta + N^{-1}).   
\end{equation*}

Now, we focus on the term $\nu_{N,M}(\wt{b}_k-h)$. 
Since the functions $h$ and $\wt{b}_k$ are both in $S_{D_N,R}$, that is 
$h=\sum_{j_1,j_2=-R}^{D-1} h_{j_1,j_2} B_{j_1}B_{j_2}$,
we observe that by linearity, 
$$
\nu_{N, M}\left(\wt{b}_{k}-h\right)=\sum_{j_1=-R}^{D_N}\sum_{j_2=-R}^{D_N}\left(\w{a}_{j_1, j_2}-h_{j_1, j_2}\right) \nu_{N, M}\left(B_{j_1}B_{j_2}\right) $$
and using Cauchy-Schwarz inequality it comes
$$\nu_{N, M}\left(\wt{b}_{k}-h\right) \leq 2 {A_N}\sqrt{(D_N+R)^2 \log(N)} \sqrt{\sum_{j_1=-R}^{D_N}\sum_{j_2=-R}^{D_N} \nu_{N, M}^2\left(B_{j_1}B_{j_2}\right)}.
$$
Therefore, we get
$$
\begin{aligned}
\mathbb{E}\left[\nu_{N, M}\left(\wt{b}_{k}-h\right)\right] & \leq \sqrt{\E\left[\nu_{N, M}^2\left(\wt{b}_{N, M}-h\right)\right]} \\
& \leq  2 {A_N}\sqrt{(D_N+R)^2 \log(N)}  \sqrt{\mathbb{E}\left[\sum_{j_1=-R}^{D_N}\sum_{j_2=-R}^{D_N} \nu_{N, M}^2\left(B_{j_1}B_{j_2}\right)\right]}.
\end{aligned}
$$
Let us study the right hand side of the previous equation. 
\begin{lemma}\label{lem:contrast}
\begin{eqnarray*}\label{eq:sumproc}
\E\left[\sum_{j_1=-R}^{D_N}\sum_{j_2=-R}^{D_N} \nu^2_{N,M}(B_{j_1},B_{j_2})\right]
\leq\frac{1}{N}.
\end{eqnarray*}
\end{lemma}
\begin{proof}[Proof of Lemma \ref{lem:contrast}]
Let us write 
$$\E[\nu^2_{N,M}(B_{j_1}B_{j_2})]=\E\left[\left(\frac{1}{NM}\sum_{n=1}^N  Z_n \right)^2\right]$$
with $$Z_n:=\sum_{m=1}^M \Sigma_{m\Delta}^{[k],n} B_{j_1}(X_{m\Delta}^{[k],n}) \left(\frac{1}{N}\sum_{n'=1}^N B_{j_2}( \wtX_{m\Delta}^{[k],n^{'}})\right)$$
Even if the $Z_n$ random variables are not independent, $\E[Z_nZ_n']=0$ for $n\neq n'$. Indeed, for $m<m'$ (an similarly of $m>m')$ conditioning on $\mathcal{F}_{m'\Delta}$ gives $\E[ \Sigma_{m\Delta}^{[k],n} \Sigma_{m'\Delta}^{[k],n'}]=0$, and for $m=m'$, as the Brownian motions are independent, and $\E[ \Sigma_{m\Delta}^{[k],n} \Sigma_{m\Delta}^{[k],n'}]=0$. Thus the exchangeability of the particles gives
$$\E[\nu^2_{N,M}(B_{j_1}B_{j_2})] 
= \frac{N}{N^2M^2}\E[Z_1^2].$$
Then,
$$
\E[Z_1^2]= \E\left[\sum_{m=1}^M (\Sigma_{m\Delta}^{[k],1})^2 B^2_{j_1}(X_{m\Delta}^{[k],1}) \left(\frac{1}{N}\sum_{n'=1}^N B_{j_2}( \wtX_{m\Delta}^{[k],n^{'}})\right)^2\right]
$$
using conditioning technique and that $\E[\Sigma_{m\Delta}^{[k],1}|\mathcal{F}_{m\Delta}]=0$.
We have now,
$$\E[\nu^2_{N,M}(B_{j_1}B_{j_2})] =\frac{N}{N^2M^2}\sum_{m=1}^M
\E \left[ \left(\Sigma_{m\Delta}^{[k],1}\right)^2 B^2_{j_1}(X_{m\Delta}^{[k],1}) \left(\frac{1}{N}\sum_{n'=1}^N B_{j_2}( \wtX_{m\Delta}^{[k],n^{'}})\right)^2\right].
$$
Note that 
$$\E\left[ \left(\Sigma_{m,n}^{[k]}\right)^2 | \mathcal{F}_{m \Delta}\right]= \frac{1}{\Delta}= M 
$$
thus,
$$\E[\nu^2_{N,M}(B_{j_1}B_{j_2})] =\frac{NM}{N^2M^2}\sum_{m=1}^M
\E \left[ B^2_{j_1}(X_{m\Delta}^{[k],1}) \left(\frac{1}{N}\sum_{n'=1}^N B_{j_2}( \wtX_{m\Delta}^{[k],n^{'}})\right)^2\right].
$$
We use now Cauchy-Schwartz inequality to obtain
$$\E[\nu^2_{N,M}(B_{j_1}B_{j_2})] \leq \frac{NM}{N^2M^2 N}\sum_{m=1}^M
\E \left[ B^2_{j_1}(X_{m\Delta}^{[k],1}) \sum_{n'=1}^N B^2_{j_2}( \wtX_{m\Delta}^{[k],n^{'}})\right].
$$
Since $\sum_{j_1,j_2} B^2_{j_1}(x)B^2_{j_2}(y) \leq 1$ for all $(x,y)$, we obtain that 
\begin{eqnarray*}\label{eq:sumproc}
\E\left[\sum_{j_1=-R}^{D_N}\sum_{j_2=-R}^{D_N} \nu^2_{N,M}(B_{j_1},B_{j_2})\right]
& \leq  & \frac{1}{N^2M}\E \left[\sum_{m=1}^M\sum_{n'=1}^N \sum_{j_1=-R}^{D_N}\sum_{j_2=-R}^{D_N}  B^2_{j_1}(X_{m\Delta}^{[k],1})B^2_{j_2}( \wtX_{m\Delta}^{[k],n^{'}}) \right]\\
&\leq & \frac{1}{N}
\end{eqnarray*}
\end{proof}
We finally have obtained that 
$$\E[\nu_{N,M}(\wt{b}_k-h)] \leq 2{A_N} \sqrt{\frac{(D_N+R)^2 \log(N)}{N}}.$$
Finally, using both Proposition~\ref{prop:splineapprox}, and~\ref{prop:bkBar}, we deduce that for $A_N = \log(N)$, and $N$ large enough
\begin{equation}\label{eq:infbiais}
\inf_{h \in \mathcal{S}_{{D_N},R}}
\E\left[\left\|h-{b}_k^*\right\|_{N,M,k}^{2}\right] \leq \frac{C_L A_N^2}{{D_N}^2} + \E\left[\left\|\bar{b}^*_k-b_k^*\right\|^2_{M,k,k} \right] \leq \dfrac{C_L\log^2(N)}{D_N^2}+ \dfrac{1}{N}.
\end{equation}
It yields 
\begin{equation*}
\E\left[\left\|\wt{b}_k-b_k^*\right\|^2_{N,M,k} \right]\leq C_L\left(\frac{\log^2(N)}{{D_N}^2} +\log^{3/2}(N)\sqrt{\frac{(D_N+R)^2}{N}}\right) +C( \Delta + N^{-1}).   
\end{equation*}
\end{proof}
We are now ready to prove the main result of the section.

\subsubsection{Proof of Theorem~\ref{theo:riskbhatMkk}}

From Proposition \ref{prop:estimbresult},
since for $N$ large enough, we have that under Assumption~\ref{ass:bsup_et_lip_in_x_and_y} that 
\begin{equation*}
\left\|b_k^*\right\|_\infty \leq \sqrt{\log(N)},
\end{equation*}    
we deduce that
\begin{equation}
\label{eq:eqMkk1}
\mathbb{E}\left[\left\|\widehat{b}_k-b_k^*\right\|^2_{N,M,k}\right] \leq \mathbb{E}\left[\left\|\widetilde{b}_k-b_k^*\right\|^2_{N,M,k}\right] \leq C\left(\dfrac{{\log^2(N)}}{D_N^2} +{\log^{3/2}(N)}\sqrt{\dfrac{(D_N+R)^2}{N}}+ \Delta\right),  
\end{equation}
where $C>0$ depends on 
$x_0, L$. 
For each $h \in \mathcal{S}_{D_N,R}$, we denote
we denote by $\bar{h}$ its thresholded counterpart
\begin{equation*}
\bar{h}(\cdot):=h(\cdot) \one_{|h(\cdot)| \leq \log^{1/2}(N)} + \text{sgn}(h) \log^{1/2}(N) \one_{\{|h(\cdot)| > \log^{1/2}(N)\}}.
\end{equation*}
We also introduce the set 
$$\mathcal{H}_{D_N,R} := \left\{\bar{h}, h \in \mathcal{S}_{D_N,R}\right\}.$$
From Equation~\eqref{eq:eqMkk1}, and the following decomposition
\begin{equation}
\label{eq:eqDecompRisk}
\E\left[\left\|\w{b}_k-{b}_k^*\right\|_{M,k,k}^2\right]=\E\left[\left\|\w{b}_k-{b}^*_k\right\|_{M,k,k}^2\right]-2 \E\left[\left\|\w{b}_k-b^*_k\right\|_{N,M,k}^2\right]+2\E\left[\left\|\w{b}_k-{b}^*_k\right\|_{N,M,k}^2\right],
\end{equation}
we get
\begin{eqnarray}
\label{eq:eqMkk4}
\E\left[\left\|\w{b}_k-{b}_k^*\right\|^2_{M,k,k}\right] \leq C\left(\dfrac{{\log^2(N)}}{D_N^2} + {\log^{3/2}(N)}\sqrt{\dfrac{(D_N+R)^2}{N}}+ \Delta\right) \nonumber\\
+
\mathbb{E}\left[\sup _{\bar{h} \in \mathcal{H}_{D_N,R}}\left\|\bar{h}-{b}_k^*\right\|_{M, k,k}^2-2\left\|\bar{h}-{b}_k^*\right\|_{M, N,k}^2\right].
\end{eqnarray}
Hence to prove the result, it remains to control the second term in {\it r.h.s.} of the above equation.
To this end, we introduce the set
\begin{equation*}
\mathcal{G}_{D_N,R}  := \left\{g : x \mapsto \left(\dfrac{1}{N}\sum_{n'=1}^N(\bar{h}-b_k^*)\left(x, \tilde{X}_{m\Delta}^{k, [n']}\right)\right)^2, \;\; \bar{h} \in \mathcal{H}_{D_N,R}\right\}.    
\end{equation*}

Then, we observe that
\begin{multline}
\label{eq:eqMkk3}
\mathbb{E}\left[\sup _{\bar{h} \in \mathcal{H}_{D_N,R}}\left\|\bar{h}-{b}_k^*\right\|_{M, k,k}^2-2\left\|\bar{h}-{b}_k^*\right\|_{M, N,k}^2\right] \leq\\  \E\left[\sup _{g \in \mathcal{G}_{D_N,R}}\left\{\dfrac{1}{M}\sum_{m=0}^{M-1}\mathbb{E}\left[g(\xi^{[k]}_{m\Delta})\right] - \dfrac{2}{MN}\sum_{m=0}^{M-1}\sum_{n=1}^Ng(X_{m\Delta}^{[k],n} ) \right\}\right].
\end{multline}
Now, conditional on $\tilde{X}$, we apply Theorem~18
in~\cite{hoffmann2021} with $\phi(t,X_t) = g(X_t)$, and $\rho$ the uniform distribution on $(m\Delta)_{m \in \left\{0, \ldots, M-1\right\}}$.

Let $g \in \mathcal{G}_{D_N,R}$.
Since, for $N$ large enough $\left\|b_k^*\right\|_{\infty} \leq \sqrt{\log(N)} $, and by definition $\left\|\bar{h}\right\|_{\infty} \leq \sqrt{\log(N)}$, for $\bar{h} \in \mathcal{H}_{D_N,R}$, we deduce that there exists $\bar{h} \in \mathcal{H}_{D_N,R}$ such that
\begin{equation*}
\left\|g\right\|_{\infty} \leq \left\|(\bar{h}-b^*_k)^2\right\|_{\infty} \leq 4 \log(N). 
\end{equation*}
Furthermore, since $g(x) \geq 0$, for all $x \in \mathbb{R}$,
\begin{equation*}
 \E\left[\frac{1}{M}\sum_{m=0}^{M-1}g^2(\xi_{m\Delta})\right] \leq 
\left\|g\right\|_{\infty} \dfrac{1}{M}\sum_{m=0}^{M-1}\mathbb{E}\left[g(\xi^{[k]}_{m\Delta})\right] \leq 4\log(N)\dfrac{1}{M}\sum_{m=0}^{M-1}\mathbb{E}\left[g(\xi^{[k]}_{m\Delta})\right],
\end{equation*}    
Applying Theorem~18 in~\cite{hoffmann2021}, we get that for each $g \in \mathcal{G}_{D_N,R}$ conditional on $\wt{X}$ that for all $x > 0$
\begin{multline*}
\mathbb{P}\left(\dfrac{1}{M}\sum_{m=0}^{M-1}\mathbb{E}\left[g(\xi^{[k]}_{m\Delta})\right] - \dfrac{2}{MN}\sum_{m=0}^{M-1}\sum_{n=1}^Ng(X_{m\Delta}^{[k],n} ) \geq x\right) \leq \\  \mathbb{P}\left(\dfrac{1}{M}\sum_{m=0}^{M-1}\mathbb{E}\left[g(\xi^{[k]}_{m\Delta})\right] - \dfrac{1}{MN}\sum_{m=0}^{M-1}\sum_{n=1}^Ng(X_{m\Delta}^{[k],n} ) \geq \left(\dfrac{x + \dfrac{1}{M}\sum_{m=0}^{M-1}\mathbb{E}\left[g(\xi^{[k]}_{m\Delta})\right]}{2} \right)\right) \\ \leq \exp\left(-CN\dfrac{\left(x + \dfrac{1}{M}\sum_{m=0}^{M-1}\mathbb{E}\left[g(\xi^{[k]}_{m\Delta})\right]\right)^2}{\E\left[\frac{1}{M}\sum_{m=0}^{M-1}g^2(\xi_{m\Delta})\right] + \left(x + \dfrac{1}{M}\sum_{m=0}^{M-1}\mathbb{E}\left[g(\xi^{[k]}_{m\Delta})\right]\right)\left\|g\right\|_{\infty}}\right)
\end{multline*}
where $C>0$ depends on $b^*_k$.
Now, we observe that
\begin{equation*}
\E\left[\frac{1}{M}\sum_{m=0}^{M-1}g^2(\xi_{m\Delta})\right] + \left(x + \dfrac{1}{M}\sum_{m=0}^{M-1}\mathbb{E}\left[g(\xi^{[k]}_{m\Delta})\right]\right)\left\|g\right\|_{\infty} \leq 8\log(N)\left(x + \dfrac{1}{M}\sum_{m=0}^{M-1}\mathbb{E}\left[g(\xi^{[k]}_{m\Delta})\right]\right),
\end{equation*}
which implies that
Conditional on $\wt{X}$, for all $g \in \mathcal{G}_{D_N,R}$, and $x > 0$
\begin{align}
\label{eq:eqMkk2}
\mathbb{P}\left(\dfrac{1}{M}\sum_{m=0}^{M-1}\mathbb{E}\left[g(\xi^{[k]}_{m\Delta})\right] - \dfrac{2}{MN}\sum_{m=0}^{M-1}\sum_{n=1}^Ng(X_{m\Delta}^{[k],n} ) \geq x\right) \nonumber \\
\leq \exp\left(-C\dfrac{N}{\log(N)}\left(x+\dfrac{1}{M}\sum_{m=0}^{M-1}\mathbb{E}\left[g(\xi^{[k]}_{m\Delta})\right]\right)\right)\nonumber \\
\leq \exp\left(-\dfrac{CNx}{\log(N)}\right).
\end{align}

To conclude the proof, for $\varepsilon > 0$, we consider $\mathcal{G}_{\varepsilon}$ an $\varepsilon$-net of $\mathcal{G}_{D_N,R}$ {\it w.r.t.} the $\left\|.\right\|_{\infty}$ norm and we denote by $\mathcal{N}_{\infty}\left(\mathcal{G}_{D_N,R}, \varepsilon\right)$ its cardinality.
We have that for each $g \in \mathcal{G}_{D_N,R}$, there exists $g_{\varepsilon} \in \mathcal{G}_{\varepsilon}$ such that
\begin{equation*}
\left\|g-g_{\varepsilon}\right\|_{\infty} \leq \varepsilon.    
\end{equation*}
Therefore, we have
\begin{multline*}
\dfrac{1}{M}\sum_{m=0}^{M-1}\mathbb{E}\left[g(\xi^{[k]}_{m\Delta})\right] - \dfrac{2}{MN}\sum_{m=0}^{M-1}\sum_{n=1}^Ng(X_{m\Delta}^{[k],n}) = \dfrac{1}{M}\sum_{m=0}^{M-1}\mathbb{E}\left[g(\xi^{[k]}_{m\Delta})\right] - \dfrac{2}{MN}\sum_{m=0}^{M-1}\sum_{n=1}^Ng(X_{m\Delta}^{[k],n})- \\ \left( \dfrac{1}{M}\sum_{m=0}^{M-1}\mathbb{E}\left[g_{\varepsilon}(\xi^{[k]}_{m\Delta})\right] - \dfrac{2}{MN}\sum_{m=0}^{M-1}\sum_{n=1}^Ng_{\varepsilon}(X_{m\Delta}^{[k],n})\right)\\+\dfrac{1}{M}\sum_{m=0}^{M-1}\mathbb{E}\left[g_{\varepsilon}(\xi^{[k]}_{m\Delta})\right] - \dfrac{2}{MN}\sum_{m=0}^{M-1}\sum_{n=1}^Ng_{\varepsilon}(X_{m\Delta}^{[k],n}).
\end{multline*}
From the above inequality, we deduce that
\begin{equation*}
\dfrac{1}{M}\sum_{m=0}^{M-1}\mathbb{E}\left[g(\xi^{[k]}_{m\Delta})\right] - \dfrac{2}{MN}\sum_{m=0}^{M-1}\sum_{n=1}^Ng(X_{m\Delta}^{[k],n}) \leq 3\varepsilon + \dfrac{1}{M}\sum_{m=0}^{M-1}\mathbb{E}\left[g_{\varepsilon}(\xi^{[k]}_{m\Delta})\right] - \dfrac{2}{MN}\sum_{m=0}^{M-1}\sum_{n=1}^Ng_{\varepsilon}(X_{m\Delta}^{[k],n}),
\end{equation*}
which yields
\begin{multline*}
\mathbb{E}\left[\sup _{g \in \mathcal{G}_{D_N,R}}\left\{\dfrac{1}{M}\sum_{m=0}^{M-1}\mathbb{E}\left[g(\xi^{[k]}_{m\Delta})\right] - \dfrac{2}{MN}\sum_{m=0}^{M-1}\sum_{n=1}^Ng(X_{m\Delta}^{[k],n} ) \right\}\right]
\leq 3 \varepsilon + \\\mathbb{E}\left[\sup _{g \in \mathcal{G}_{\varepsilon}}\left\{\dfrac{1}{M}\sum_{m=0}^{M-1}\mathbb{E}\left[g(\xi^{[k]}_{m\Delta})\right] - \dfrac{2}{MN}\sum_{m=0}^{M-1}\sum_{n=1}^Ng(X_{m\Delta}^{[k],n} ) \right\}\right].
\end{multline*}
Now, we bound the second in the {\it r.h.s.} of the above equation.
For each $u \geq 0$, we have that 
\begin{multline*}
\mathbb{E}\left[\sup _{g \in \mathcal{G}_{\varepsilon}}\left\{\dfrac{1}{M}\sum_{m=0}^{M-1}\mathbb{E}\left[g(\xi^{[k]}_{m\Delta})\right] - \dfrac{2}{MN}\sum_{m=0}^{M-1}\sum_{n=1}^Ng(X_{m\Delta}^{[k],n} ) \right\}\right] \leq \\  u+ \int_{x \geq u} \mathbb{P}\left(\sup_{g \in \mathcal{G}_{\varepsilon}} \dfrac{1}{M}\sum_{m=0}^{M-1}\mathbb{E}\left[g(\xi^{[k]}_{m\Delta})\right] - \dfrac{2}{MN}\sum_{m=0}^{M-1}\sum_{n=1}^Ng(X_{m\Delta}^{[k],n} ) \geq x\right)   {\rm d} x.
\end{multline*}
Hence, from Equation~\eqref{eq:eqMkk2}, we deduce that
\begin{multline*}
\mathbb{E}\left[\sup _{g \in \mathcal{G}_{\varepsilon}}\left\{\dfrac{1}{M}\sum_{m=0}^{M-1}\mathbb{E}\left[g(\xi^{[k]}_{m\Delta})\right] - \dfrac{2}{MN}\sum_{m=0}^{M-1}\sum_{n=1}^Ng(X_{m\Delta}^{[k],n} ) \right\}\right] \leq \\u + \mathcal{N}_{\infty}\left(\mathcal{G}_{D_N,R}, \varepsilon\right)\int_{x \geq u} \exp\left(-C\dfrac{Nx}{\log(N)}\right) {\rm d}x,   
\end{multline*}
which yields, with $u=C \dfrac{\log(N)\log(\mathcal{N}_{\infty}\left(\mathcal{G}_{D_N,R}, \varepsilon\right))}{N}$,
\begin{equation*}
\mathbb{E}\left[\sup _{g \in \mathcal{G}_{\varepsilon}}\left\{\dfrac{1}{M}\sum_{m=0}^{M-1}\mathbb{E}\left[g(\xi^{[k]}_{m\Delta})\right] - \dfrac{2}{MN}\sum_{m=0}^{M-1}\sum_{n=1}^Ng(X_{m\Delta}^{[k],n} ) \right\}\right] \leq C\dfrac{\log(N)\log\left(\mathcal{N}_{\infty}\left(\mathcal{G}_{D_N,R}, \varepsilon\right)\right)}{N}
\end{equation*}
Hence from the above equation, and Equation~\eqref{eq:eqMkk4} and ~\eqref{eq:eqMkk3}, we get
\begin{equation}
\label{eq:eqMkk5}
\E\left[\left\|\w{b}_k-{b}_k^*\right\|^2_{M,k,k}\right] \leq C\left(\dfrac{{\log^2(N)}}{D_N^2} + {\log^{3/2}(N)}\sqrt{\dfrac{(D_N+R)^2}{N}}+ \Delta + \dfrac{\log(N)\log\left(\mathcal{N}_{\infty}\left(\mathcal{G}_{D_N,R}, \varepsilon\right)\right)}{N}+3\varepsilon\right).
\end{equation}
Therefore, to conclude the proof, it remains to upper-bound 
$\mathcal{N}_{\infty}\left(\mathcal{G}_{D_N,R}, \varepsilon\right)$.
To this extent, we observe that since for each $h \in \mathcal{S}_{D_N,R}$ with $h = \sum_{j_1,j_2} a_{j_1,j_2} B_{j_1,j_2}$ we have 
$\left\|{\bf a}\right\|_2 \leq (D_N+R){A_N}\log^{1/2}(N)$, we deduce that
\begin{equation}\label{eq:epsnetS}
\mathcal{N}_{\infty}\left(\mathcal{S}_{D_N,R}, \varepsilon\right) \leq \left(\dfrac{3(D_N+R){A_N}\log^{1/2}(N)}{\varepsilon}\right)^{(D_N+R)^2}.   
\end{equation}
Now, let $g, g_{\varepsilon} \in \mathcal{G}_{D_N,R}$, and $h, h_{\varepsilon} \in \mathcal{S}_{D_N,R}$ such that
\begin{equation*}
g(x) = \left(\dfrac{1}{N}\sum_{n=1}^N(\bar{h}-b_k^*)(x, \tilde{X}_{m\Delta}^{[k],n})\right)^2, \;\; {\rm and}, \;\;     g_{\varepsilon}(x) = \left(\dfrac{1}{N}\sum_{n=1}^N(\bar{h}_{\varepsilon}-b_k^*)(x, \tilde{X}_{m\Delta}^{[k],n})\right)^2,
\end{equation*}
such that 
\begin{equation*}
\left\|h-h_{\varepsilon}\right\|_{\infty} \leq \varepsilon.    
\end{equation*}
Since we have $\left\|\bar{h}\right\| _{\infty} \leq \log^{1/2}(N)$, and$ \left\|\bar{h}_{\varepsilon}\right\| _{\infty} \leq \log^{1/2}(N)$,
we deduce that
\begin{equation*}
\left|g(x)-g_{\varepsilon}(x)\right|\leq \left(\left\|\bar{h}\right\|_{\infty}+\left\|\bar{h}_{\varepsilon}\right\|_{\infty} + 2\left\|b_k^*\right\|_{\infty}\right) \left(\left\|\bar{h}-\bar{h}_{\varepsilon}\right\|_{\infty}\right) \leq 4\log^ {1/2}(N)\left\|{h}-{h}_{\varepsilon}\right\|_{\infty} \leq 4\log^ {1/2}(N)\varepsilon.
\end{equation*}
Hence, we deduce that
\begin{equation}\label{eq:NinftyG}
\mathcal{N}_{\infty}\left(\mathcal{G}_{D_N,R}, \varepsilon\right) \leq    \mathcal{N}_{\infty}\left(\mathcal{S}_{D_N,R}, \dfrac{\varepsilon}{4\log^{1/2}(N)}\right) \leq  \left(\dfrac{12(D_N+R){A_N}\log(N)}{\varepsilon}\right)^{(D_N+R)^2}. 
\end{equation}
Then, for $N$ large enough,  we deduce from the above inequality, and Equation~\eqref{eq:eqMkk5}, with $\varepsilon = \dfrac{12(D_N+R){A_N}\log(N)}{N}$, and {$A_N = \log(N)$} we obtain 
\begin{equation*}
\E\left[\left\|\w{b}_k-{b}_k^*\right\|^2_{M,k,k}\right] \leq C\left(\dfrac{{\log^2(N)}}{D_N^2} + {\log^{3/2}(N)}\sqrt{\dfrac{(D_N+R)^2}{N}}+ \Delta + \dfrac{(D_N+R)^2{\log(N)}}{N}\right),   
\end{equation*}
which yields the desired result and ends the proof.

\subsection{Proof of Lemma~\ref{lem:boundIntegrated}}
Recall that
\begin{equation}\label{eq:def-norm-mkk-prime}
\Vert h \Vert_{M, k, k'}^2\coloneqq \E \left[\frac{1}{M} \sum_{m=0}^{M-1}\left( \frac{1}{N}\sum_{n'=1}^{N}h\left(\xi_{m\Delta}^{[k']}, \, \widetilde{X}_{m\Delta}^{[k],n'}\right)\right)^2\right].
\end{equation}
First of all, since $\w{b}_k$, and $b_k^*$ are bounded (for $N$ large enough) by $\sqrt{\log(N)}$, we have that
\begin{multline*}
\left|\| \w{b}_k-b_k^* \|_{M, k, k', \star}^2 - \| \w{b}_k-b_k^* \|_{M, k, k'}^2\right|
\leq \\ \dfrac{2\sqrt{\log(N)}}{M}\sum_{m=0}^M
\mathbb{E}\left[\left|\int_{\R} (\w{b}_k-b_k^*)\big(\xi_{m\Delta}^{[k']}, y\big)\mu_{m\Delta}^{[k]}(dy)-\frac{1}{N}\sum_{n'=1}^{N} (\w{b}_k-b_k^*)\left( \xi_{m\Delta}^{[k']}, \, \widetilde{X}_{m\Delta}^{[k],n'}\right)\right|\right].
\end{multline*}
Now, for each $m \in \{0, \ldots, M-1\}$, we provide an upper-bound for the term
\begin{multline*}
\mathbb{E}\left[\left|\int_{\R} (\w{b}_k-b_k^*)\big(\xi_{m\Delta}^{[k']}, y\big)\mu_{m\Delta}^{[k]}(dy)-\frac{1}{N}\sum_{n'=1}^{N} (\w{b}_k-b_k^*)\left( \xi_{m\Delta}^{[k']}, \, \widetilde{X}_{m\Delta}^{[k],n'}\right)\right|\right].   
\end{multline*}

Let $m \in \left\{0, \ldots, M-1\right\}$ be fixed. 
We have that 
\begin{multline*}
\left|\int_{\R} (\w{b}_k-b_k^*)\big(\xi_{m\Delta}^{[k']}, y\big)\mu_{m\Delta}^{[k]}(dy)-\frac{1}{N}\sum_{n'=1}^{N} (\w{b}_k-b_k^*)\left( \xi_{m\Delta}^{[k']}, \, \widetilde{X}_{m\Delta}^{[k],n'}\right)\right| \\\leq \sup_{\bar{h} \in \mathcal{H}_{D_N,R}} \left|\int_{\R} (\bar{h}-b_k^*)\big(\xi_{m\Delta}^{[k']}, y\big)\mu_{m\Delta}^{[k]}(dy)-\frac{1}{N}\sum_{n'=1}^{N} (\bar{h}-b_k^*)\left( \xi_{m\Delta}^{[k']}, \, \widetilde{X}_{m\Delta}^{[k],n'}\right)\right|.
\end{multline*}
 Now, for $\varepsilon > 0$, we consider $\mathcal{H}_{\varepsilon}$ an $\varepsilon$-net of $\mathcal{H}_{D_N,R}$ with respect to the $\left\|\cdot\right\|_{\infty}$ and denote by  $\mathcal{N}_\varepsilon:= \mathcal{N}_{\infty}\left(\mathcal{H}_{D_N,R}, \varepsilon\right) $ its cardinality.
Then, we have that 
\begin{multline*}
\left|\int_{\R} (\w{b}_k-b_k^*)\big(\xi_{m\Delta}^{[k']}, y\big)\mu_{m\Delta}^{[k]}(dy)-\frac{1}{N}\sum_{n'=1}^{N} (\w{b}_k-b_k^*)\left( \xi_{m\Delta}^{[k']}, \, \widetilde{X}_{m\Delta}^{[k],n'}\right)\right| \leq \\ 2\varepsilon +  \sup_{\bar{h} \in \mathcal{H_{\varepsilon}}} \left|\int_{\R} (\bar{h}-b_k^*)\big(\xi_{m\Delta}^{[k']}, y\big)\mu_{m\Delta}^{[k]}(dy)-\frac{1}{N}\sum_{n'=1}^{N} (\bar{h}-b_k^*)\left( \xi_{m\Delta}^{[k']}, \, \widetilde{X}_{m\Delta}^{[k],n'}\right)\right|.   
\end{multline*}

Let us denote 
$$\P(\cdot| \xi^{[k']}_{m\Delta})= \mathbb{P}_{\xi^{[k']}_{m\Delta}}(\cdot).$$
Let $\bar{h} \in \mathcal{H}_{\varepsilon}$.
From Theorem~18 in~\cite{hoffmann2021}, we deduce  that there exist $\kappa_1, \kappa_2 > 0$
such that, for all $x > 0 $
\begin{multline*}
\mathbb{P}_{\xi^{[k']}_{m\Delta}}\left(\left|\int_{\R} (\bar{h}-b_k^*)\big(\xi_{m\Delta}^{[k']}, y\big)\mu_{m\Delta}^{[k]}(dy)-\frac{1}{N}\sum_{n'=1}^{N} (\bar{h}-b_k^*)\left( \xi_{m\Delta}^{[k']}, \, \widetilde{X}_{m\Delta}^{[k],n'}\right)\right| \geq x\right) \\\leq 2 \kappa_1 \exp\left(-\kappa_2\dfrac{Nx^2}{\left\|\bar{h}-{b}^*_k\right\|^2_{L^2(\mu_{m\Delta})} +\left\|\bar{h}-{b}^*_k\right\|_{\infty}x } \right).  
\end{multline*}
Let us notice that
\begin{equation*}
\left\|\bar{h}-{b}^*_k\right\|^2_{L^2(\mu_{m\Delta})} \leq 4\log(N), \;\; {\rm and}, \;\;  \left\|\bar{h}-{b}^*_k\right\|_{\infty} \leq 2\sqrt{\log(N)}. 
\end{equation*}
Then, for all $x>0$, 
\begin{multline*}
\P\left(\sup_{\bar{h}\in \mathcal{H}_\varepsilon}\left|\int_{\R} (\bar{h}-b_k^*)\big(\xi_{m\Delta}^{[k']}, y\big)\mu_{m\Delta}^{[k]}(dy)-\frac{1}{N}\sum_{n'=1}^{N} (\bar{h}-b_k^*)\left( \xi_{m\Delta}^{[k']}, \, \widetilde{X}_{m\Delta}^{[k],n'}\right)\right| \geq x\right) \\
\leq \min\left(1, \cN_{\varepsilon} 2 \kappa_1 \exp\left(-\kappa_2\dfrac{Nx^2}{\left\|\bar{h}-{b}^*_k\right\|^2_{L^2(\mu_{m\Delta})} +\left\|\bar{h}-{b}^*_k\right\|_{\infty}x } \right)\right).
\end{multline*}
Now, we integrate the previous equation to obtain the control for the expectation, 
\begin{multline*}
\E\left[\sup_{\bar{h}\in \mathcal{H}_\varepsilon}\left|\int_{\R} (\bar{h}-b_k^*)\big(\xi_{m\Delta}^{[k']}, y\big)\mu_{m\Delta}^{[k]}(dy)-\frac{1}{N}\sum_{n'=1}^{N} (\bar{h}-b_k^*)\left( \xi_{m\Delta}^{[k']}, \, \widetilde{X}_{m\Delta}^{[k],n'}\right)\right| \right] \\
\leq \int_0^\infty \min\left(1, \cN_{\varepsilon} 2 \kappa_1 \exp\left(-\kappa_2\dfrac{Nx^2}{\left\|\bar{h}-{b}^*_k\right\|^2_{L^2(\mu_{m\Delta})} +\left\|\bar{h}-{b}^*_k\right\|_{\infty}x } \right)\right)dx\\
:=I_1+I_2,
\end{multline*}
where we split the integral according to whether $x \leq C_h$ ($I_1$) or $x > C_h$ ($I_2$)
with $C_h:=\|\bar{h}-{b}^*_k\|^2_{L^2(\mu_{m\Delta})}/\|\bar{h}-{b}^*_k\|_\infty$.
First,  we bound $I_1$. We have that 
$$I_1 = \int_0^{C_h}  \min\left(1, \cN_{\varepsilon} 2 \kappa_1\exp\left(-\kappa_2\dfrac{Nx^2}{\left\|\bar{h}-{b}^*_k\right\|^2_{L^2(\mu_{m\Delta})} +\left\|\bar{h}-{b}^*_k\right\|_{\infty}x } \right)\right) dx.$$
Since for $x \leq \|\bar{h}-{b}^*_k\|^2_{L^2(\mu_{m\Delta})}/\|\bar{h}-{b}^*_k\|_\infty=C_h$, the denominator in the exponential is smaller than $8\log(N)$, we obtain 
$$I_1 \leq C\int_0^\infty 
\exp\left(-\left(\kappa_2\dfrac{Nx^2}{8\log(N)}- \log(2\kappa_1\cN_{\varepsilon})\right)_+\right)dx.$$
Now, we observe that
\begin{equation*}
I_1 \leq C \left(\int_{0}^{t_0} dx + \int_{t_0}^{+\infty} \exp\left(-\left(\kappa_2\dfrac{Nx^2}{8\log(N)}- \log(2\kappa_1\cN_{\varepsilon})\right)\right)dx  \right),   
\end{equation*}
with $t_0:=\left(\log(2\kappa_1\cN_{\varepsilon}) 8 \log(N) / \kappa_2N\right)^{1/2}$. Therefore, using that for all $t > 0$
\begin{equation*}
\int_t^\infty e^{-cx^2} dx \leq \dfrac{e^{-ct^2}}{2tc},
\end{equation*}
we deduce that 
\begin{equation*}
I_1 \leq C\sqrt{\frac{\log(2 \kappa_1\cN_{\varepsilon}) \log(N)}{N}}.    
\end{equation*}
Similarly, for  $x> C_h=\|\bar{h}-{b}^*_k\|^2_{L^2(\mu_{m\Delta})}/\|\bar{h}-{b}^*_k\|_\infty$,  we get
$$I_2\leq C \int_{0}^\infty \exp\left(-\left(\kappa_2\dfrac{Nx}{4\sqrt{\log(N)} } -\log(2\kappa_1\mathcal{N}_\varepsilon)\right)_+\right) dx  \leq C \frac{\log(2\kappa_1\cN_\varepsilon) \sqrt{\log(N)}}{N}.$$
Therefore, we deduce that
\begin{equation*}
\mathbb{E}\left[\sup_{\bar{h} \in \mathcal{H_{\varepsilon}}} \left|\int_{\R} (\bar{h}-b_k^*)\big(\xi_{m\Delta}^{[k']}, y\big)\mu_{m\Delta}^{[k]}(dy)-\frac{1}{N}\sum_{n'=1}^{N} (\bar{h}-b_k^*)\left( \xi_{m\Delta}^{[k']}, \, \widetilde{X}_{m\Delta}^{[k],n'}\right)\right|\right] 
\leq \mathfrak{C} \sqrt{\log(2\kappa_1\mathcal{N}_{\varepsilon}) }\sqrt{\frac{\log(N)}{N}}.
\end{equation*}

Now, we use that 
$T(x) := x \one_{\{|x| \leq \log^{1/2}(N)\}} + \text{sgn}(x) \log^{1/2}(N) \one_{\{|x| > \log^{1/2}(N)\}}$,the truncation operator is $1$-Lipschitz, thus for any $h_1, h_2 \in \mathcal{S}_{D_N,R}$:
$$\|\bar{h}_1 - \bar{h}_2\|_\infty \leq \|h_1 - h_2\|_\infty.$$
Because distances do not increase under truncation, the image of any $\varepsilon$-net of $\mathcal{S}_{D_N,R}$ under $T$ automatically forms an $\varepsilon$-net for $\mathcal{H}_{D_N,R}$. Consequently, from Equation~\eqref{eq:epsnetS}, we deduce that
\begin{equation*}
   \mathcal{N}_\varepsilon= \mathcal{N}_{\infty}\left(\mathcal{H}_{D_N,R}, \varepsilon\right) \leq \mathcal{N}_{\infty}\left(\mathcal{S}_{D_N,R}, \varepsilon\right) \leq \left(\dfrac{3(D_N+R){A_N}\log^{1/2}(N)}{\varepsilon}\right)^{(D_N+R)^2}.
\end{equation*}
Finally, with $\varepsilon = \dfrac{12(D_N+R){A_N}\log(N)}{N}$, we get
\begin{equation*}
\log(\mathcal{N}_{\varepsilon}) \leq C D_N^2\log(N),  
\end{equation*}
and the desired result.
$\square$

\subsubsection{Proof of Corollary \ref{coro:RateIntegratedNorm}}
As we have obtained in Theorem~\ref{theo:riskbhatMkk} that 
\begin{equation*}
\E\left[ \|\w{b}_k-{b}^*_k\|^2_{M,k,k}\right] 
\leq 
\mathfrak{C} \left( \frac{\log^2(N)}{D_N^2} + \log^{3/2}(N)\sqrt{\frac{(D_N+R)^2}{N}}+ \Delta  \right).
\end{equation*}
Then, 
we write
$$\E\left[ \|\w{b}_k-{b}^*_k\|^2_{M,k,k,*}\right]\leq \E\left[ \|\w{b}_k-{b}^*_k\|^2_{M,k,k}\right]+\left( \E\left[ \|\w{b}_k-{b}^*_k\|^2_{M,k,k,*}\right]-\E\left[ \|\w{b}_k-{b}^*_k\|^2_{M,k,k}\right]\right)$$
and from Lemma~\ref{lem:boundIntegrated} we have 
$$\left|\E\left[ \|\w{b}_k-{b}^*_k\|^2_{M,k,k,*}\right]-\E\left[ \|\w{b}_k-{b}^*_k\|^2_{M,k,k}\right]\right|\leq \mathfrak{C}\frac{\log(N)D_N}{\sqrt{N}}.
$$
Then, taking $\Delta=O(1/N)$, $D_N\propto N^{1/6}$ yields
$$\E\left[ \|\w{b}_k-{b}^*_k\|^2_{M,k,k,*}\right] \propto N^{-1/3},$$
which is the attended result.

\subsection{Proofs of Section \ref{sec:classiffinal}}

\subsubsection{Proof of Proposition~\ref{prop:changementnorme}}\label{subsubsec:proof-changementnorme}
We recall that the integrated norm is defined for a function $h$ as 
\begin{equation}\label{eq:def-norm-mkkprimestar}
\Vert h \Vert_{M, k, k', \star}^2\coloneqq \E\left[ \frac{1}{M}\sum_{m=0}^{M-1} \left(\int_{\R} h(\xi_{m\Delta}^{[k']}, y)\mu_{m\Delta}^{[k]}(dy)\right)^2\right]. 
\end{equation}

Recall that the first step in the proof of Proposition \ref{prop:bayes} establishes the existence of a reference probability measure $\P_0$ and a Brownian motion $W^0$ under $\P_0$ such that the unique solution $\xi=(\xi_t)_{t\in[0,1]}$ of the McKean-Vlasov Equation~\eqref{eq:mkv} corresponds to the process $\widetilde{\xi}=(\widetilde{\xi}_t)_{t\in[0,1]}\coloneqq (x_0+W_t^0)_{t\in[0,1]}$ as described in Equation~\eqref{eq:ref-process}, and we have the following Radon–Nikodym derivative for every $t\in[0,1]$ and for every $k\in\mathcal{Y}$
\begin{eqnarray}\label{eq:dPk-derivative-}
\frac{\dd {\P}_k}{\dd\P_0}\Big|_{\cF_t}=\Phi_t^k&=&\exp\Big(\int_{0}^{t}b^*_k\big[ \widetilde{\xi}_s, \mu_s^{[k]}\big]\dd W_s^0-\frac{1}{2}\;\int_{0}^{t}\left(b^*_k\big[ \widetilde{\xi}_s, \mu_s^{[k]}\big]\right)^2\dd s\Big)\nonumber\\
&=&\exp\Big(\int_{0}^{t}b^*_k\big[ {\xi}_s, \mu_s^{[k]}\big]\dd W_s+\frac{1}{2}\;\int_{0}^{t}\left(b^*_k\big[ {\xi}_s, \mu_s^{[k]}\big]\right)^2\dd s\Big)
\end{eqnarray}
since $W_t^0=W_t+\int_{0}^{t}b^*_k\big[ {\xi}_s, \mu_s^{[k]}\big]\dd s$. Hence, the above equality \eqref{eq:dPk-derivative-} implies that  for every $k,k'\in\mathcal{Y}$ such that $k\neq k'$, and for every $t\in[0,1]$, 
\begin{align}
\frac{\dd {\P}_k}{\dd\P_{k'}}\Big|_{\cF_t}&=\exp\left(\int_0^t \left( b_k^*[{\xi}_s, \mu_s^{[k]}-b_{k'}^*[{\xi}_s, \mu_s^{[k']}]\right)\dd W_s +\frac{1}{2}\int_0^t \left[ \big(b_k^*[{\xi}_s, \mu_s^{[k]}] \big)^2- \big(b_{k'}^*[{\xi}_s, \mu_s^{[k']}]\big)^2\right]\dd s\right)\nonumber\\
&\leq \exp\left(\int_0^t \left( b_{k}^*[{\xi}_s, \mu_s^{[k]}]-b_{k'}^*[{\xi}_s, \mu_s^{[k']}]\right)\dd W_s + t b^*_{\max}\right)\eqqcolon C_{ b^*_{\max}} \exp\big(\mathcal{M}_t^{k,k'}\big),\nonumber
\end{align}
with $C_{b^*_{\max}}= \exp\big(b^*_{\max}\big)$ (and can change in the following) and 
\begin{equation}\label{eq:Mij}
\mathcal{M}_t^{k,k'}\coloneqq \int_0^t \left( b_k^*\left[{\xi}_s, \mu_s^{[k]}\right]-b_{k'}^*\left[{\xi}_s, \mu_s^{[k']}\right]\right)\dd W_s,\quad t\in[0,1]. 
\end{equation}
Consequently, we can get the following inequality
\begin{align}
&\Big\Vert \w{b}_k-{b}_k^* \Big\Vert_{M, k, k', \star}^2=\frac{1}{M}\sum_{m=0}^{M-1} \E_{k'}\left[ \left(\int_{\R} \left(\w{b}_k-{b}_k^*\right)(\xi_{m\Delta}, y)\mu_{m\Delta}^{[k]}(dy)\right)^2\right]\nonumber\\
&\quad \leq \frac{1}{M}\sum_{m=0}^{M-1} \E_{k}\left[ \left(\int_{\R} \left(\w{b}_k-{b}_k^*\right)(\xi_{m\Delta}, y)\mu_{m\Delta}^{[k]}(dy)\right)^2\frac{\dd\P_{k'}}{\dd \P_{k}}\Big|_{\cF_{m\Delta}}\right]\nonumber\\
&\quad \leq \frac{C_{b^*_{\max}}}{M} \sum_{m=0}^{M-1} \E_{k}\left[ \left(\int_{\R} \left(\w{b}_k-{b}_k^*\right)(\xi_{m\Delta}, y)\mu_{m\Delta}^{[k]}(dy)\right)^2\exp\big(\mathcal{M}_{m\Delta}^{k',k}\big)\right]\label{tentative}\\
&\quad \leq C_{ b^*_{\max}}\,\exp(a)\left\| \w{b}_k-{b}_k^* \right\|^2_{M, k,k,\star}+4C_{b^*_{\max}}\frac{1}{M}\sum_{m=0}^{M-1}\E_{k}\left[\exp\big(\mathcal{M}_{m\Delta}^{k',k}\big)\one_{\left\{\mathcal{M}_{m\Delta}^{k',k}>a\right\}}\right].\nonumber
\end{align}
For the second term in the above inequality, note that Lemma 2.1 in \cite{vandeGeer1995} implies that there exists a constant $c$ depending on $b_{\max}$ such that  $$\P(\mathcal{M}_{t}^{k,k'}>a)\leq \exp(-a^2/c)$$
and that 
\begin{equation}\label{eq:sup-m-ij-cochet}
    \sup_{t\in[0,1]}\langle \mathcal{M}^{k,k'}\rangle_t\leq 4b^{*2}_{\max},
\end{equation}
since the drift functions $b_k^*, \,k\in\cY$ are bounded by the constant $b^*_{\max}$. 
Hence, for every $t\in[0,1]$, we have 
\begin{align}
\E\left[ \exp\Big(\mathcal{M}_{t}^{k,k'}\Big)\one_{\big\{\mathcal{M}_{t}^{k,k'}>a\big\}}\right]&\leq \sqrt{\P\left(\mathcal{M}_{t}^{k,k'}>a\right)}\sqrt{\E\left[\exp\Big(2\mathcal{M}_{t}^{i,j}\Big)\right]}\nonumber\\
&\leq \exp\left(-\frac{a^2}{2c}\right)\sqrt{\E\left[\exp\Big(2\mathcal{M}_{t}^{k,k'}-2\langle \mathcal{M}^{k,k'}\rangle_t\Big)\exp\big(2\langle \mathcal{M}^{k,k'}\rangle_t\big)\right]}\nonumber\\
&\leq \exp\left(-\frac{a^2}{2c}+4b^*_{\max}\right)\sqrt{\E\left[\exp\Big(2\mathcal{M}_{t}^{k,k'}-2\langle \mathcal{M}^{k,k'}\rangle_t\Big)\right]}\nonumber\\
&\leq \exp\left(-\frac{a^2}{2c}+2b^*_{\max}\right)=C_{ b^*_{\max}}\exp\left(-\frac{a^2}{2c}\right),\nonumber
\end{align}
where the first inequality above follows from Cauchy-Schwarz inequality, the third inequality comes from \eqref{eq:sup-m-ij-cochet}, and the last inequality is derived from the fact that the process $\Big(\!\exp\big(2\mathcal{M}_{t}^{k,k'}-2\langle \mathcal{M}^{k,k'}\rangle_t\big)\Big)_{t\in[0,1]}$ is a martingale by applying \eqref{eq:sup-m-ij-cochet} and the Novikov Theorem (see e.g. \cite[Theorem 5.23]{legall2016brownian}). 
Consequently, by setting $a=\sqrt{2c\log(N)}$, we obtain

\begin{align}\label{eq:eqnormstar}
&\E \left[\Big\Vert \w{b}_k-{b}_k^* \Big\Vert_{M, k, k', \star}^2\right] \leq C_{ b^*_{\max}}\,\exp(a)\E \left[\left\Vert \w{b}_k-{b}_k^* \right\Vert^2_{M, k, k,\star}\right]+C_{b^*_{\max}}\exp\left(-\frac{a^2}{2c}\right)\nonumber\\
&\quad \leq C_{b^*_{\max}}\,\exp(\sqrt{2c\log(N)})\E \left[\left\Vert \w{b}_k-{b}_k^* \right\Vert^2_{M, k, k,\star}\right]+\frac{C_{b^*_{\max}}}{\sqrt{N}}.
\end{align}

Therefore, applying Lemma~\ref{lem:boundIntegrated} 
to $\w{b}_k-{b}_k^*$, we obtain 
\begin{align*}
\E \left[\Big\Vert \widehat{b}_k-{b}_k^* \Big\Vert_{M, k, k'}^2\right] & \leq C_{ b^*_{\max}}\,\exp(\sqrt{2c\log(N)})\E \left[\left\Vert \widehat{b}_k-{b}_k^* \right\Vert^2_{M, k, k}\right]+C_{ b^*_{\max}}\log(N)\frac{\exp(\sqrt{c\log(N)})}{N}\nonumber\\
&\qquad\quad +
\frac{C_{b^*_{\max}}}{\sqrt{N}}.
\end{align*}

\newpage
\bibliographystyle{ScandJStat}
\bibliography{biblioMKV}

\newpage

\appendix

\begin{center}
    \Large{\bf Appendix}
\end{center}
\section{Numerical results of drift estimation}

In this section, we evaluate the numerical performance of the drift estimator. For each class $k$, the quantity of interest is $\E\left[|\widehat b_k-b_k^*|_{M,k,k}^2\right]$ introduced in~\eqref{eq:normMkk}. We approximate this quantity using independent auxiliary particle systems. First, for each repetition $r\in\{1, 2, \dots, 50\}$, we compute a finite-particle auxiliary empirical MSE
\begin{equation}\label{eq:empirical_estimation_error-b_k}
\widehat{\mathcal E}_{k}^{(r)}
=
\frac{1}{N_{\mathrm{aux}} M}
\sum_{n=1}^{N_{\mathrm{aux}}}
\sum_{m=0}^{M-1}
\left[
\frac{1}{N_{\mathrm{aux}}}
\sum_{n'=1}^{N_{\mathrm{aux}}}
\left(
\widehat b_k^{(r)}-b_k^*
\right)
\left(
X_{m\Delta}^{[k],n,r,\mathrm{aux}},
\widetilde X_{m\Delta}^{[k],n',r,\mathrm{aux}}
\right)
\right]^2.
\end{equation}
Here, $X^{[k],n,r,\mathrm{aux}},{1\leq n\leq N_{\mathrm{aux}}}$ and $\widetilde X^{[k],n',r,\mathrm{aux}}, {1\leq n'\leq N_{\mathrm{aux}}}$ are two auxiliary interacting particle systems that are independent of the training data and independent of each other. In the experiments, we set $N_{\mathrm{aux}}=1000$ particles per class. The auxiliary systems are observed on the same time grid as the corresponding training experiment, with $M\in{10,100}$ and $\Delta=1/M$.
Then we report the average mean for each class $k\in\cY$, computed by $\bar{\mathcal{E}}_k=\frac{1}{50}\sum_{r=1}^{50}\widehat{\mathcal{E}}_{k}^{(r)}$ 
and its associated standard deviation. 

\paragraph{Drift estimation results for Scenario A}

For Scenario~A, the tensor-product B-spline basis is constructed on the rectangular domain
$[-6,6]\times[-6,6]$. We set the spline resolution parameter to $D=2$ and use B-splines of polynomial degree $2$. The choice of $D$ is further investigated in Section~\ref{subsubsec:selection_b_spline_dim}.

Tables~\ref{tab:estimation-error-2labels-regime1} and~\ref{tab:estimation-error-2labels-regime2} report the empirical drift MSE for the two classes under Regimes~A(i) and~A(ii), respectively.



\begin{table}[H]
\centering
\caption{Empirical drift MSE in Regime~A(i) ($\theta_0=1$).}
\label{tab:estimation-error-2labels-regime1}
\begin{tabular}{ccccccccc}
\hline 
$M_{\text{train}}$ & $N_{\text{train}}$ 
& \makecell{\vspace{-0.25cm}\\Mean\\accuracy}
& \makecell{\vspace{-0.25cm}\\Class 1\\accuracy}
 & \makecell{\vspace{-0.25cm}\\Class 2\\accuracy} \vspace{0.2cm} \\
\hline
10  & 100  & 0.116 & 0.085 (0.044) & 0.148 (0.085)\\
10  & 1000 & 0.042 & 0.023 (0.005) & 0.061 (0.011)\\
100 & 100  & 0.132 & 0.090 (0.052) & 0.174 (0.092)\\
100 & 1000 & 0.045 & 0.021 (0.005) & 0.068 (0.009)\\
\hline
\end{tabular}%
\end{table}



\begin{table}[H]
\centering
\caption{Empirical drift MSE in Regime~A(ii) ($\theta_0=0.5$).}
\label{tab:estimation-error-2labels-regime2}
\begin{tabular}{cccccccccc}
\hline 
$M_{\text{train}}$ & $N_{\text{train}}$ 
& \makecell{\vspace{-0.25cm}\\Mean\\accuracy}
& \makecell{\vspace{-0.25cm}\\Class 1\\accuracy}
 & \makecell{\vspace{-0.25cm}\\Class 2\\accuracy}  \vspace{0.2cm}\\
\hline
10  & 100  & 0.113 & 0.072 (0.044) & 0.153 (0.071)\\
10  & 1000 & 0.037 & 0.012 (0.004) & 0.062 (0.014)\\
100 & 100  & 0.114 & 0.077 (0.041) & 0.152 (0.090)\\
100 & 1000 & 0.040 & 0.011 (0.004) & 0.069 (0.009)\\
\hline
\end{tabular}%
\end{table}

The results show that increasing the number of training particles from $N_{\mathrm{train}}=100$ to $N_{\mathrm{train}}=1000$ reduces the drift estimation error in both regimes. By contrast, increasing the number of observation times from $M_{\mathrm{train}}=10$ to $M_{\mathrm{train}}=100$ does not systematically improve the estimation error in Scenario~A. This is consistent with the numerical classification results, for which the number of training particles has a more visible effect than the observation frequency.


\paragraph{Drift estimation results for Scenario B}

For Scenario~B, the tensor-product B-spline basis is constructed on the rectangular domain
$[-12,12]\times[-12,12]$. We set the spline resolution parameter to $D=20$ and again use B-splines of polynomial degree $2$. Table~\ref{tab:drift-error} reports the empirical drift MSE for the five classes.

\begin{table}[H]
\centering
\caption{Estimation error of the drift functions $b_k, 1\leq k\leq 5$ for different training configurations. 
}
\label{tab:drift-error}
\begin{tabular}{ccccccccc}
\hline 
$M_{\text{train}}$ & $N_{\text{train}}$ 
& \makecell{\vspace{-0.25cm}\\Mean\\ error}
& \makecell{\vspace{-0.25cm}\\Class 1\\ error}
 & \makecell{\vspace{-0.25cm}\\Class 2\\ error} & \makecell{\vspace{-0.25cm}\\Class 3\\ error}& \makecell{\vspace{-0.25cm}\\Class 4\\ error}  & \makecell{\vspace{-0.25cm}\\Class 5\\ error}\vspace{0.2cm} \\
\hline
10  & 100
& 1.874
& 2.942 (0.851)
& 1.210 (0.635)
& 1.292 (1.166)
& 1.033 (0.703)
& 2.895 (0.807)\\
10  & 1000
& 0.905
& 1.746 (0.327)
& 0.247 (0.095)
& 0.564 (0.762)
& 0.266 (0.095)
& 1.699 (0.215)\\
100 & 100
& 1.320
& 1.504 (0.881)
& 1.046 (0.709)
& 1.619 (1.459)
& 0.961 (0.583)
& 1.469 (0.776)\\
100 & 1000
& 0.159
& 0.195 (0.107)
& 0.097 (0.059)
& 0.227 (0.250)
& 0.091 (0.052)
& 0.183 (0.071)\\
\hline
\end{tabular}%
\end{table}

Similar to Scenario~A, increasing the number of training particles reduces the drift estimation error. For $N_{\mathrm{train}}=1000$, the mean error across the five classes decreases from $0.905$ for $M_{\mathrm{train}}=10$ to $0.159$ for $M_{\mathrm{train}}=100$ and the smallest drift estimation error is obtained for the largest training configuration $(M_{\mathrm{train}},N_{\mathrm{train}})=(100,1000)$.

\section{Selection of the B-spline basis dimension}\label{subsubsec:selection_b_spline_dim}

In this section, we study the choice of the B-spline dimension parameter $D$ for Scenario~A. Since the particles within a given system are dependent, we compare two ways of constructing a validation sample: either by holding out particles from the same system or by using particles from an independently simulated system.

We consider $D\in\{2,5,10,15,20\}$. For each class and each Monte Carlo repetition, particle systems of size $N_{\mathrm{total}}\in\{100,1000\}$ are simulated. As in the drift estimation procedure, independent systems are used for the path sample and for the empirical measure. In each system, $60\%$ of the particles are used for fitting and the remaining $40\%$ are reserved for validation.

In the within-system setting, the validation particles are taken from the same interacting systems as the fitting particles. They are therefore not independent of the fitting sample. In the independent-system setting, new particle systems of the same size $N_{\mathrm{total}}$ are simulated independently, and a validation sample of size $N_{\mathrm{val}}=40\%N_{\mathrm{total}}$ is drawn from these systems. 

For each candidate value of $D$, we compute the validation classification accuracy and the mean drift MSE over the two classes. In each repetition, $D$ is selected by maximizing the classification accuracy or minimizing the drift MSE, with ties broken in favor of the smaller value. The tables below report the corresponding selection frequencies over 50 repetitions.

\paragraph{Independent-system validation.} The selection frequencies based on classification accuracy are reported below.

\begin{table}[H]
\centering
\caption{Selection of the B-spline basis dimension under independent-system validation based on classification accuracy. }
\label{tab:bestD-classif-accuracy}
\begin{tabular}{ccccccccc}
\hline
Regime & $M$ & $N_{\mathrm{total}}$ 
& $D=2$ & $D=5$ & $D=10$ & $D=15$ & $D=20$ \\
\hline
A(i)	&10	&100	&\textbf{0.58}&	0.14	&0.16	&0.02&	0.10\\
A(i)&	10	&1000&	\textbf{0.30}&	0.24	&0.24&	0.10&	0.12\\
A(i)&	100&	100&	\textbf{0.44}&	0.34	&0.14&	0.04	&0.04\\
A(i)	&100&	1000&	\textbf{0.30}&	0.24&	0.18&	0.10&	0.18\\
A(ii)&	10	&100	&\textbf{0.44}&0.22&	0.16&	0.08&	0.10\\
A(ii)&	10&	1000&\textbf{0.36}&	0.20&	0.20&	0.18&	0.06\\
A(ii)&	100&	100&	\textbf{0.54}&	0.20&	0.14&	0.06&	0.06\\
A(ii)&	100&	1000&	0.28&	0.14&	\textbf{0.30}&	0.16&	0.12\\
\hline
\end{tabular}%
\end{table}

The corresponding selection frequencies based on the drift MSE are:

\begin{table}[H]
\centering
\caption{Selection of the B-spline basis dimension under independent-system validation based on drift estimation MSE.}
\label{tab:bestD-drift-mse}
\begin{tabular}{ccccccccc}
\hline
Regime & $M$ & $N_{\mathrm{total}}$ 
& $D=2$ & $D=5$ & $D=10$ & $D=15$ & $D=20$ \\
\hline
A(i)	&10	&100	&\textbf{0.96}&	0.04	&0.00&	0.00	&0.00 \\
A(i)	&10&	1000&	\textbf{0.96}&	0.02&	0.02&	0.00	&0.00 \\
A(i)	&100&	100	&\textbf{0.94}	&0.06&	0.00&	0.00&	0.00 \\
A(i)	&100&	1000&	\textbf{0.96}&	0.00&	0.04&	0.00&	0.00 \\
A(ii)&	10&	100&	\textbf{0.94}&	0.06	&0.00&	0.00&	0.00 \\
A(ii)	&10&	1000&\textbf{	0.98}&	0.00&	0.02&	0.00&	0.00 \\
A(ii)&	100	&100	&\textbf{0.98}&	0.02&	0.00&	0.00&	0.00 \\
A(ii)	&100	&1000&\textbf{	0.96}&	0.00&	0.04&	0.00&	0.00 \\
\hline
\end{tabular}%
\end{table}

\paragraph{Within-system validation.} For validation based on held-out particles from the same interacting systems, the classification-based selection frequencies are:

\begin{table}[H]
\centering
\caption{Selection of the B-spline basis dimension under within-system validation based on classification accuracy. }
\label{tab:bestD-classif-accuracy-within-sys}
\begin{tabular}{ccccccccc}
\hline
Regime & $M$ & $N_{\mathrm{total}}$ 
& $D=2$ & $D=5$ & $D=10$ & $D=15$ & $D=20$ \\
\hline
A(i)&	10&	100&	\textbf{0.44}&	0.26&	0.08&	0.18&	0.04\\
A(i)&	10&	1000&	\textbf{0.32}&	0.20&	0.22&	0.16&	0.10\\
A(i)&	100&	100&	\textbf{0.52}&	0.16&	0.14&	0.12&	0.06\\
A(i)&	100&	1000&	\textbf{0.30}&	0.22&	0.26&	0.12&	0.10\\
A(ii)&	10&	100&	\textbf{0.48}&	0.22&	0.16&	0.06&	0.08\\
A(ii)&	10&	1000&\textbf{	0.42}&	0.12&	0.28&	0.12&	0.06\\
A(ii)&	100&	100&	\textbf{0.50}&	0.20&	0.12&	0.12	&0.06\\
A(ii)&	100&	1000&\textbf{	0.42}	&0.10	&0.26&	0.10&	0.12\\
\hline
\end{tabular}%
\end{table}

The selection frequencies based on the drift MSE are:

\begin{table}[H]
\centering
\caption{Selection of the B-spline basis dimension under within-system validation based on drift estimation MSE. }
\label{tab:bestD-drift-MSE-within-sys}
\begin{tabular}{ccccccccc}
\hline
Regime & $M$ & $N_{\mathrm{total}}$ 
& $D=2$ & $D=5$ & $D=10$ & $D=15$ & $D=20$ \\
\hline
A(i)&	10	&100	&\textbf{0.98}&	0.02&	0.00&	0.00&	0.00\\
A(i)&	10	&1000&	\textbf{0.98}&	0.02	&0.00&	0.00&	0.00\\
A(i)&	100	&100	&\textbf{0.94}	&0.06&	0.00&	0.00&	0.00\\
A(i)&	100	&1000	&\textbf{0.92}&	0.00&	0.08&	0.00&	0.00\\
A(ii)&	10	&100	&\textbf{0.92}&	0.08&	0.00&	0.00&	0.00\\
A(ii)&	10	&1000	&\textbf{0.98}&	0.00&	0.02&	0.00&	0.00\\
A(ii)&	100&	100&	\textbf{0.98}&	0.02&	0.00&	0.00&	0.00\\
A(ii)&	100&	1000	&\textbf{1.00}&	0.00&	0.00&	0.00&	0.00\\
\hline
\end{tabular}%
\end{table}

The two validation strategies give similar results overall. $D=2$ is the most frequently selected value in nearly all cases. The only exception under independent-system validation occurs for Regime~A(ii) with $(M,N_{\mathrm{total}})=(100,1000)$, where $D=10$ is selected in $30\%$ of the repetitions and $D=2$ in $28\%$. The drift-MSE criterion provides a considerably more stable selection. Under both validation strategies, $D=2$ is selected in between $92\%$ and 100\% of the repetitions, while values $D\geq 15 $are never selected.

Overall, using held-out particles from the same interacting systems leads to conclusions close to those obtained with independent validation systems. This also supports the choice $D=2$ in the numerical experiments for Scenario~A.

\end{document}